\def\set@curr@file#1{%
  \begingroup
    \escapechar\m@ne
    \xdef\@curr@file{\expandafter\string\csname #1\endcsname}%
  \endgroup
}
\def\quote@name#1{"\quote@@name#1\@gobble""}
\def\quote@@name#1"{#1\quote@@name}
\def\unquote@name#1{\quote@@name#1\@gobble"}
\theoremstyle{plain}
\newtheorem{thm}{Theorem}
\newtheorem{lem}[thm]{Lemma}
\newtheorem{prop}[thm]{Proposition}
\newtheorem{maintheorem}{Theorem}
\theoremstyle{definition}
\theoremstyle{remark}
\newtheorem*{rem}{Remark}
\theoremstyle{plain}
\newtheorem{fact}{Fact}
\newcommand{\Rr}{\mathbb{R}}
\newcommand{\NN}{{\mathbb{N}}}
\newcommand{\RR}{\mathbb{R}}
\newcommand{\FF}{\mathcal{F}}
\newcommand{\etal}{\textit{et al}. }
\newcommand{\inter}{{\rm int}}
\newcommand{\RN}[1]{%
  \textup{\uppercase\expandafter{\romannumeral#1}}%
}
\author[Telmo Peixe]{Telmo Peixe}
\address{\small{ISEG-Lisbon School of Economics \& Management, Universidade de Lisboa, REM-Research in Economics and Mathematics, CEMAPRE-Centro de Matem\'atica Aplicada \`a Previs\~ao e Decis\~ao Econ\'omica.}}
\author[Alexandre A. Rodrigues]{Alexandre A. Rodrigues}
\address{\small{Centro de Matem\'atica and Faculdade de Ci\^encias, Universidade do Porto.}}
\email{telmop@iseg.ulisboa.pt, alexandre.rodrigues@fc.up.pt}
\begin{document}

\subjclass[2010]{37D45, 92D25, 91A22, 37G10, 65P20}
\keywords{Bifurcations, Polymatrix replicator, Shilnikov homoclinic cycle, Strange attractors, Observable chaos.}

\title[Persistent Strange attractors in 3D Polymatrix Replicators]
{Persistent strange attractors \\ in 3D Polymatrix Replicators}

\date{\today}

 \begin{abstract}
We introduce a one-parameter family of polymatrix replicators defined in a three-dimensional cube and study its bifurcations.
For a given interval of parameters, each element of the family can be $C^2$-approximated by a vector field whose flow exhibits suspended horseshoes and persistent strange attractors.
The proof relies on the  existence of a Shilnikov homoclinic cycle to the interior equilibrium.
We also describe the phenomenological steps responsible for the transition from regular to chaotic dynamics in our system (route to chaos).
 \end{abstract}

\maketitle

\section{Introduction}\label{sec:intro}

The \textit{polymatrix replicator}, introduced by Alishah, Duarte, and Peixe~\cite{alishah2015hamiltonian,alishah2015conservative}, is a system of ordinary differential equations developed to study the dynamics of the \textit{polymatrix game.}
It models the time evolution of the strategies that individuals from a stratified population choose to interact with each other.
These systems extend the class of \emph{bimatrix replicator} equations studied in~\cite{schuster1981coyness,schuster1981selfregulation,aguiar2011there}
to the study of the replicator dynamics in a population divided in a finite number of groups.

The polymatrix replicator induces a flow in a polytope defined by a finite product of simplices.
Alishah \etal~\cite{alishah2019asymptotic} presented a new method to study the asymptotic dynamics of flows defined on polytopes;
polymatrix replicators are a class examples of these flows.
Such dynamical systems arise naturally in the context of Evolutionary Game Theory (EGT) developed by Smith and Price~\cite{smith1973logic}.
We address the reader to Section 8 of Skyrms \cite{skyrms1992chaos} where a historical overview about  evolutionary game dynamics is given, including relations with the Lotka-Volterra  and the May-Leonard systems.

Smale \cite{smale1976differential} proved that strange attractors may be found in
ecological systems of $n\geq 5$ species in competition governed by Volterra equations.
Arneodo \etal \cite{arneodo1982strange} and Vano \etal \cite{vano2006chaos}, suggested  that chaos may be possible in Lotka-Volterra systems of $n=4$ species in competition.
Aiming a general setting where strange attractors may be observable, Arneodo \etal \cite{arneodo1980occurrence}  suggested the occurrence of chaos for $n=3$ species, not necessarily in competition.
This value of $n$ corresponds to the dimension of the phase space of the associated Lotka-Volterra system.
In all these references, the existence of chaos  has been achieved via  a homoclinic cycle to a saddle-focus.
In all cases, either the proof is mostly numerical or is the consequence of the existence of a Lorenz or a H\'enon attractor.

A strange attractor  is an invariant set with at least one positive Lyapunov exponent whose basin of attraction has non-empty interior.
Nowadays, at least for families of dissipative systems, chaotic dynamics is mostly understood as the persistence of strange attractors (occurring within a positive Lebesgue measure set of parameters) \cite{barrientos2019emergence}.
Persistence of dynamics is physically relevant because it means that the phenomenon is ``observable'' with positive probability.
The rigorous proof of the existence of a strange attractor is a great challenge. 

Finding explicit examples of three-dimensional vector fields  in the context of evolutionary games, whose flows exhibit chaos is of significant interest -- at this point, it is worth to see the system developed in ~\cite{accinelli2020power} to model social corruption.

In the present paper, by combining numerical and theoretical techniques, we construct a one-parameter family of polymatrix replicators containing  elements that can be $C^2$-approximated by vector fields exhibiting persistent strange attractors. This article is organised as follows. In Section \ref{sec:desc} we introduce the one-parameter family of polymatrix replicators that will be the focus of our work.
In Section \ref{terminology} we define the main concepts used throughout the article and state the main result.
Then in Section \ref{s: bifurcation analysis} we concentrate our analysis on  a parameter interval  where a single interior equilibrium exists. We enumerate all equilibria that appear on the  boundary of the phase space  and we study their Lyapunov stability.
Moreover, we numerically find the parameter values where the interior equilibrium undergoes local and global bifurcations.

We present in Section~\ref{s: numerical analysis} a numerical analysis which supports the description of the \textit{route to chaos} as well as the proof of the existence of strange attractors.
We compute  the Lyapunov exponents and characterise the maximal attracting    set as the parameter evolves.

In Section \ref{s:route}, we study the dynamics of the system in the interior of the phase space, stressing seven different topological scenarios: 
Cases I -- VII. We emphasize that the dynamics in the phase space's interior  is highly governed by the dynamics of the equilibria  on the faces.

We revive the arguments by Shilnikov and Ovsyannikov  \cite{shilnikov1965case, ovsyannikov1986systems}  and Mora and Viana \cite{mora1993abundance}  in Section  \ref{s: second part} to prove the existence of persistent strange attractors for a family of vector fields close to the  one-parameter family, concluding the proof of our main result.

Finally, in Section \ref{s: discussion} we relate our main results  with others in the literature, emphasising  the phenomenological scenario responsible for the emergence of strange attractors.

In order to facilitate the stability analysis, in Appendix \ref{tables_appendix} we exhibit tables with the explicit expression of the eigenvalues for the equilibria on the boundary, as well as their signs for different values of the parameter. 
In Appendices \ref{bd_appendix} and ~\ref{int_appendix}, we present a set of frames collecting the main   metamorphoses of the non-wandering set from a global attracting equilibrium to chaos.
We have endeavoured to make a self contained exposition bringing together all topics related to the proofs.

\section{Model description}\label{sec:desc}

Consider a population divided in three groups where individuals of each group have exactly two strategies to interact with other members of the population.
Based on~\cite{alishah2015hamiltonian,alishah2015conservative}, the model that we will consider to study the time evolution of the chosen strategies is the \emph{polymatrix game} and may be formalised as:
\begin{equation}\label{eq:poly_rep}
\dot{x}_i^\alpha(t) = x_i^\alpha (t) \left( (P x(t))_i^\alpha-\sum_{j=1}^2 (x_j^{\alpha}(t)) (P x(t))_j^\alpha \right), \alpha\in\{1,2,3\}, 
i\in\{1,2\},
\end{equation}
where $\dot{x}_i^\alpha(t)$ represents the time derivative of $x_i^\alpha(t)$, $P\in M_{6\times 6}(\Rr)$ is the payoff matrix,
$$
x(t)=\left(x_1^1(t),x_2^1(t),x_1^2(t),x_2^2(t),x_1^3(t),x_2^3(t)\right)
$$
and
$$
x_1^1(t)+x_2^1(t)=x_1^2(t)+x_2^2(t)=x_1^3(t)+x_2^3(t)=1.
$$
For simplicity of notation   will write $x$ instead of $x(t)$.
Since we are considering a population divided in three groups, each one with two possible strategies, the payoff matrix $P$ can be represented as a matrix,  
$$
P=\left(
\begin{array}{c|c|c}
P^{1,1}  & P^{1,2} & P^{1,3} \\
\hline\\[-4mm]
P^{2,1}  & P^{2,2} & P^{2,3} \\
\hline\\[-4mm]
P^{3,1}  & P^{3,2} & P^{3,3}
\end{array}
\right) = \left(
\begin{array}{cc|cc|cc}
p_{1,1}^{1,1}  & p_{1,2}^{1,1} & p_{1,1}^{1,2}  & p_{1,2}^{1,2} & p_{1,1}^{1,3}  & p_{1,2}^{1,3} \\[2mm]
p_{2,1}^{1,1}  & p_{2,2}^{1,1} & p_{2,1}^{1,2}  & p_{2,2}^{1,2} & p_{2,1}^{1,3}  & p_{2,2}^{1,3} \\[1mm]
\hline\\[-3mm]
p_{1,1}^{2,1}  & p_{1,2}^{2,1} & p_{1,1}^{2,2}  & p_{1,2}^{2,2} & p_{1,1}^{2,3}  & p_{1,2}^{2,3} \\[2mm]
p_{2,1}^{2,1}  & p_{2,2}^{2,1} & p_{2,1}^{2,2}  & p_{2,2}^{2,2} & p_{2,1}^{2,3}  & p_{2,2}^{2,3} \\[1mm]
\hline\\[-3mm]
p_{1,1}^{3,1}  & p_{1,2}^{3,1} & p_{1,1}^{3,2}  & p_{1,2}^{3,2} & p_{1,1}^{3,3}  & p_{1,2}^{3,3} \\[2mm]
p_{2,1}^{3,1}  & p_{2,2}^{3,1} & p_{2,1}^{3,2}  & p_{2,2}^{3,2} & p_{2,1}^{3,3}  & p_{2,2}^{3,3}
\end{array}
\right)\,,
$$
where each block $P^{\alpha, \beta}$, $\alpha, \beta \in \{1,2,3\}$, represents the payoff of the individuals of the group $\alpha$ when interacting with individuals of the group $\beta$, and where each entry $p_{i,j}^{\alpha, \beta}$ represents the average payoff of an individual of the group $\alpha$ using strategy $i$ when interacting with an individual of the group $\beta$ using strategy $j$.

In this setting, we can interpret equation~\eqref{eq:poly_rep} in the following way:
assuming random encounters between individuals of the population, for each group $\alpha\in\{1,2,3\}$,
the average payoff for a strategy $i\in\{1,2\}$, is given by
$$
(Px)_i^\alpha=\sum_{\beta=1}^3 \left( P^{\alpha,\beta} \right)_i^\alpha x^\beta
= \sum_{\beta=1}^3 \sum_{k=1}^2 p_{i,k}^{\alpha,\beta}x_k^\beta\,,
$$
the average payoff of all strategies in $\alpha$ is given by
$$
\sum_{i=1}^2 x_i^{\alpha} \left( Px \right)_i^\alpha = \sum_{\beta=1}^3 (x^{\alpha})^T P^{\alpha,\beta} x^{\beta}\,,
$$
and the growth rate $ \dfrac{\dot{x}_i^{\alpha}}{x_i^{\alpha}}$ of the frequency of each strategy $i\in\{1,2\}$
is equal
to the payoff difference
$$ (Px)_i^\alpha - \sum_{\beta=1}^3 (x^{\alpha})^T P^{\alpha,\beta} x^{\beta}. $$
For simplicity of notation, we consider $x=(x_1,x_2,x_3,x_4,x_5,x_6)$, where 
\begin{equation}
\label{sum1}
x_1+x_2=x_3+x_4=x_5+x_6=1.
\end{equation}
Then, system~\eqref{eq:poly_rep} may be written as
\begin{equation}\label{eq:poly_rep_2}
\left \{ \begin{array}{l}
\dot{x}_i=x_i\left((P x)_i-x_i(P x)_i-x_{i+1}(P x)_{i+1}\right) \\[2mm]
\dot{x}_{i+1}=x_{i+1}\left((P x)_{i+1}-x_i(P x)_i-x_{i+1}(Px)_{i+1}\right)
\end{array} \right. , \quad i\in\{1,3,5\}.
\end{equation}

\begin{lem}
System~\eqref{eq:poly_rep_2} is equivalent to
\begin{equation} \label{eq:poly_rep_3}
\left \{ \begin{array}{l}
\dot{x}_1=x_1(1-x_1)\left((P x)_1-(Px)_2\right) \\[2mm]
\dot{x}_3=x_3(1-x_3)\left((P x)_3-(Px)_4\right) \\[2mm]
\dot{x}_5=x_5(1-x_5)\left((P x)_5-(Px)_6\right) 
\end{array} \right. ,
\end{equation}
where $\dot{x}_2=-\dot{x}_1$, $\dot{x}_4=-\dot{x}_3$, and $\dot{x}_6=-\dot{x}_5$.
\end{lem}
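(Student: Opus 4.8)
The plan is to verify the equivalence by a direct substitution that exploits the simplex constraints~\eqref{sum1}. Since the three pairs $(x_1,x_2)$, $(x_3,x_4)$, $(x_5,x_6)$ enter~\eqref{eq:poly_rep_2} in exactly the same way, it suffices to carry out the computation for the representative index $i=1$; the cases $i=3$ and $i=5$ then follow verbatim after relabelling.

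First I would use $x_1+x_2=1$, i.e.\ $x_2=1-x_1$, to rewrite the first line of~\eqref{eq:poly_rep_2}. Substituting and collecting the coefficient of $(Px)_1$ gives
\begin{align*}
\dot{x}_1
&= x_1\bigl((Px)_1-x_1(Px)_1-(1-x_1)(Px)_2\bigr)\\
&= x_1\bigl((1-x_1)(Px)_1-(1-x_1)(Px)_2\bigr)
= x_1(1-x_1)\bigl((Px)_1-(Px)_2\bigr),
\end{align*}
which is precisely the first line of~\eqref{eq:poly_rep_3}. The crucial point is that the within-group average payoff subtracted in~\eqref{eq:poly_rep_2}, namely $x_1(Px)_1+x_2(Px)_2$, combines with $x_2=1-x_1$ to expose the common factor $x_1(1-x_1)$.

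Next I would check that the second line of~\eqref{eq:poly_rep_2} is not an independent equation but merely reproduces the derivative of the constraint. Substituting $x_2=1-x_1$ into the $\dot{x}_2$ equation and factoring in the same manner yields $\dot{x}_2 = x_1(1-x_1)\bigl((Px)_2-(Px)_1\bigr) = -\dot{x}_1$, in agreement with differentiating $x_1+x_2=1$. Repeating these two steps for the pairs indexed by $i=3$ and $i=5$ produces the remaining two lines of~\eqref{eq:poly_rep_3} together with $\dot{x}_4=-\dot{x}_3$ and $\dot{x}_6=-\dot{x}_5$.

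There is no genuine obstacle here: the argument is pure algebraic bookkeeping, and the only thing to watch is the correct isolation of the factor $x_i(1-x_i)$ and the recognition that the $\dot{x}_{i+1}$ equation is redundant once the constraint is imposed. Once this is done, both systems coincide on the invariant region cut out by~\eqref{sum1}, so their flows agree there, which is the asserted equivalence.
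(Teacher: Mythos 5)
Your computation is correct and is essentially the paper's own proof: both substitute $x_{i+1}=1-x_i$ from~\eqref{sum1} into the $\dot{x}_i$ equation of~\eqref{eq:poly_rep_2} and factor out $x_i(1-x_i)$, treating the three index pairs identically. Your additional explicit check that the $\dot{x}_{i+1}$ equation collapses to $-\dot{x}_i$ is a minor elaboration of what the paper simply asserts.
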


\begin{proof}
Let $ i\in\{1,3,5\}$. Since $x_i+x_{i+1}=1$, from~\eqref{eq:poly_rep_2} we deduce that
\begin{align*}
\dot{x}_i &= x_i\left((P x)_i-x_i(P x)_i-x_{i+1}(Px)_{i+1}\right) \\
 		&= x_i\left((1-x_i)(P x)_i-(1-x_i)(Px)_{i+1}\right) \\
 		&= x_i(1-x_i)\left((P x)_i-(Px)_{i+1}\right) ,
\end{align*}
and $$\dot{x}_{i+1}=-\dot{x}_i.$$
\end{proof}

\begin{center}
\footnotesize{
\begin{tabular}{|c|c|c|} \toprule
Vertex  									&  $\Rr^3$ 					 	&  $\Rr^6$  		\\ \bottomrule \toprule
$\quad v_1\quad$ 	& $\quad (0,0,0)\quad$ 	& $\quad (1,0,1,0,1,0)\quad$	\\ \midrule
$v_2$ 					& $(0,0,1)$ 						& $(1,0,1,0,0,1)$  				\\ \midrule
$v_3$ 					& $(0,1,0)$ 						& $(1,0,0,1,1,0)$  				\\ \midrule
$v_4$ 					& $(0,1,1)$ 						& $(1,0,0,1,0,1)$  				\\ \midrule
$v_5$ 					& $(1,0,0)$ 						& $(0,1,1,0,1,0)$  				\\ \midrule
$v_6$ 					& $(1,0,1)$ 						& $(0,1,1,0,0,1)$  				\\ \midrule
$v_7$ 					& $(1,1,0)$ 						& $(0,1,0,1,1,0)$  				\\ \midrule
$v_8$ 					& $(1,1,1)$ 						& $(0,1,0,1,0,1)$  				\\ \bottomrule
\end{tabular}
\quad 
\begin{tabular}{|c|c|} \toprule
Face									& Vertices					 	   	\\ \bottomrule \toprule
$\quad  \sigma_1 \quad$ 	& $\{ v_5, v_6, v_7, v_8 \}$	\\ \midrule
$\quad  \sigma_2 \quad$ 	& $\{ v_1, v_2, v_3, v_5 \}$	\\ \midrule
$\quad  \sigma_3 \quad$ 	& $\{ v_3, v_4, v_7, v_8 \}$	\\ \midrule
$\quad  \sigma_4 \quad$ 	& $\{ v_1, v_2, v_5, v_6 \}$	\\ \midrule
$\quad  \sigma_5 \quad$  	& $\{ v_2, v_4, v_6, v_8 \}$	\\ \midrule
$\quad  \sigma_6\quad$ 	& $\{ v_1, v_3, v_5, v_7 \}$ \\ \bottomrule
\end{tabular}
        \captionof{table}{\small{Representation of the eight vertices of $[0,1]^3$ in $\Rr^3$ and  $\Gamma_{(2,2,2)}$ in $\Rr^6$, and the identification of the six faces according to vertices they contain.}}
        \label{tbl:Representation_of_vs}
}
\end{center}

  The phase space of system \eqref{eq:poly_rep_3} is the prism  $\Gamma_{(2,2,2)} :=\Delta^1\times \Delta^1 \times \Delta^1 \subset\Rr^6, $ where
$$
\Delta^1 = \{ (x_i,x_{i+1})\in\Rr^2 \,|\, x_i+x_{i+1}=1,\, x_i,x_{i+1}\geq 0 \}, \qquad  i\in\{1,3,5\}.
$$

Fixing a referential on $\Rr^3$, by~\eqref{sum1} we can define a bijection between $ \Gamma_{(2,2,2)} \subset\Rr^6$ and $[0,1]^3\subset \Rr^3$.
We  identify $(1,0,1,0,1,0)\in\Gamma_{(2,2,2)}$ with $(0,0,0)\in [0,1]^3$.
In Table~\ref{tbl:Representation_of_vs} (left) we identify each vertex of the cube $[0,1]^3$ with a vertex on the prism $\Gamma_{(2,2,2)}$.

Given the polymatrix replicator~\eqref{eq:poly_rep}, by~\cite[Proposition 1]{alishah2015conservative}, we may obtain an equivalent game (in the sense that the corresponding vector fields are the same) with a payoff matrix whose second row of each group has $0$'s in all of its entries.
From now on, we will analyse system~\eqref{eq:poly_rep_3} with payoff matrix
$$
P_{\boldsymbol{\mu}}=\left(
\begin{array}{cccccc}
 \boldsymbol{\mu}  & 14 & -10 & 10 & -2 & 2 \\
 0 & 0 & 0 & 0 & 0 & 0 \\
 10 & -10 & 2 & -2 & -2 & 2 \\
 0 & 0 & 0 & 0 & 0 & 0 \\
 -25 & 29 & 0 & -11 & -2 & 2 \\
 0 & 0 & 0 & 0 & 0 & 0 \\
\end{array}
\right)\,.
$$
This defines a polynomial vector field $f_{\boldsymbol{\mu}}$ on the compact set $\Gamma_{(2,2,2)}\equiv[0,1]^3$
whose flow is denoted by $\phi_P^t$,   for ${\boldsymbol{\mu}}\in \RR$.

\begin{rem}
The finding of an explicit expression for $P_{\boldsymbol{\mu}}$ has been motivated by the work of Arneodo~\etal~\cite{arneodo1980occurrence}, and its  finding has been possible due to the numerical experience of the first author in previous works~\cite{alishah2015conservative, peixe2015lotka, peixe2019permanence}. 
\end{rem}

\begin{lem}
\label{Lemma3}
The prism $\Gamma_{(2,2,2)}$ is flow-invariant for system~\eqref{eq:poly_rep_3}.
\end{lem}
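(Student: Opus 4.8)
The plan is to show that $\Gamma_{(2,2,2)}$, equivalently the cube $[0,1]^3$, cannot be left by any trajectory, by treating separately the two kinds of constraints that cut it out of $\Rr^6$: the three affine equalities in~\eqref{sum1}, and the six inequalities $0\le x_i\le 1$ (for $i\in\{1,3,5\}$) whose boundary cases fix the faces of the cube. First I would handle the equalities. By the previous lemma we have $\dot{x}_2=-\dot{x}_1$, $\dot{x}_4=-\dot{x}_3$ and $\dot{x}_6=-\dot{x}_5$, so that $\frac{d}{dt}(x_1+x_2)=\frac{d}{dt}(x_3+x_4)=\frac{d}{dt}(x_5+x_6)=0$ along any solution. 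Hence every trajectory issued from a point satisfying~\eqref{sum1} remains on the affine subspace $\{x_1+x_2=x_3+x_4=x_5+x_6=1\}$ for all time, which already confines the dynamics to the $3$-plane carrying the cube.

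The key observation for the inequalities is the factorisation in~\eqref{eq:poly_rep_3}: each equation has the shape $\dot{x}_i=x_i(1-x_i)\,g_i(x)$ with $g_i(x)=(Px)_i-(Px)_{i+1}$ affine, for $i\in\{1,3,5\}$. Consequently the component $\dot{x}_i$ vanishes identically on each of the hyperplanes $\{x_i=0\}$ and $\{x_i=1\}$, so the six faces of the cube are invariant for $f_{\boldsymbol{\mu}}$. Since $f_{\boldsymbol{\mu}}$ is polynomial, hence locally Lipschitz, solutions are unique; a trajectory starting in the interior $(0,1)^3$ can therefore never reach a face, because touching it would force the trajectory to coincide with the in-face solution and violate uniqueness. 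Combining this with the preservation of~\eqref{sum1}, any solution starting in $\Gamma_{(2,2,2)}$ stays in $\Gamma_{(2,2,2)}$, and by continuity of the flow the closed prism is flow-invariant.

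The argument is essentially self-contained, so I do not anticipate a genuine obstacle; the only point requiring care is the appeal to uniqueness to rule out that a trajectory meets a face tangentially and then escapes. This is exactly where the locally Lipschitz (indeed $C^\infty$) nature of the polynomial field $f_{\boldsymbol{\mu}}$ enters, together with the fact that each face is itself an invariant set rather than merely tangent to the vector field, which is guaranteed by the factor $x_i(1-x_i)$ vanishing on it.
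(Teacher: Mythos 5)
Your proof is correct and rests on the same key observation as the paper's: the factor $x_i(1-x_i)$ in~\eqref{eq:poly_rep_3} makes $\dot{x}_i$ vanish on $\{x_i=0\}$ and $\{x_i=1\}$, so the faces are invariant. You simply spell out the details the paper leaves implicit (preservation of the constraints~\eqref{sum1} and the uniqueness argument preventing interior trajectories from reaching a face), which is a welcome but not substantively different elaboration.
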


\begin{proof}
Concerning system~\eqref{eq:poly_rep_3}, notice that, for each $i\in\{1,3,5\}$, if $x_i\in\{0,1\}$ then $\dot{x}_i=0$ (i.e. initial conditions starting at the faces, stay there for all $t\in \Rr$).
\end{proof}

By compactness of $\Gamma_{(2,2,2)}$, the flow associated to system~\eqref{eq:poly_rep_3} is complete, i.e. all solutions are defined for all $t\in \RR$.
From now on, let $\left( (2,2,2), P_{\boldsymbol{\mu}} \right)$ be the polymatrix game associated to~\eqref{eq:poly_rep_3}. For $P_{\boldsymbol{\mu}}$, system~\eqref{eq:poly_rep_3} becomes

\begin{equation}\label{eq:poly_rep_5}
\left \{ \begin{array}{l}
\dot{x_1}=x_1(1-x_1)(P_{\boldsymbol{\mu}}\, x)_1 \\[2mm]
\dot{x_3}=x_3(1-x_3)(P_{\boldsymbol{\mu}}\, x)_3 \\[2mm]
\dot{x_5}=x_5(1-x_5)(P_{\boldsymbol{\mu}}\, x)_5 
\end{array} \right. .
\end{equation}

Using~\eqref{sum1} and considering $x=x_2$, $y=x_4$, $z=x_6$, the equation \eqref{eq:poly_rep_5} is equivalent to
\begin{equation}\label{eq:poly_rep_8}
\left \{ \begin{array}{l}
\dot{x}=x(1-x)\left(12- \boldsymbol{\mu}+(\boldsymbol{\mu}-14) x-20 y - 4 z \right) \\[2mm]
\dot{y}=y(1-y)\left( -10 +20 x + 4 y - 4 z \right) \\[2mm]
\dot{z}=z(1-z)\left( 27 - 54 x + 11 y - 4 z \right)
\end{array} \right. .
\end{equation}

The one-parameter polynomial vector field associated to \eqref{eq:poly_rep_8} will be denoted by $f_{\boldsymbol{\mu}}$. Let us denote by $\mathcal{X}$ the set of $C^2$-vector fields on $\Rr^3$ that leave the cube $[0,1]^3$ invariant.
A similar model has been used by Accinelli \etal \cite{accinelli2020power} to study the power of voting and corruption cycles in democratic societies.

\begin{rem}
In the transition from \eqref{eq:poly_rep_5} to \eqref{eq:poly_rep_8},
we have identified the point $(1,0,1,0,1,0)\in\Gamma_{(2,2,2)}$ with   $(0,0,0)\in \Rr^3$. 
\end{rem}


\section{Terminology and main result}\label{terminology}
In this section we define the main concepts used throughout the article and  we state the main result.
For $I\subset \Rr$ and $n\in\NN$, we consider  a smooth one-parameter family of vector fields $(f_{\mu})_{\mu\in I}$ on $\RR^{n}$,
with flow given by the unique solution  $u(t)=\phi^t(u_{0})$  of
\begin{equation}
\dot{u}=f_\mu(u),\qquad \phi^0(u_0)=u_{0}, 
\label{sistema geral}
\end{equation}
where $u_{0} \in \RR^{n},$ $t\in \RR$, and $\mu\in I$.
If $A\subseteq \RR^n$, we denote by $\inter\left(A\right)$ and $\overline{A}$
the topological interior and the closure of $A$, respectively.

For a solution of~\eqref{sistema geral} passing through $u_0\in \RR^n$, the set of its accumulation points as $t$ goes to $+\infty$ is the $\omega$-limit set of $u_0$ and will be denoted by $\omega(u_0)$. More formally, 
$$
\omega(u_0)=\bigcap_{T=0}^{+\infty} \overline{\left(\bigcup_{t>T}\phi^t (u_0)\right)}.
$$ 
It is well known that $\omega(u_0)$  is closed and flow-invariant, and if the $\phi$-trajectory of $u_0$ is contained in a compact set, then 
$\omega(u_0)$ is non-empty \cite{guckenheimer2013nonlinear}.

For the sake of completeness, we describe  the main features of the local codimension-one bifurcations studied in this paper.
We say that an equilibrium $O_\mu$ of \eqref{sistema geral} undergoes:
\begin{enumerate}
\item  a \emph{transcritical bifurcation} if $Df_\mu(O_\mu)$ has an eigenvalue whose real part passes through zero and interchanges its stability with another equilibrium as the parameter varies;
\item a \emph{Belyakov transition} if it changes from a node to a focus or vice-versa, i.e. if there is at least a pair of  eigenvalues of $Df_\mu(O_\mu)$ changing from real to complex (conjugate), as long as the sign of the real part is the same;
\item a \emph{supercritical Hopf bifurcation}  if it changes from an attracting focus to an unstable one and generates an attracting periodic solution.
\end{enumerate}

We say that a non-trivial periodic solution of~\eqref{sistema geral} undergoes a \emph{period-doubling bifurcation}
when a small perturbation of the system produces a new periodic solution, doubling the period of the original one.
The linear and nonlinear conditions which guarantee the existence of such bifurcations may be found in  \cite{guckenheimer2013nonlinear}.

For $m\in\NN$, given two hyperbolic saddles
$A$ and $B$ associated to the flow of~\eqref{sistema geral}, an $m$-dimensional \emph{heteroclinic connection } from $A$ to $B$, denoted by $[A\rightarrow B]$, is a  connected and flow-invariant $m$-dimensional manifold contained in $W^{u}(A)\cap W^{s}(B)$. There may be more than one connection from  $A$ to $B$ (see Field~\cite{field2020lectures}). 

Let $\mathcal{S=}\{A_{j}:j\in \{1,\ldots,k\}\}$ be a finite ordered set of hyperbolic equilibria.
We say that there is a {\em heteroclinic cycle }associated to $\mathcal{S}$ if 
\begin{equation*}
\forall j\in \{1,\ldots,k\},W^{u}(A_{j})\cap W^{s}(A_{j+1})\neq
\emptyset \pmod k .
\end{equation*}
If $k=1$, the cycle is called \emph{homoclinic}. In other words, there is a connection whose trajectories tend to $A_1$ in both backward and forward times.

 A \emph{Lyapunov exponent} (LE) associated to a solution of \eqref{sistema geral} is an average exponential rate of divergence or convergence of nearby trajectories in the phase space.
Based on \cite{sandri1996numerical},
to estimate these exponents we consider two nearby points $u_0$ and $u_0+v$ in the phase space, where $v$ is a (small) vector.
Denoting by $\|\cdot\|$ the euclidean norm in $\Rr^n$, the number 
$$
LE(u_0,v)=\lim_{t\rightarrow +\infty}\frac{1}{t}\, \log \,  \| D_{u_0} \phi^t(u_0).v \|,
$$ 
designated as the Lyapunov exponent of $u_0$ in the direction $v$, exists and is finite for almost all points in $\Rr^n$ (\cite{oseledec1968multiplicative}).
For $u_0\in\Rr^n$, if $LE(u_0,v)>0$ for some direction $v$, then one has exponential divergence of nearby orbits.
In this case, we say that there exists an orbit with a \emph{positive Lyapunov exponent}.

Following~\cite{homburg2002periodic}, a  (H\'enon-type) \emph{strange attractor} of a two-dimensional dissipative diffeomorphism $R$ defined in a compact and Riemannian manifold, is a compact invariant set $\Lambda$ with the following properties:
\begin{itemize}
\item $\Lambda$  equals the topological closure of the unstable manifold of a hyperbolic periodic point;
\item the basin of attraction of $\Lambda$   contains an open set ($\Rightarrow$ has positive Lebesgue measure);
\item there is a dense orbit in $\Lambda$ with a positive Lyapunov exponent (i.e. there is exponential growth of the derivative along its orbit). 
\end{itemize}
A vector field possesses a strange attractor if the first return map to a cross section does.

Let $\Psi$ be a property of a dynamical system and $I\subseteq\Rr$ a non degenerate interval.
We say that a one-parameter family $(f_{\boldsymbol{\mu}})_{\boldsymbol{\mu}\in I}$  exhibits \emph{persistently} the property
$\Psi$ if it is observed for $f_{\boldsymbol{\mu}}$ over a set of
parameter values $\boldsymbol{\mu}$ with positive Lebesgue measure.  
The novelty of this article is the following result, where the neighbourhood is defined in the  $C^2$-Whitney topology.

\begin{maintheorem}\label{thrm:main}
For the  family $(f_{\boldsymbol{\mu}})_{\boldsymbol{\mu}\in \Rr}$,
there exists $I\subseteq\Rr$ such that, for all 
neighbourhood $\mathcal{U}\subset \mathcal{X}$ of $f_{\tilde{\boldsymbol{\mu}}}$, $\tilde{\boldsymbol{\mu}}\in I$,  there exists a generic family $\left( g_{ \boldsymbol{\lambda}}\right)_{\boldsymbol{\lambda}\in [-1,1]} \in\mathcal{U}$  exhibiting persistently strange attractors.
\end{maintheorem}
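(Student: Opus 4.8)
The plan is to realize the strange attractors as a consequence of a Shilnikov homoclinic cycle to the interior equilibrium $O_{\boldsymbol{\mu}}$, following the classical mechanism of Shilnikov, Ovsyannikov, and Mora--Viana. First I would locate the relevant parameter interval $I$ where the system \eqref{eq:poly_rep_8} possesses a single interior equilibrium whose linearization $Df_{\boldsymbol{\mu}}(O_{\boldsymbol{\mu}})$ has one real eigenvalue $\rho>0$ and a complex-conjugate pair $-\delta\pm i\omega$ with $\delta>0$ (a saddle-focus of type (2,1)). The Belyakov transition and Hopf bifurcation identified numerically in Section \ref{s: bifurcation analysis} give the mechanism by which such an eigenvalue configuration arises as $\boldsymbol{\mu}$ varies, and the sign conditions on the eigenvalues must be verified on $I$ so that the saddle quantity $\delta-\rho<0$ (equivalently $\rho/\delta<1$), which is the Shilnikov condition guaranteeing chaotic behaviour near the homoclinic loop.

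Next I would establish the existence of a homoclinic connection $[O_{\tilde{\boldsymbol{\mu}}}\to O_{\tilde{\boldsymbol{\mu}}}]$ for some $\tilde{\boldsymbol{\mu}}\in I$, i.e. a trajectory biasymptotic to the saddle-focus in both forward and backward time, where $W^u(O)$ (one-dimensional) falls back into $W^s(O)$ (two-dimensional). This is the global bifurcation detected numerically in Section \ref{s: numerical analysis}; since the connection is nongeneric of codimension one, it occurs at isolated parameter values, and the numerics must pin down one such $\tilde{\boldsymbol{\mu}}$ together with the transversality of the unfolding as $\boldsymbol{\mu}$ crosses $\tilde{\boldsymbol{\mu}}$. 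With the Shilnikov loop in hand, the classical Shilnikov--Ovsyannikov theory yields, in every neighbourhood of the loop, countably many suspended horseshoes and hence transverse homoclinic tangles in the first return map to a cross section transverse to the loop.

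Then I would invoke the Mora--Viana \cite{mora1993abundance} machinery to upgrade the horseshoes into \emph{persistent} strange attractors. The idea is that unfolding the Shilnikov homoclinic cycle produces, in the first return map, families of quadratic homoclinic tangencies; Mora--Viana's theorem states that generic one-parameter families of surface diffeomorphisms unfolding such a tangency exhibit Hénon-type strange attractors for a positive Lebesgue measure set of parameters. This is precisely where the $C^2$-approximation in the statement enters: the return map of $f_{\tilde{\boldsymbol{\mu}}}$ need not itself be generic, but an arbitrarily $C^2$-close vector field $g_{\boldsymbol{0}}\in\mathcal{U}$ can be chosen so that its return map has a nondegenerate tangency, and embedding this into a generic unfolding $(g_{\boldsymbol{\lambda}})_{\boldsymbol{\lambda}\in[-1,1]}$ produces the required family exhibiting strange attractors over a positive-measure set of $\boldsymbol{\lambda}$. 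The reduction ``a vector field has a strange attractor if its first return map does'' (stated in Section \ref{terminology}) then transfers the planar result back to the flow.

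The main obstacle, I expect, is twofold and essentially global rather than local. The delicate analytic point is verifying the Shilnikov eigenvalue inequality and, crucially, the \emph{existence} of the homoclinic connection: the latter is a genuinely global property of the polynomial flow that cannot be read off the linearization and must be certified numerically with control on the transversal unfolding, so bridging the rigorous local theory and the numerically-detected global connection is the crux. The second difficulty is the genericity bookkeeping needed for Mora--Viana: one must check that the tangency created upon unfolding is quadratic and that the family crosses it with nonzero speed, conditions which are $C^2$-generic and therefore survive an arbitrarily small perturbation inside $\mathcal{U}$, but which require care to state so that the approximating family $(g_{\boldsymbol{\lambda}})$ simultaneously lies in $\mathcal{U}$, preserves invariance of the cube (so that $g_{\boldsymbol{\lambda}}\in\mathcal{X}$), and satisfies all the hypotheses of \cite{mora1993abundance}.
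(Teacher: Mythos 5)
Your overall architecture coincides with the paper's: a Shilnikov homoclinic cycle to the interior saddle-focus, a generic one-parameter unfolding whose return maps develop dense quadratic homoclinic tangencies at dissipative periodic points, and Mora--Viana to extract a positive Lebesgue measure set of parameters carrying H\'enon-type attractors. Two steps, however, need repair.

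First, the step ``establish the existence of a homoclinic connection $[\mathcal{O}_{\tilde{\boldsymbol{\mu}}}\to\mathcal{O}_{\tilde{\boldsymbol{\mu}}}]$ for some $\tilde{\boldsymbol{\mu}}\in I$ \dots\ the numerics must pin down one such $\tilde{\boldsymbol{\mu}}$'' fails as stated: the continuation computations reported in Subsection~\ref{ss: open} indicate that the invariant manifolds of $\mathcal{O}_{\boldsymbol{\mu}}$ come close but do \emph{not} touch within the family $(f_{\boldsymbol{\mu}})_{\boldsymbol{\mu}}$, and the existence of the Shilnikov orbit in the family itself is explicitly left as an open problem. The route actually taken (Fact~\ref{hom2}, justified heuristically in Subsection~\ref{ss: digestive} via the positive Lyapunov exponent, Katok's theorem and the absence of other nontrivial compact invariant sets in the interior) is to place the homoclinic cycle in a vector field $g$ that is only $C^2$-close to $f_{\tilde{\boldsymbol{\mu}}}$. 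Since the theorem only asks for a family inside an arbitrary neighbourhood $\mathcal{U}$ of $f_{\tilde{\boldsymbol{\mu}}}$, this weaker input suffices, and the $C^2$-approximation must be spent here --- on creating the homoclinic connection --- not merely on regularizing the tangency afterwards.

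Second, your eigenvalue bookkeeping is off. For $\boldsymbol{\mu}$ in the relevant range the interior equilibrium has eigenvalues $\lambda_{\textrm{u}}\pm i\omega$ and $-\lambda_{\textrm{s}}$ with $\lambda_{\textrm{u}},\lambda_{\textrm{s}}>0$ (Fact~\ref{fact_g}), i.e.\ a two-dimensional unstable and one-dimensional stable manifold --- the opposite of your type-$(2,1)$ configuration --- and the proof reverses time to fit the hypotheses of the homoclinic theorem it quotes. More importantly, your condition ``$\delta-\rho<0$ (equivalently $\rho/\delta<1$)'' is internally contradictory, and it conflates the Shilnikov expansion condition with the dissipativity condition $2\lambda_{\textrm{u}}<\lambda_{\textrm{s}}$ that is actually needed: the latter is what makes the periodic points of the suspended horseshoes dissipative, and without it the unfolding of the tangencies via Mora--Viana would not yield attractors. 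Once these two items are corrected, your plan reproduces the paper's proof.
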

  
In Figure \ref{fig:scheme1} we provide a scheme with the main idea of Theorem \ref{thrm:main} in the space of vector fields under consideration.

\begin{figure}[h]
	\includegraphics[width=6cm]{./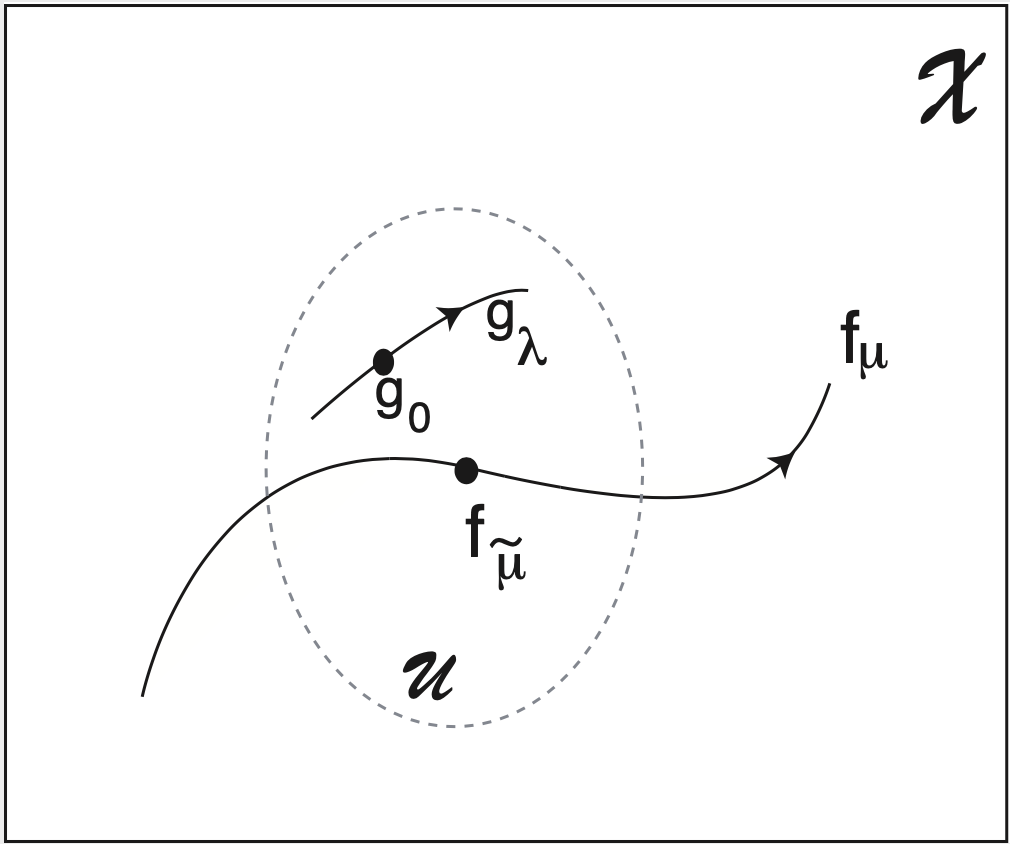}
    \caption{\small{ Illustration of the main idea of Theorem \ref{thrm:main}:   for all 
neighbourhood $\mathcal{U}\subset \mathcal{X}$ of $f_{\tilde{\boldsymbol{\mu}}}$, $\tilde{\boldsymbol{\mu}}\in I$,  there exists a generic family $\left( g_{ \boldsymbol{\lambda}}\right)_{\boldsymbol{\lambda}\in [-1,1]} \in\mathcal{U}$  exhibiting persistently strange attractors.}} \label{fig:scheme1}
\end{figure}

After studying the dynamics of the   differential equation \eqref{eq:poly_rep_8}, we   prove  Theorem \ref{thrm:main} by making use of a homoclinic cycle to the interior equilibrium (of Shilnikov type), whose existence is a criterion for observable and persistent chaotic dynamics.  
The core of the present work goes further: we describe the phenomenological scenario leading to the emergence of a strange attractor.

\section{Bifurcation analysis}
\label{s: bifurcation analysis}

We proceed to the analysis of the one-parameter family of differential equations~\eqref{eq:poly_rep_8}.  We describe the dynamics on the different faces, including the emergence of different equilibria on the edges.
Our analysis will be focused on   $\boldsymbol{\mu}\in \left[-\frac{2938}{95},10\right]$, where a unique equilibrium exists in $[0,1]^3$. This equilibrium will play an important role in the emergence of the strange attractor.

In what follows, we list some assertions that have been analytically found.

\subsection{Boundary}
We describe the equilibria and bifurcations on the boundary of $[0,1]^3$  as function of $\boldsymbol{\mu}$.
The equilibria of system~\eqref{eq:poly_rep_8} do not necessarily belong to the cube.
Throughout this article, the equilibria are those that lie on  $[0,1]^3$ and formal equilibria (as defined in~\cite[Def. 4.1]{alishah2015conservative}) lie outside it.

\begin{figure}[h]
    \centering
\hspace{-10mm}
    \begin{subfigure}[t]{0.3\textwidth}\centering
        \includegraphics[width=5.5cm]{./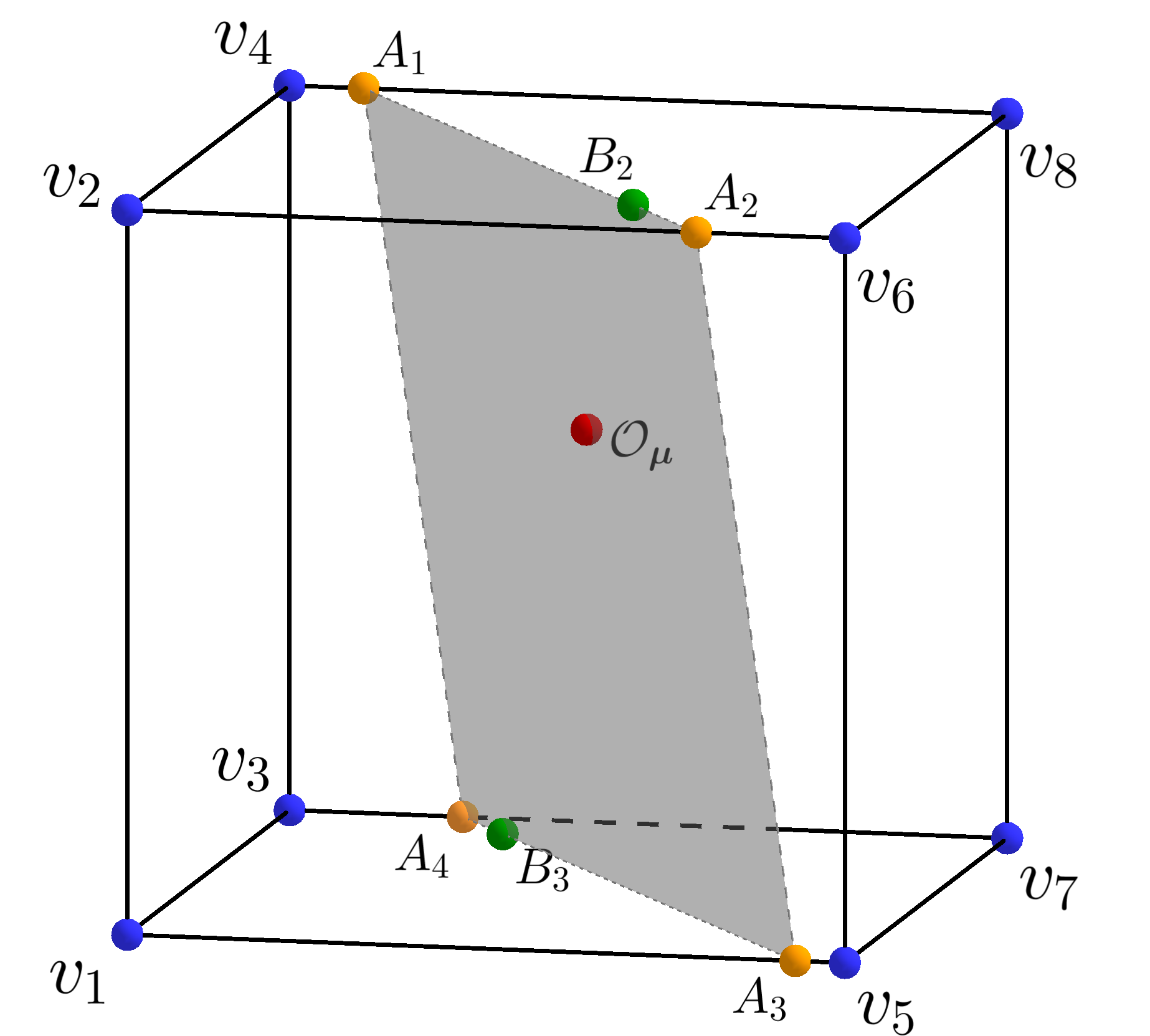}
        \includegraphics[width=5.687cm]{./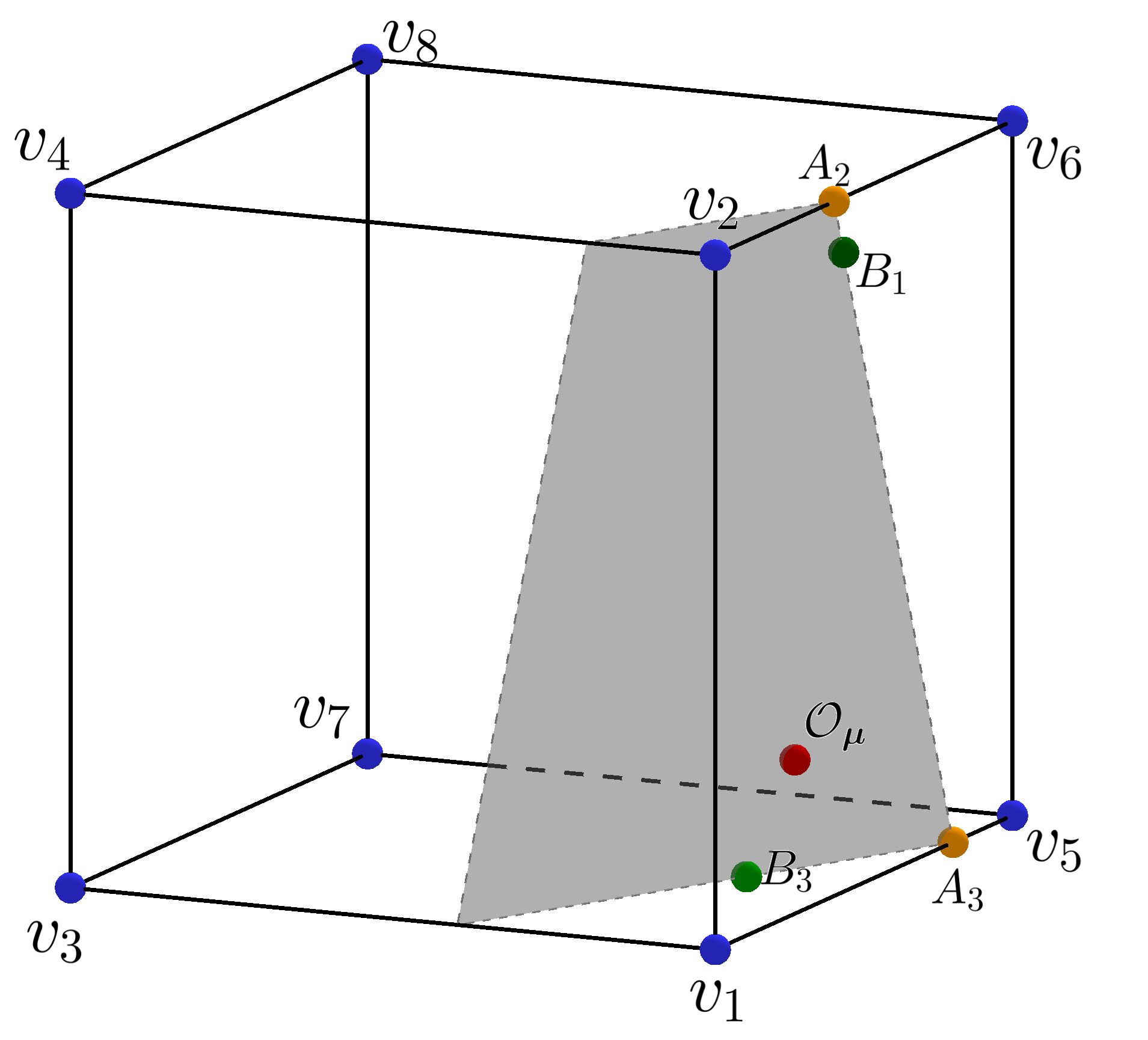}
    \end{subfigure}
    \quad\quad\quad\quad
    \begin{subfigure}[t]{0.3\textwidth}\centering
        \includegraphics[width=5.5cm]{./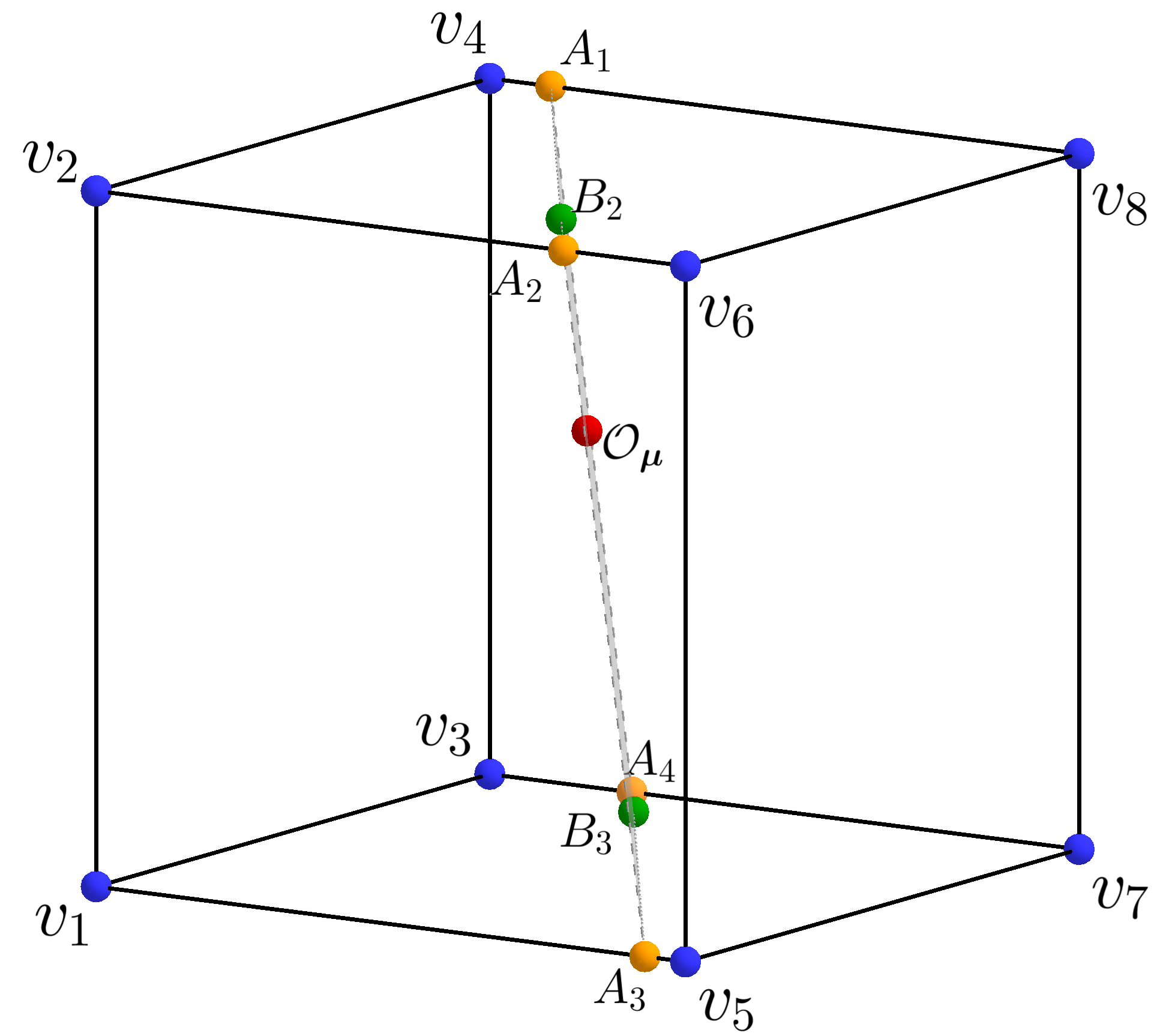}
        \includegraphics[width=5.5cm]{./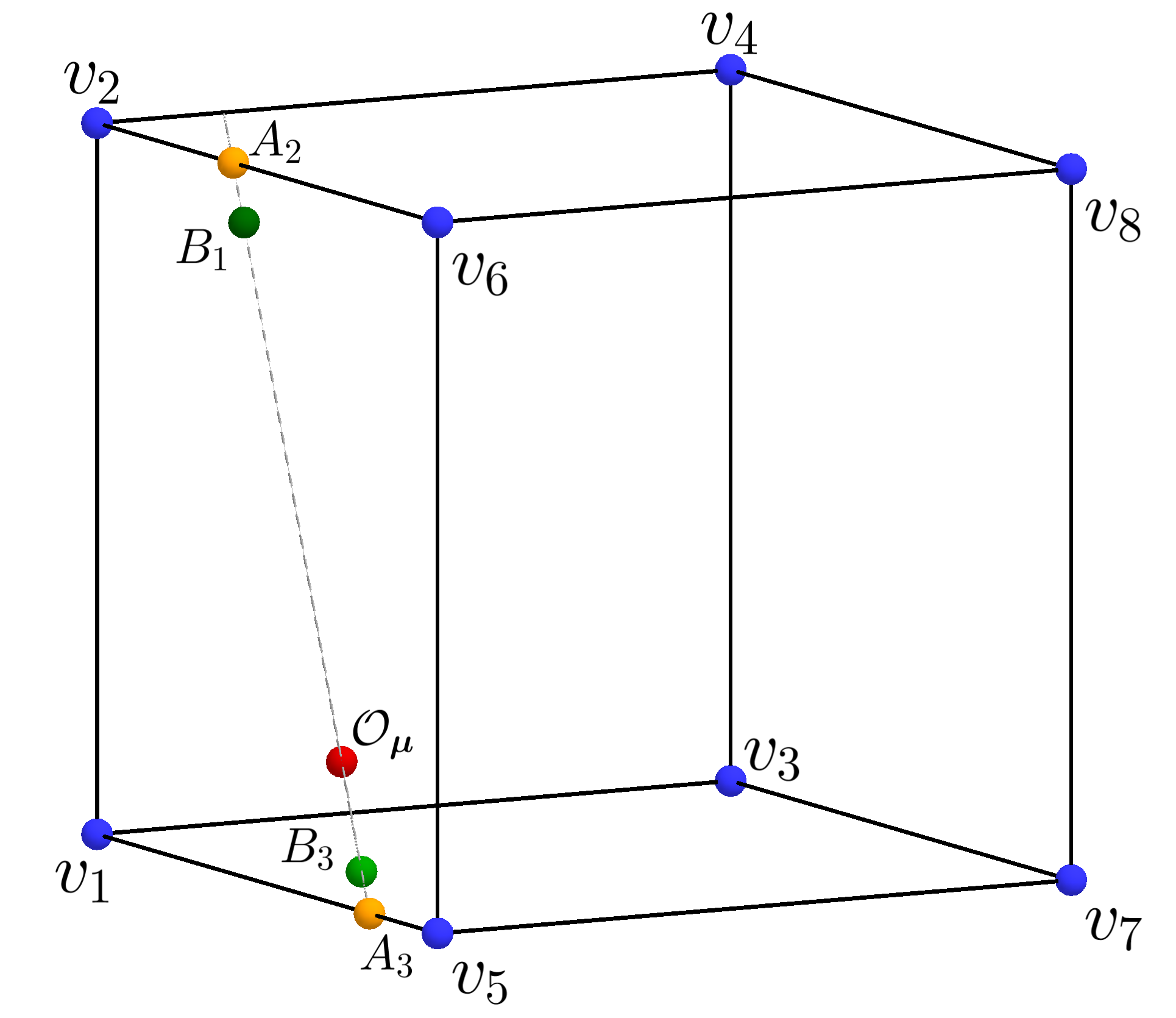}
    \end{subfigure}
    \vspace{-.3cm}
    \caption{\small{Different perspectives of the phase space and the corresponding equilibria of  \eqref{eq:poly_rep_8}: the eight vertices $v_1, \dots , v_8$ (in blue), four equilibria on edges, $A_1, A_2, A_3 , A_4$ (in orange), three equilibria on faces, $B_1, B_2, B_3$ (in green), and the interior equilibrium $\mathcal{O}_{\boldsymbol{\mu}}$ (in red), for $\boldsymbol{\mu}=-15$ (top) and $\boldsymbol{\mu}=4$ (bottom). These equilibria lie on a plane  (se the remark after Lemma~\ref{lem:fact_a}).}}\label{fig:equilibria_on_cube}
\end{figure}

From now on, all the figures with numerical plots of the flow of \eqref{eq:poly_rep_8} on  $[0,1]^3$ are in the same position of Figure~\ref{fig:equilibria_on_cube} (up left) where $v_1=(0,0,0)$ is the vertex   located in the lower left front corner.

\begin{figure}[h]
    \includegraphics[width=14cm]{./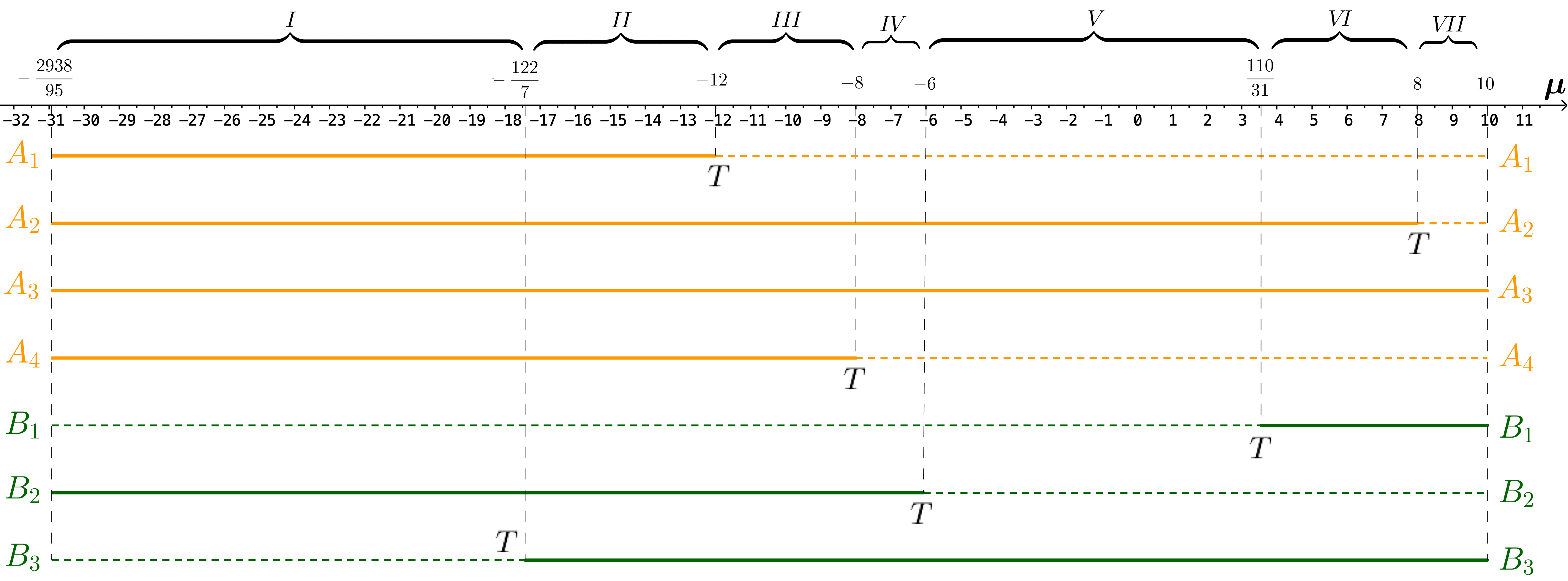}
    \caption{\small{The values of $\boldsymbol{\mu}$ for which the equilibria of system~\eqref{eq:poly_rep_8} exist on $[0,1]^3$ (continuous line) and outside $[0,1]^3$ (dashed line) -- formal equilibria.
The Roman numerals $\RN{1},\dots,\RN{7}$ represents the cases of Table \ref{tbl:cases} and the letter $T$ corresponds to the values of $\boldsymbol{\mu}$ for which a transcritical bifurcation occur.
}}\label{fig:equilibria_on_scale}
\end{figure}

We describe a list of equilibria that appear on the edges and faces of the cube $[0,1]^3$, as function of the parameter $\boldsymbol{\mu}$.
The cube has six faces defined by, for  $i\in\{1, 3, 5 \}$,
\begin{align*}
\sigma_i &: \qquad x_{i+1} = 1\\
\sigma_{i+1} &: \qquad x_{i+1} =0.
\end{align*}
In Table~\ref{tbl:Representation_of_vs} we identify the vertices that belong to each face.
As suggested in Figure \ref{fig:equilibria_on_cube}, we set the notation $A_j$, $j=1, 2, 3, 4$ for equilibria on the edges and $B_j$, $j=1, 2, 3$ for equilibria on the interior of the faces.
Formally, the $A$'s and $B$'s equilibria depend on $\boldsymbol{\mu}$ but, once again, we omit their dependence on the parameter.

\begin{lem}\label{Lemma4}
With respect to system~\eqref{eq:poly_rep_8},  for $\boldsymbol{\mu} \in [-\frac{2938}{95},10]$, the following assertions hold:
\begin{enumerate}
	\vspace{.3cm}\item The eight vertices and $A_3=\left( \frac{12-\boldsymbol{\mu}}{14-\boldsymbol{\mu}},0,0 \right)$ exist  for $\boldsymbol{\mu} \in \left[-\frac{2938}{95},10 \right]$,

	\vspace{.3cm}\item $A_1=\left( \frac{\boldsymbol{\mu}+12}{\boldsymbol{\mu}-14},1,1 \right) \,\textrm{exists in the cube}\quad \Leftrightarrow\quad \boldsymbol{\mu} \in \left[ -\frac{2938}{95},-12 \right[$,

	\vspace{.3cm}\item $A_4=\left( \frac{8+\boldsymbol{\mu}}{\boldsymbol{\mu}-14},1,0 \right) \,\textrm{exists in the cube}\quad \Leftrightarrow\quad \boldsymbol{\mu} \in \left[ -\frac{2938}{95},-8 \right[$,

	\vspace{.3cm}\item $A_2=\left( \frac{8-\boldsymbol{\mu}}{14-\boldsymbol{\mu}},0,1 \right) \,\textrm{exists in the cube}\quad \Leftrightarrow\quad \boldsymbol{\mu} \in \left[ -\frac{2938}{95},8 \right[$,

	\vspace{.3cm}\item $B_1=\left( \frac{15+\boldsymbol{\mu}}{40+\boldsymbol{\mu}},0,\frac{27(10-\boldsymbol{\mu})}{4(40+\boldsymbol{\mu})} \right) \,\textrm{exists in the cube}\quad \Leftrightarrow\quad \boldsymbol{\mu} \in \left] \frac{110}{31}, 10 \right]$,
	
	\vspace{.3cm}\item $B_2=\left( \frac{62+\boldsymbol{\mu}}{86+\boldsymbol{\mu}},-\frac{3(6+\boldsymbol{\mu})}{2(86+\boldsymbol{\mu})},1 \right) \,\textrm{exists in the cube}\quad \Leftrightarrow\quad \boldsymbol{\mu} \in \left[ -\frac{2938}{95},-6 \right[$,
	
	\vspace{.3cm}\item $B_3=\left( \frac{38+\boldsymbol{\mu}}{86+\boldsymbol{\mu}},\frac{5(10-\boldsymbol{\mu})}{2(86+\boldsymbol{\mu})},0 \right) \,\textrm{exists in the cube}\quad \Leftrightarrow\quad \boldsymbol{\mu} \in \left] -\frac{122}{7}, 10 \right]$.
\end{enumerate}
\end{lem}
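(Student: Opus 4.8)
The plan is to reduce the equilibrium condition for \eqref{eq:poly_rep_8} to the vanishing of its three factored components and then to solve the resulting linear equations edge by edge and face by face. Writing
\[
\dot{x}=x(1-x)\,P,\qquad \dot{y}=y(1-y)\,Q,\qquad \dot{z}=z(1-z)\,R,
\]
with $P=12-\boldsymbol{\mu}+(\boldsymbol{\mu}-14)x-20y-4z$, $Q=-10+20x+4y-4z$ and $R=27-54x+11y-4z$, any equilibrium must satisfy, for each coordinate separately, that $x$ (respectively $y$, $z$) equals $0$, equals $1$, or annihilates $P$ (respectively $Q$, $R$). I would organise the search by the number of coordinates that annihilate their linear form: zero gives the eight vertices, one gives the edge equilibria $A_j$, two gives the face equilibria $B_j$, and three gives the interior equilibrium $\mathcal{O}_{\boldsymbol{\mu}}$.

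For the edge equilibria I would fix two coordinates at values in $\{0,1\}$ and solve the single remaining equation for the free coordinate. All four listed $A_j$ have $x$ free, so $P=0$ with $(y,z)$ ranging over the corners of $\{0,1\}^2$ gives $x=\frac{12-\boldsymbol{\mu}-20y-4z}{14-\boldsymbol{\mu}}$, which reproduces the four stated formulas. Because $14-\boldsymbol{\mu}>0$ on $\left[-\frac{2938}{95},10\right]$, the upper bound $x\le 1$ holds automatically (it reduces to $20y+4z\ge -2$), so existence is governed solely by $x\ge 0$, i.e. $12-\boldsymbol{\mu}-20y-4z\ge 0$. This single inequality yields $\boldsymbol{\mu}<-12$ for $A_1$, $\boldsymbol{\mu}<-8$ for $A_4$ and $\boldsymbol{\mu}<8$ for $A_2$, while for $A_3$ it reads $\boldsymbol{\mu}\le 12$ and so never fails on the interval. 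I would then check that the candidates with $y$ or $z$ free (the edges parallel to the remaining two axes) always land outside $[0,1]$, so the list $A_1,\dots,A_4$ is complete.

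For the face equilibria I would fix one coordinate in $\{0,1\}$ and solve the corresponding $2\times 2$ linear system in the two free coordinates. On the face $y=0$, eliminating $z$ between $P=0$ and $R=0$ gives $(\boldsymbol{\mu}+40)x=\boldsymbol{\mu}+15$, and back-substitution produces the displayed $z$, which is $B_1$; on the faces $z=1$ and $z=0$, eliminating $y$ between $P=0$ and $Q=0$ gives $(\boldsymbol{\mu}+86)x=\boldsymbol{\mu}+62$ and $(\boldsymbol{\mu}+86)x=\boldsymbol{\mu}+38$, yielding $B_2$ and $B_3$. Since $\boldsymbol{\mu}+40>0$ and $\boldsymbol{\mu}+86>0$ throughout the interval, the cube constraints again become linear inequalities in $\boldsymbol{\mu}$, and the existence interval is cut out by the binding one: $z\le 1\Leftrightarrow \boldsymbol{\mu}\ge\frac{110}{31}$ for $B_1$, $y\le 1\Leftrightarrow\boldsymbol{\mu}\ge-\frac{122}{7}$ for $B_3$, and $y\ge 0\Leftrightarrow\boldsymbol{\mu}\le -6$ for $B_2$ (the remaining constraints being automatically satisfied). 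Checking that the three remaining faces $x=0$, $x=1$ and $y=1$ produce solutions outside the cube confirms that $B_1,B_2,B_3$ are the only face equilibria.

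The mathematical content is elementary linear algebra, so the only real obstacle is bookkeeping. One must track the signs of the denominators $14-\boldsymbol{\mu}$, $\boldsymbol{\mu}+40$ and $\boldsymbol{\mu}+86$ over the whole window to orient the inequalities correctly, identify which of the several cube constraints is actually binding (most turn out to be vacuous and must be discarded), and justify the open versus closed endpoints. Each open endpoint is exactly the parameter value where the equilibrium collides with a lower-dimensional one -- a transcritical degeneracy, marked $T$ in Figure \ref{fig:equilibria_on_scale} -- whereas the closed endpoints $\boldsymbol{\mu}\in\left\{-\frac{2938}{95},10\right\}$ are simply the fixed boundary of the domain under study.
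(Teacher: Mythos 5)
Your proposal is correct and follows exactly the route the paper intends: the paper's ``proof'' is the single remark that the lemma follows by computing the zeros of $f_{\boldsymbol{\mu}}$ and intersecting with $[0,1]^3$, and your case-by-case solution of the factored linear forms $P,Q,R$ on the vertices, edges and faces (with the sign checks on $14-\boldsymbol{\mu}$, $\boldsymbol{\mu}+40$, $\boldsymbol{\mu}+86$ and the identification of the binding cube constraints) is a faithful and complete elaboration of that computation. The formulas, the existence intervals, and the exclusion of the remaining edges and faces all check out.
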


The proof of Lemma \ref{Lemma4} is   elementary by computing zeros of $f_{\boldsymbol{\mu}}$ and taking into account that they should live in $[0,1]^3$. The evolution (as function of $\boldsymbol{\mu}$) of the equilibria $A_1, A_2, A_3, A_4$ on the edges and $B_1, B_2, B_3$ on the faces is depicted in the scheme of Figure \ref{fig:equilibria_on_scale}.
The eigenvalues and eigendirections are summarised in Tables \ref{tbl:Eigenv_of_As} and \ref{tbl:Eigenv_of_Bs}.
Using the sign of the eigenvalues, as well as their evolution, we are able to locate \emph{transcritical bifurcations},
which are summarised in the following paragraph and pointed out in Figure \ref{fig:equilibria_on_scale}.

We will consider sub-intervals of $[-\frac{2938}{95},10]$ based on the values of $\boldsymbol{\mu}$ for which the bifurcations occur.
Namelly:
\begin{itemize}
	\item at $\boldsymbol{\mu}=-12$ the vertex $v_4$ undergoes a transcritical bifurcation (see the zero eigenvalue in Table~\ref{tbl:Eigenv_of_vs}) responsible for the transition of $A_1$  from $[0,1]^3$ to outside, becoming a formal equilibrium; the analysis of the bifurcation associated to $A_2$ and $A_4$ is similar at $\boldsymbol{\mu}=8$ and $\boldsymbol{\mu}=-8$, respectively;
	\item at $\boldsymbol{\mu}=-6$, the equilibrium $A_2$ undergoes a transcritical bifurcation and  $B_2$ evolves from an equilibrium (inside the cube) to a formal one (outside the cube); the reverse happens to $B_1$ at $\boldsymbol{\mu}=\frac{110}{31}$;
	\item at $\boldsymbol{\mu}=-\frac{122}{7}$, the equilibrium $A_4$ undergoes a transcritical bifurcation and $B_3$ evolves from a formal equilibrium (outside the cube) to an equilibrium (inside the cube);
	\item at $\boldsymbol{\mu}=b_2\approx -21.9$ (see Table~\ref{tbl:Eigenv_of_Bs}) and $\boldsymbol{\mu}=-12$, the equilibrium $B_2$ undergoes a Belyakov transition; and, at $\boldsymbol{\mu}=b_3\approx -14.22$ the  $B_3$ also undergoes the same bifurcation (see Table~\ref{tbl:Eigenv_of_Bs}).
\end{itemize}


\subsection{Interior}
In this section, we focus our attention on the interior equilibrium and its relation to others on the   boundary.

\begin{lem}
\label{lem:fact_a}
For $\boldsymbol{\mu} \in\,]-\frac{2938}{95},10[$, system~\eqref{eq:poly_rep_8} has a unique interior equilibrium,
whose expression is
\begin{equation}\label{int_equil}\nonumber
\mathcal{O}_{\boldsymbol{\mu}}:=\left( \frac{7 \boldsymbol{\mu} -1042}{7 \boldsymbol{\mu} -2014}, \frac{37 (\boldsymbol{\mu} -10)}{7 \boldsymbol{\mu} -2014},
\frac{109 (\boldsymbol{\mu} -10)}{2 (7 \boldsymbol{\mu} -2014)} \right).
\end{equation}
\end{lem}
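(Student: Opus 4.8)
The plan is to use the product structure of the vector field in \eqref{eq:poly_rep_8}. If a point $(x,y,z)$ lies in the open cube $\inter\left([0,1]^3\right)$, then each factor $x(1-x)$, $y(1-y)$, $z(1-z)$ is strictly positive, so the equilibrium equations $f_{\boldsymbol{\mu}}(x,y,z)=0$ hold if and only if the three \emph{linear} factors vanish. The first step is therefore to record that the interior equilibria are exactly the solutions in $\inter\left([0,1]^3\right)$ of the linear system
\begin{equation*}
\left\{
\begin{array}{l}
(\boldsymbol{\mu}-14)\,x - 20\,y - 4\,z = \boldsymbol{\mu}-12,\\[1mm]
20\,x + 4\,y - 4\,z = 10,\\[1mm]
-54\,x + 11\,y - 4\,z = -27.
\end{array}
\right.
\end{equation*}

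For uniqueness I would compute the determinant $\Delta(\boldsymbol{\mu})$ of the coefficient matrix; a cofactor expansion gives $\Delta(\boldsymbol{\mu}) = 4\,(7\boldsymbol{\mu}-2014)$, which vanishes only at $\boldsymbol{\mu}=2014/7$. Since this root lies far to the right of the interval $\left]-\tfrac{2938}{95},10\right[$ (indeed $\Delta(\boldsymbol{\mu})<0$ throughout it), the system is nonsingular and has a single solution for each admissible $\boldsymbol{\mu}$. Solving by Cramer's rule and cancelling the common factor $4$ against $\Delta(\boldsymbol{\mu})$ returns precisely the three coordinates of $\mathcal{O}_{\boldsymbol{\mu}}$; in particular this already shows there is \emph{at most} one interior equilibrium.

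It remains to verify that this unique solution actually lies in $\inter\left([0,1]^3\right)$, and this sign and inequality bookkeeping is the only genuinely delicate step. Writing each coordinate of $\mathcal{O}_{\boldsymbol{\mu}}$ as a fraction with the negative denominator $7\boldsymbol{\mu}-2014$, positivity of the numerators over the interval gives $x,y,z>0$, while clearing the (negative) denominator turns the three upper bounds $x<1$, $y<1$, $z<1$ into $-1042>-2014$ (always true), $\boldsymbol{\mu}>-\tfrac{274}{5}$, and $\boldsymbol{\mu}>-\tfrac{2938}{95}$. I expect the payoff of this computation to be that the binding constraints are exactly $\boldsymbol{\mu}<10$ (from $y,z>0$) and $\boldsymbol{\mu}>-\tfrac{2938}{95}$ (from $z<1$), so that the open interval in the statement is precisely the set of parameters for which the linear solution stays strictly inside the cube; at the endpoints the equilibrium collides with the boundary, which is consistent with the transcritical picture of Lemma~\ref{Lemma4} and Figure~\ref{fig:equilibria_on_scale}.
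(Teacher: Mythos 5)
Your proposal is correct and follows the same route as the paper, which simply displays the linear system obtained by cancelling the strictly positive factors $x(1-x)$, $y(1-y)$, $z(1-z)$ in the interior and declares the rest immediate. Your determinant $4(7\boldsymbol{\mu}-2014)$, the Cramer's-rule coordinates, and the boundary inequalities (with $z<1$ giving the binding constraint $\boldsymbol{\mu}>-\tfrac{2938}{95}$ and $y,z>0$ giving $\boldsymbol{\mu}<10$) all check out, so you have merely supplied the details the authors omitted.
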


\begin{proof}
The proof is immediate by computing the non-trivial zeros of the vector field   \eqref{eq:poly_rep_8}:
\begin{equation}\nonumber
\left \{ \begin{array}{l}
12- \boldsymbol{\mu}+(\boldsymbol{\mu}-14) x-20 y - 4 z =0 \\[2mm]
 -10 +20 x + 4 y - 4 z =0 \\[2mm]
27 - 54 x + 11 y - 4 z =0
\end{array} \right. .
\end{equation}

\end{proof}

Taking into account that the equilibria $B_1$, $B_2$ and $B_3$ depend on $\boldsymbol{\mu}$, it is worth to notice that 
$$ \lim_{\boldsymbol{\mu} \rightarrow -\frac{2938}{95}} \mathcal{O}_{\boldsymbol{\mu}}= \lim_{\boldsymbol{\mu} \rightarrow -\frac{2938}{95}} B_2 = \left( \frac{123}{218},\frac{74}{109},1\right) $$ and 
$$\lim_{\boldsymbol{\mu} \rightarrow 10}\mathcal{O}_{\boldsymbol{\mu}}=\lim_{\boldsymbol{\mu} \to 10} B_1=\lim_{\boldsymbol{\mu} \to 10} B_3 = \left(\frac{1}{2}, 0,0\right), 
$$
which means that when $\boldsymbol{\mu} \in\,]-\frac{2938}{95},10]$, the point $O_ {\boldsymbol{\mu}}$ travels from the face $\sigma_5$ to the edge which connects $v_1$ to $v_5$, the intersection of the faces $\sigma_4$ and $\sigma_6$.

\begin{rem}
The points $A_1$, $A_2$, $A_3$, $A_4$, $B_1$, $B_2$,   $B_3$ and $\mathcal{O}_{\boldsymbol{\mu}}$ belong to the plane defined by
$
 (14-\boldsymbol{\mu}) x+20y+ {4} z= {12-\boldsymbol{\mu}}.
$
This follows immediately  from the first equation of system \eqref{eq:poly_rep_8}.
This plane is not flow-invariant.
\end{rem}

\begin{lem}\label{corollary1}
With respect to $Df_{\boldsymbol{\mu}} \left(\mathcal{O}_{\boldsymbol{\mu}} \right)$,  there exist $\mu_1, \mu_2, \mu_4  \in [-\frac{2938}{95},10]$ such that  $\mu_1<\mu_2<\mu_4$ and\footnote{A parameter value $\mu_3\in\,\,]\mu_2, \mu_4[$ appears later in Subsection \ref{ss:LE1}.}:
\begin{enumerate}
\item for $\boldsymbol{\mu} =\mu_1$, the equilibrium $\mathcal{O}_{\boldsymbol{\mu}}$ undergoes a Belyakov transition;
\item  for $\boldsymbol{\mu}= \mu_2 $ and  $\boldsymbol{\mu}= \mu_4 $ the equilibrium $\mathcal{O}_{\boldsymbol{\mu}}$ undergoes a supercritical Hopf bifurcation.
\end{enumerate}
\end{lem}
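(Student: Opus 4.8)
The plan is to reduce every assertion to an explicit algebraic condition on the coefficients of the characteristic polynomial of $Df_{\boldsymbol{\mu}}(\mathcal{O}_{\boldsymbol{\mu}})$. First I would substitute the closed-form expression for $\mathcal{O}_{\boldsymbol{\mu}}$ from Lemma~\ref{lem:fact_a} into the Jacobian of the right-hand side of~\eqref{eq:poly_rep_8}, obtaining a $3\times3$ matrix $J(\boldsymbol{\mu})=Df_{\boldsymbol{\mu}}(\mathcal{O}_{\boldsymbol{\mu}})$ whose entries are rational functions of $\boldsymbol{\mu}$ with denominator a power of $7\boldsymbol{\mu}-2014$. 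Its characteristic polynomial has the form
\begin{equation}\nonumber
p_{\boldsymbol{\mu}}(\lambda)=\lambda^3+a_2(\boldsymbol{\mu})\,\lambda^2+a_1(\boldsymbol{\mu})\,\lambda+a_0(\boldsymbol{\mu}),
\end{equation}
with $a_2=-\operatorname{tr}J$, $a_1$ the sum of the principal $2\times2$ minors of $J$, and $a_0=-\det J$, each a rational function of $\boldsymbol{\mu}$ that can be written down explicitly. All three conditions in the statement then become conditions on $a_0,a_1,a_2$.

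For the Belyakov transition at $\mu_1$, I would use that a real cubic has a complex-conjugate pair of roots precisely when its discriminant is negative and three real roots when it is positive; the node-to-focus change therefore occurs at a zero of the discriminant $\Delta(\boldsymbol{\mu})$ of $p_{\boldsymbol{\mu}}$, where two real eigenvalues collide into a double root and then split into a complex pair. Since $\Delta$ is a rational function of $\boldsymbol{\mu}$, I would locate $\mu_1$ as a root of its numerator lying in $[-\frac{2938}{95},10]$ and check that at $\mu_1$ the colliding eigenvalues have a common \emph{nonzero} real part, so that the sign of the real part is genuinely preserved across the transition (as required by the definition) and $\mu_1$ is not a coincidence of roots at the origin.

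For the two supercritical Hopf bifurcations at $\mu_2$ and $\mu_4$, I would invoke the classical criterion for a cubic: $p_{\boldsymbol{\mu}}$ has a pair of purely imaginary roots $\pm i\omega$ with $\omega>0$ exactly when $a_0(\boldsymbol{\mu})=a_1(\boldsymbol{\mu})\,a_2(\boldsymbol{\mu})$ together with $a_1(\boldsymbol{\mu})>0$, in which case $\omega^2=a_1$. Thus $\mu_2$ and $\mu_4$ are the two roots in $[-\frac{2938}{95},10]$ of $H(\boldsymbol{\mu}):=a_0(\boldsymbol{\mu})-a_1(\boldsymbol{\mu})\,a_2(\boldsymbol{\mu})$ at which $a_1>0$; the ordering $\mu_1<\mu_2<\mu_4$ then follows by comparing these explicit numbers. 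Transversality, namely $\tfrac{d}{d\boldsymbol{\mu}}\mathrm{Re}\,\lambda(\boldsymbol{\mu})\neq0$ at each crossing, I would verify by differentiating $p_{\boldsymbol{\mu}}(\lambda(\boldsymbol{\mu}))=0$ implicitly at $\lambda=i\omega$ (equivalently, by checking that $H'$ does not vanish there).

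The main obstacle is establishing that both Hopf bifurcations are \emph{supercritical} rather than merely degenerate: this is the only assertion not reducible to the signs of $a_0,a_1,a_2$. To settle it I would reduce the flow to the two-dimensional center manifold at $\boldsymbol{\mu}=\mu_2$ (resp.\ $\mu_4$) — which is well defined since the third eigenvalue equals $-a_2(\boldsymbol{\mu})\neq0$ there — put the resulting planar vector field in Poincar\'e normal form, and compute the first Lyapunov coefficient $\ell_1$ via the standard Guckenheimer--Holmes formula, which involves the second- and third-order partial derivatives of $f_{\boldsymbol{\mu}}$ at $\mathcal{O}_{\boldsymbol{\mu}}$. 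Showing $\ell_1<0$ guarantees that an attracting periodic orbit is born while the focus loses stability, as stated. Because $f_{\boldsymbol{\mu}}$ is an explicit cubic polynomial vector field these derivatives are available in closed form, but the normal-form reduction is algebraically heavy; in practice I expect this step to be carried out with computer algebra, and the certification of the sign of $\ell_1$ to be the delicate point.
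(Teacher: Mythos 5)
Your proposal is correct in outline, but it follows a genuinely different route from the paper. The paper's own proof is essentially numerical--graphical: the authors compute the three eigenvalues of $Df_{\boldsymbol{\mu}}(\mathcal{O}_{\boldsymbol{\mu}})$ as functions of $\boldsymbol{\mu}$, plot their real and imaginary parts (Figures~\ref{fig:Egvls_int_eq_func_mu_1} and~\ref{fig:Egvls_int_eq_func_mu_2}), read off $\mu_1$, $\mu_2$, $\mu_4$ from where the imaginary part becomes nonzero and where the real part of the complex pair changes sign, and assert transversality ``as suggested by'' the zoomed plots; the numerical values are then confirmed with \textit{MatCont}. Supercriticality is not certified analytically at all --- it is inferred from the observed birth of an attracting periodic solution $\mathcal{C}_{\boldsymbol{\mu}}$. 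You instead reduce everything to explicit algebraic conditions on the coefficients $a_0,a_1,a_2$ of the characteristic polynomial (which are rational in $\boldsymbol{\mu}$): the Belyakov value as a root of the cubic's discriminant, the Hopf values as roots of $a_0-a_1a_2$ with $a_1>0$, and transversality via the equivalence of $\tfrac{d}{d\boldsymbol{\mu}}\mathrm{Re}\,\lambda\neq0$ with nonvanishing of the derivative of the Routh--Hurwitz function (both of which are correct: for roots $\alpha\pm i\omega$, $r$ one has $a_0-a_1a_2=2\alpha\bigl[(\alpha+r)^2+\omega^2\bigr]$). This buys certifiability --- the bifurcation values become roots of explicit polynomials that could be isolated rigorously --- at the cost of heavy but routine algebra. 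You also correctly identify the one point the paper leaves unaddressed, namely that ``supercritical'' requires the first Lyapunov coefficient $\ell_1<0$ on the center manifold; your plan to compute it via normal form is the standard remedy, and flagging its sign certification as the delicate step is exactly right.
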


\begin{proof}
The characteristic polynomial of $Df_{\boldsymbol{\mu}} \left(\mathcal{O}_{\boldsymbol{\mu}} \right)$ has three roots, which depend on $\boldsymbol{\mu}$.
Although these three functions have an intractable analytical expression,
it is possible to show the existence of 
$\mu_1, \mu_2, \mu_4 \in [-\frac{2938}{95},10]$ such that  $\mu_1< \mu_2<\mu_4$  and the following assertions hold (cf. Figures~\ref{fig:Egvls_int_eq_func_mu_1} and~\ref{fig:Egvls_int_eq_func_mu_2}):
\begin{enumerate}
\item for $\boldsymbol{\mu} \in [-\frac{2938}{95}, \mu_1]$, the three eigenvalues are real and negative;
\item for $\boldsymbol{\mu} \in \,\,]\mu_1, \mu_2[ \,\cup\, ]\mu_4, 10]$, there are two complex conjugate eigenvalues and one real, all of them with negative real part;
\item  for $\boldsymbol{\mu} \in \,\,]\mu_2,  {\mu}_4[$, there are two complex conjugate eigenvalues with positive real part  and one real negative.
\end{enumerate}

As suggested by Figure \ref{fig:Egvls_int_eq_func_mu_2}, the complex (non-real) eigenvalues cross the imaginary axis with strictly positive speed as $ \boldsymbol{\mu}$ passes through $  {\mu}_2$ and  $  {\mu}_4$, confirming that:
$$
\frac{ Re \left( D f_{\boldsymbol{\mu}} \left( \mathcal{O}_{\boldsymbol{\mu}} \right) \right)}{d\, \boldsymbol{\mu} }|_{\boldsymbol{\mu} = {\mu} _2} \neq 0 \neq \frac{ Re \left( D f_{\boldsymbol{\mu}} \left( \mathcal{O}_{\boldsymbol{\mu}} \right) \right) }{d\, \boldsymbol{\mu} }|_{\boldsymbol{\mu} ={\mu} _4}.
$$
  This means that at $\boldsymbol{\mu} =\mu_2$ and $\boldsymbol{\mu}=\mu_4$, the equilibrium $\mathcal{O}_{\boldsymbol{\mu}}$ undergoes a supercritical Hopf bifurcation. When  $\boldsymbol{\mu} =\mu_2$, it gives rise to a non-trivial attracting periodic solution, say $\mathcal{C}_{\boldsymbol{\mu}}$, which collapses again into $\mathcal{O}_{\boldsymbol{\mu}}$ for $\boldsymbol{\mu} =\mu_4$. 

\end{proof}

\begin{figure}[h]
	\includegraphics[width=13cm]{./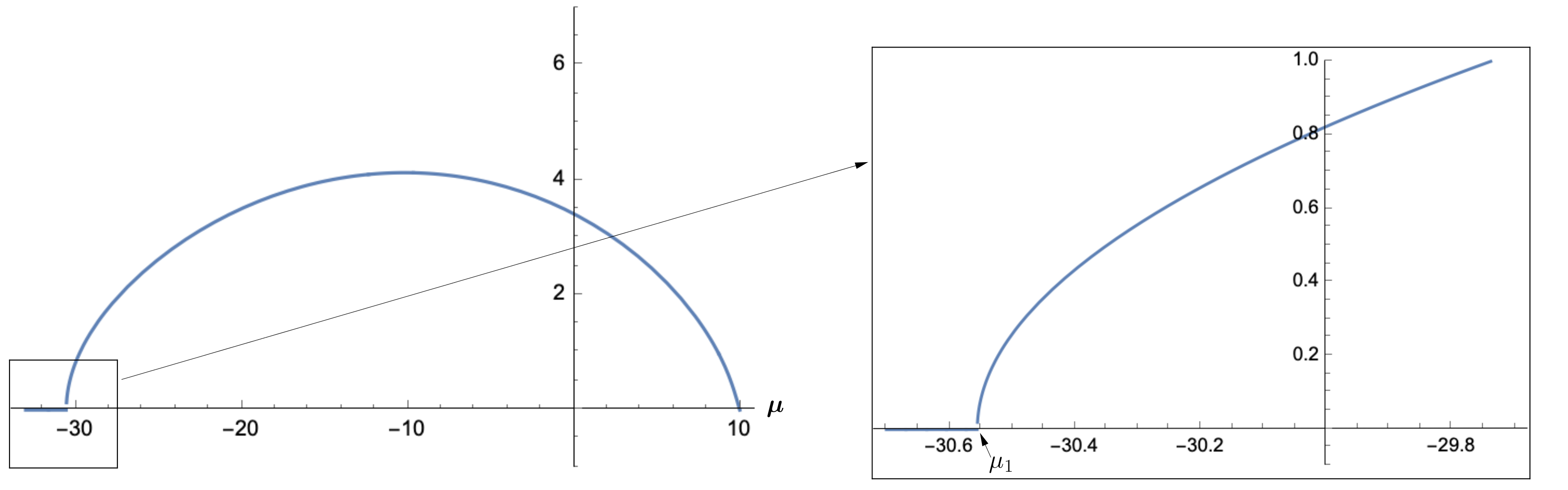}
    \caption{\small{\emph{Belyakov transition}: graph  of the imaginary part of the complex eigenvalues of $Df_{\boldsymbol{\mu}} \left(\mathcal{O}_{\boldsymbol{\mu}} \right)$ for $\boldsymbol{\mu} \in \left[ -\frac{2938}{95}, 10 \right]$ (left) and its zoom around $\mu_1$, $\boldsymbol{\mu} \in \left[ -30.7, -29.7 \right]$ (right), for system~\eqref{eq:poly_rep_8}.}} \label{fig:Egvls_int_eq_func_mu_1}
\end{figure}

\begin{figure}[h]
	\includegraphics[width=13cm]{./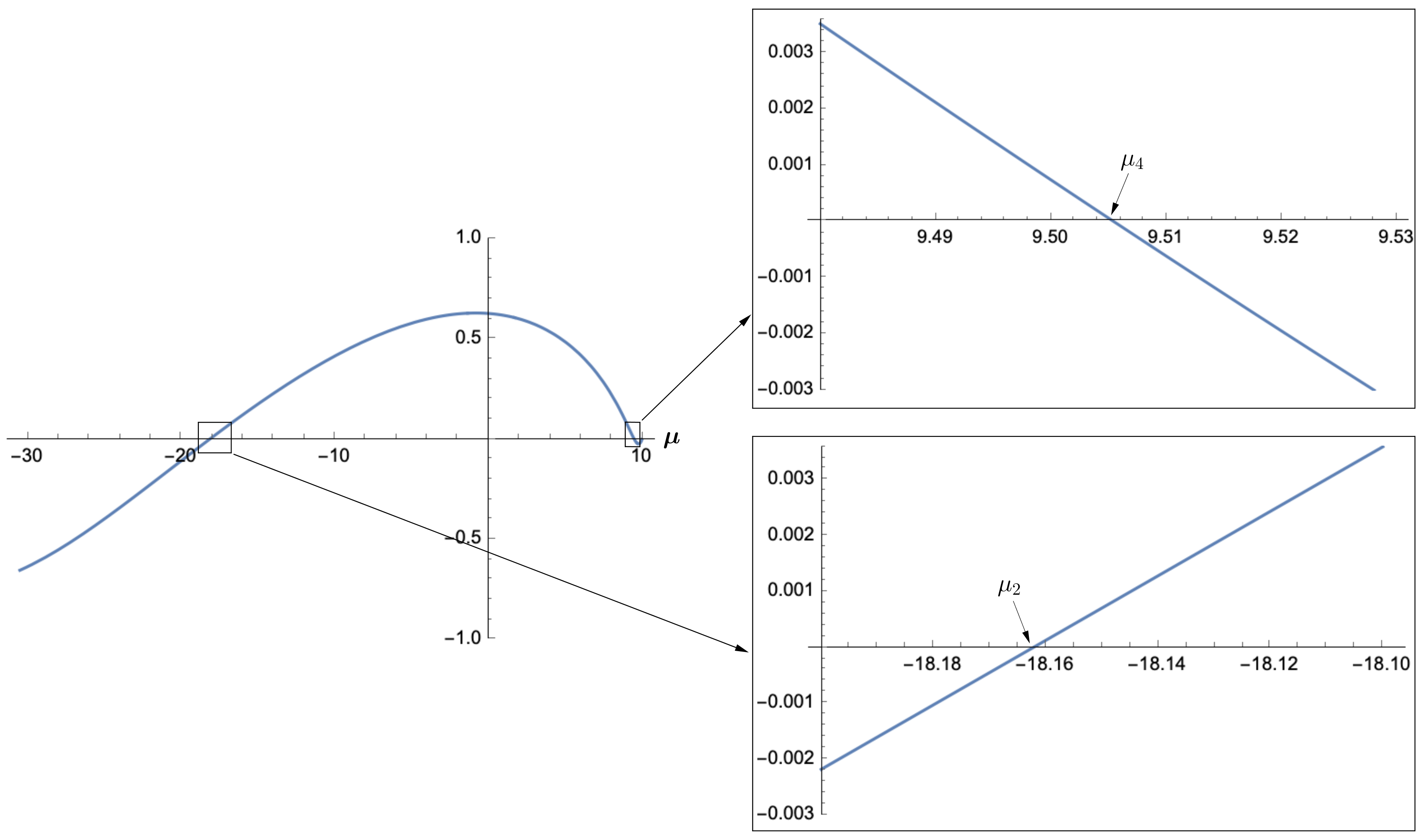}
    \caption{\small{\emph{Hopf bifurcation}: graph of  the real part of the eigenvalues of $Df_{\boldsymbol{\mu}} \left(\mathcal{O}_{\boldsymbol{\mu}} \right)$ for $\boldsymbol{\mu} \in \left[ -\frac{2938}{95}, 10 \right]$ (left) and its zoom around $\mu_4$, $\boldsymbol{\mu} \in \left[ 9.48, 9.53 \right]$ (right up) and around $\mu_2$, $\boldsymbol{\mu} \in \left[ -18.2, -18.1 \right]$ (right down), for system~\eqref{eq:poly_rep_8}.}} \label{fig:Egvls_int_eq_func_mu_2}
\end{figure}

From now on, we set\footnote{These values  coincide with those obtained via \textit{MatCont} \cite{Matcont2008} for $\textrm{MATLAB}^{\circledR}$.}:
 \begin{eqnarray*}
\mu_1 &\mapsto &  \boldsymbol{\mu}_{\textrm{Belyakov}}\approx-30.5550;  \\ 
 \mu_2 &\mapsto & \boldsymbol{\mu}^1_{\textrm{Hopf}}\approx-18.1623;  \\ 
  \mu_4 &\mapsto & \boldsymbol{\mu}^2_{\textrm{Hopf}}\approx 9.5055.
\end{eqnarray*}

For $\boldsymbol{\mu} \in [-\frac{2938}{95}, \mu_2]$,  the equilibrium $\mathcal{O}_{\boldsymbol{\mu}}$ is globally attracting as depicted in Figure~\ref{fig:04_mu=-20_v1_v2}. 
In the context of Game Theory,  it is called a \emph{global attracting mixed Nash equilibrium}, in the sense that it is associated to non-pure strategies.

\begin{figure}[h]
	\includegraphics[width=6.5cm]{./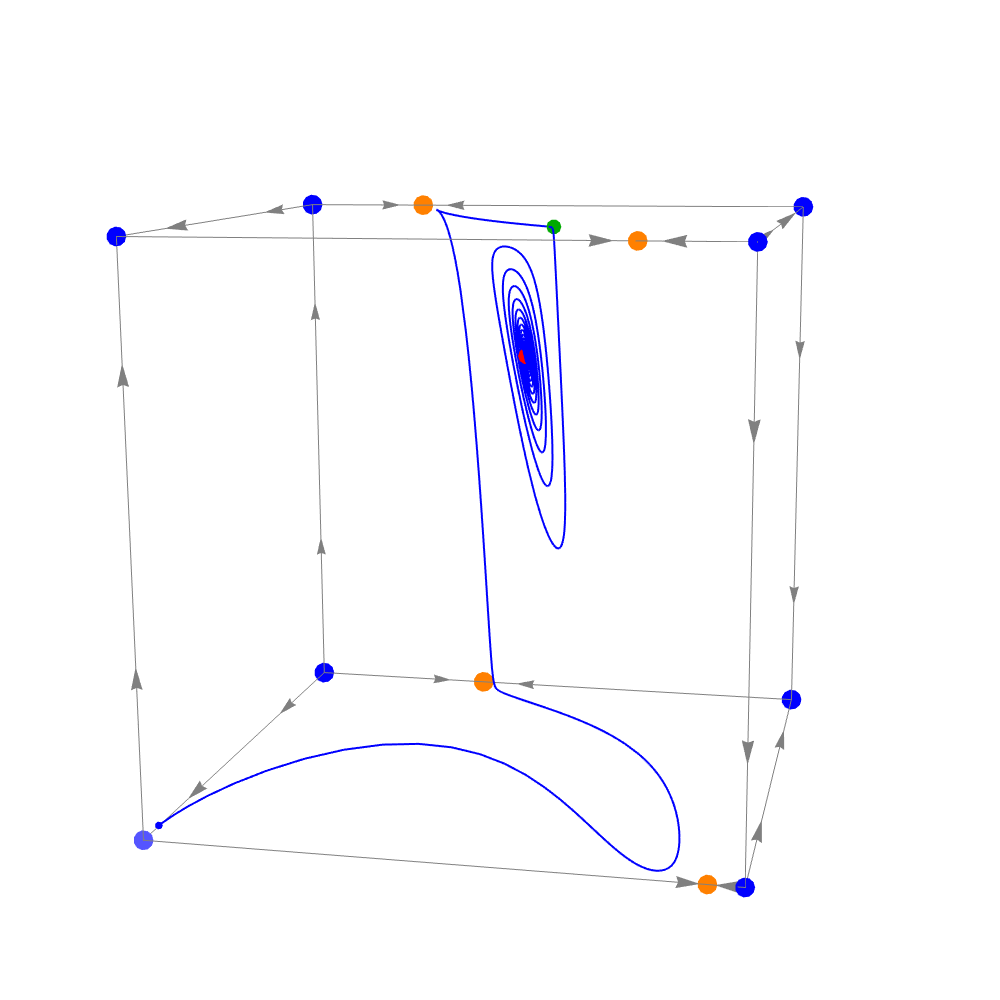}
\hspace{-1.2cm}
	\includegraphics[width=6.5cm]{./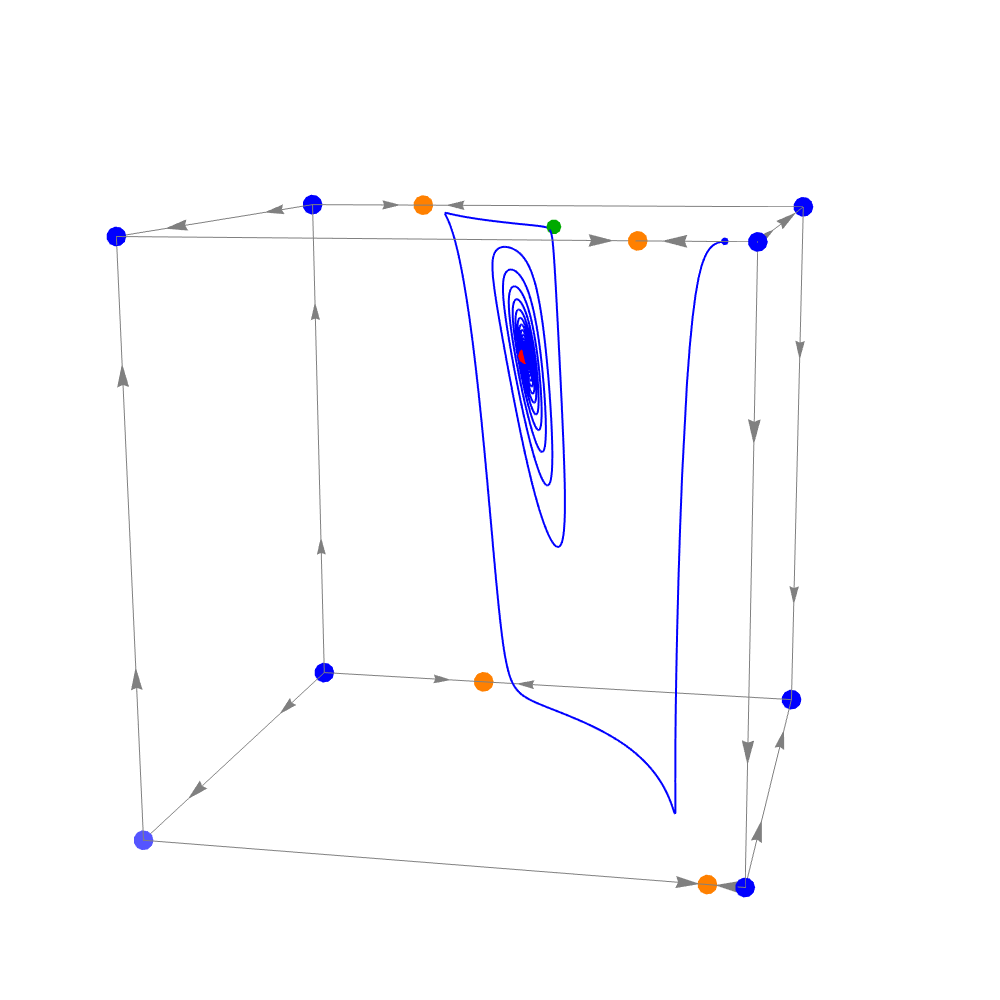}
	\vspace{-.5cm}
    \caption{\small{\emph{Global attractiveness of $\mathcal{O}_{\boldsymbol{\mu}}$ (cf. Proposition~\ref{prop:central_manfld_I} $(1)$):} plot of two orbits (in blue) with initial condition near  $v_1$ (left), initial condition near $v_6$ (right), the interior equilibrium (in red) and the boundary equilibria of system~\eqref{eq:poly_rep_8} for $\boldsymbol{\mu}=-20$  and $t\in [0,50]$. }}\label{fig:04_mu=-20_v1_v2}
\end{figure}

\section{Numerical Analysis}\label{s: numerical analysis}
Using $\textit{Mathematica Wolfram}^{\circledR}$, we present checkable numerical evidences about the vector field $f_{\boldsymbol{\mu}}$, for $\boldsymbol{\mu} \in [-\frac{2938}{95}, 10]$, that will constitute the foundations for the persistence of strange attractors. At the end of this section we discuss the validity of these  numerical results.

\subsection{Lyapunov exponents}
\label{ss:LE1}
Using the method in Sandri ~\cite{sandri1996numerical}, in Figure~\ref{fig:LE} we computed numerically the LE of system~\eqref{eq:poly_rep_8} for an initial condition of the form:
\begin{equation}\nonumber
(x_0, y_0, z_0)=\mathcal{O}_{\boldsymbol{\mu}}+\left \{ \begin{array}{ll}
(\varepsilon,0,-\varepsilon), &  \textrm{if } \boldsymbol{\mu}\in [-\frac{2938}{95},-30[  \\[2mm]
(0,0,\varepsilon), &  \textrm{if } \boldsymbol{\mu}\in [-30,9]  \\[2mm]
(\varepsilon,\varepsilon,0), &  \textrm{if } \boldsymbol{\mu}\in \,\,]9,10]
\end{array} \right.,
\end{equation}
with $\varepsilon=0.001$, thus ensuring that $(x_0, y_0, z_0)\in \inter\left([0,1]^3\right) \backslash W^s(\mathcal{O}_{\boldsymbol{\mu}})$. As suggested in \cite{aguirre2014global}, since  $(x_0, y_0, z_0) \notin W^s(\mathcal{O}_{\boldsymbol{\mu}})$, its trajectory
is strongly governed by the invariant manifold $W^u(\mathcal{O}_{\boldsymbol{\mu}})$,  which plays an essential role in the construction of the H\'enon-type strange attractor of Theorem  \ref{thrm:main}. From a close analysis of Figure~\ref{fig:LE},  we deduce that:
\begin{enumerate}
\item for $\boldsymbol{\mu}<\boldsymbol{\mu}^1_{\textrm{Hopf}}$ and $\boldsymbol{\mu}>\boldsymbol{\mu}^2_{\textrm{Hopf}}$, the three LE are negative;
\item there exists $\mu_3 \in\, \,  ]\boldsymbol{\mu}^1_{\textrm{Hopf}},\boldsymbol{\mu}^2_{\textrm{Hopf}}[$ such that:
\begin{enumerate}
\item  for $\boldsymbol{\mu}\in\, ]\boldsymbol{\mu}^1_{\textrm{Hopf}},\mu_3[$, there are two negative LE and one zero;
\item there are non-trivial subintervals of  $  ]\mu_3, \boldsymbol{\mu}^2_{\textrm{Hopf}} [$ where  there is  one positive LE.
\end{enumerate}
\end{enumerate}

From this analysis, according to~\cite{sandri1996numerical, wolf1985determining}, we  infer that the  attracting set of  system~\eqref{eq:poly_rep_8}, when restricted to the cube's  interior, contains:
\begin{enumerate}
\item   a single  equilibrium, for $\boldsymbol{\mu}<\boldsymbol{\mu}^1_{\textrm{Hopf}}$ (Figure~\ref{fig:04_mu=-20_v1_v2}) and $\boldsymbol{\mu}>\boldsymbol{\mu}^2_{\textrm{Hopf}}$;
\item  a non-trivial periodic solution, for $\boldsymbol{\mu}\in \, ]\boldsymbol{\mu}^1_{\textrm{Hopf}},\mu_3[$ (Figure~\ref{fig:01_limit_cycle});
\item  a strange attractor for some intervals of  $  ]\mu_3, \boldsymbol{\mu}^2_{\textrm{Hopf}} [$ (it follows from $(2)\textrm{(b)}$ and the classification of \cite{wolf1985determining}).
\end{enumerate}

Since $\mu_3$ is the threshold above which we find numerically strange attractors (Figure~\ref{fig:LE}),  we set
\begin{align*}
\mu_3 & \mapsto \boldsymbol{\mu}_{\textrm{SA}}\approx  {1.4645}.
\end{align*}

 As referred in Section \ref{terminology}, a LE is a limit over the variable $t$ and numerical computations  require its truncation.
Since for $\boldsymbol{\mu}> \boldsymbol{\mu}^1_{\textrm{Hopf}}$ there is at least one LE oscillating around the zero value, we decided to consider \emph{positive LE} those greater than $5\times 10^{-3}$.
This will allow to discard uncertain positive Lyapunov exponents due to numerical precision issues. 
Lower precision would complicate the simulations bringing no better results.

\vspace{.3cm}

\begin{figure}[h]
	\includegraphics[width=13cm]{./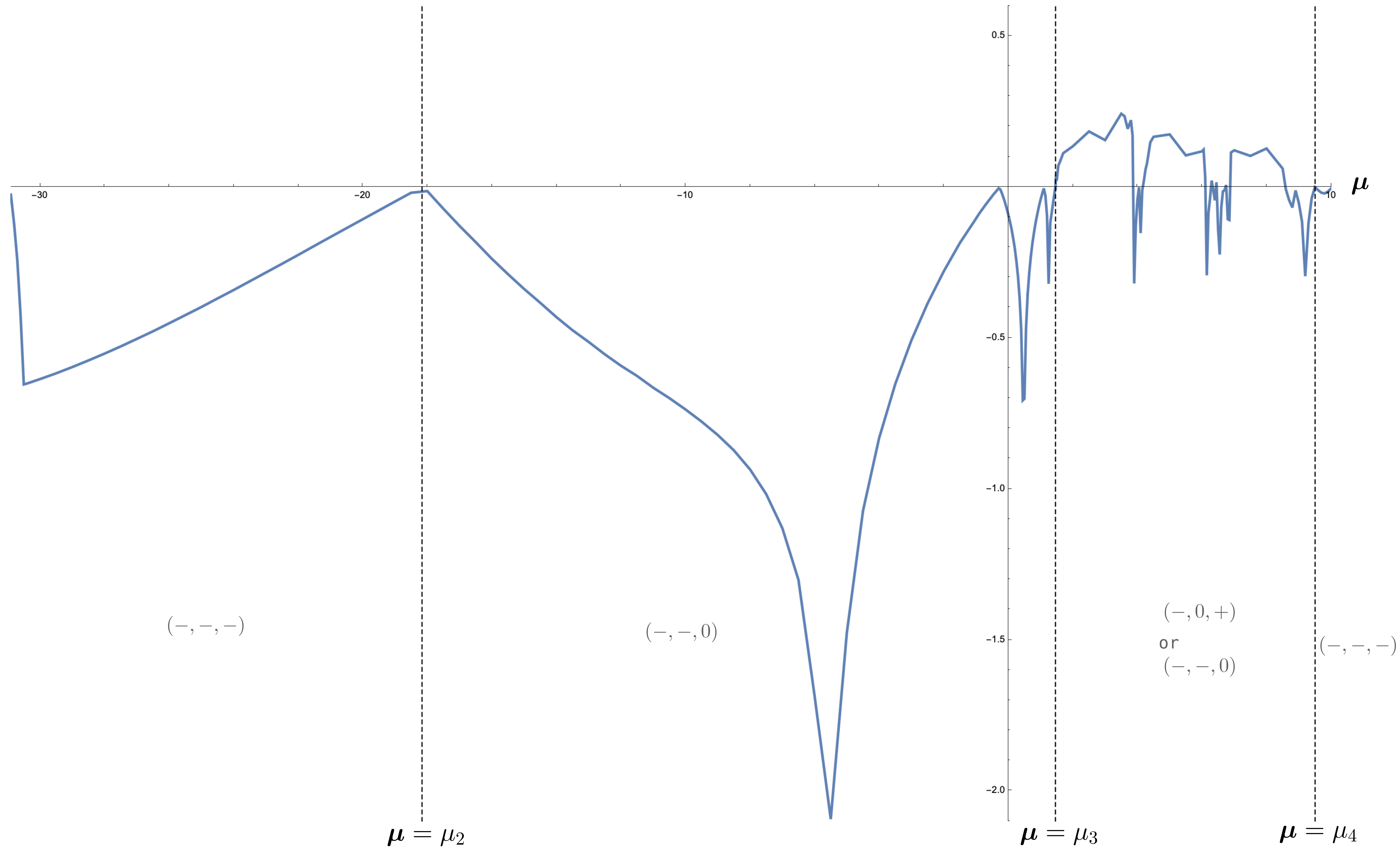}
    \caption{\small{\emph{Sign of the Lyapunov exponents:}  linear interpolation of the largest non-zero Lyapunov exponents of system~\eqref{eq:poly_rep_8} with initial condition near the interior equilibrium ($\notin W^s(\mathcal{O}_{\boldsymbol{\mu}})$).
$(-,-,-)$: all LE are negative. $(-,-,0)$: one LE is zero and the other are negative. $(-,0,+)$: one LE is negative, one is positive and the other is zero.}} \label{fig:LE}
\end{figure}

\subsection{Numerical facts}
\label{ss: numerical facts}
In this subsection, based on numerics, we list some evidences (hereafter called by \emph{Facts}) about system~\eqref{eq:poly_rep_8}. They are essential to characterise the \emph{route to chaos} in Section  \ref{s:route} and  
 may be numerically checked.

\begin{fact}\label{fact_e}
In the parameter interval $\boldsymbol{\mu} \in \,\,]-\frac{2938}{95}, 10[$: 
\begin{enumerate}
\item for  $\boldsymbol{\mu} < -8$ there exist two heteroclinic connections from $v_3$ and $v_6$ to $\mathcal{O}_{\boldsymbol{\mu}}$ (Figure~\ref{fig:heteroclinic_connections_v3_v6_O} (left));
\item for $\boldsymbol{\mu} \in\,]-8,  \boldsymbol{\mu}_{\textrm{SA}}[ \,\cup\, ]\boldsymbol{\mu}^2_{\textrm{Hopf}}, 10[$, there are two heteroclinic connections from the source $v_6$ to $\mathcal{O}_{\boldsymbol{\mu}}$, along the two branches of $W^s ( \mathcal{O}_{\boldsymbol{\mu}})$  (Figure~\ref{fig:heteroclinic_connections_v3_v6_O} (right));
\item for $\boldsymbol{\mu} \in\,] \boldsymbol{\mu}_{\textrm{SA}},  \boldsymbol{\mu}^2_{\textrm{Hopf}}[$, one branch of $W^s ( \mathcal{O}_{\boldsymbol{\mu}})$  winds around the non-wandering set associated to $W^u(\mathcal{O}_{\boldsymbol{\mu}})$.

 \end{enumerate}
\end{fact}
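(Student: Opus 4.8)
The plan is to reduce every asserted connection to a statement about the $\alpha$-limit of the branches of the stable manifold $W^s(\mathcal{O}_{\boldsymbol{\mu}})$, and to exploit that the two vertices in play are repellers with \emph{open} basins of repulsion, which is what makes the connections robust and persistent on whole parameter sub-intervals. I would start from the fully rigorous local data. Because the faces $\{x_i=0\}$, $\{x_i=1\}$ are flow-invariant (the argument of Lemma~\ref{Lemma3}), the Jacobian of~\eqref{eq:poly_rep_8} at any vertex is diagonal, each entry being one of the three bracketed factors evaluated at the vertex, signed according to whether the coordinate equals $0$ or $1$. Evaluating, at $v_6=(1,0,1)$ the entries are $6,6,31$, so $v_6$ is a source for every $\boldsymbol{\mu}$; at $v_3=(0,1,0)$ they are $-8-\boldsymbol{\mu},\,6,\,38$, so $v_3$ is a source precisely for $\boldsymbol{\mu}<-8$ and gains one stable, interior-transverse direction for $\boldsymbol{\mu}>-8$. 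The sign change at $\boldsymbol{\mu}=-8$ is exactly the transcritical bifurcation of $v_3$ with $A_4$ recorded in Lemma~\ref{Lemma4}.

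Combining this with Lemma~\ref{corollary1}, $\mathcal{O}_{\boldsymbol{\mu}}$ is a sink on $\,]-\tfrac{2938}{95},\boldsymbol{\mu}^1_{\textrm{Hopf}}[\,$ and on $\,]\boldsymbol{\mu}^2_{\textrm{Hopf}},10[\,$, and a saddle-focus with two-dimensional $W^u$ and one-dimensional (two-branched) $W^s$ in between. In the two sink windows I would first prove that $\mathcal{O}_{\boldsymbol{\mu}}$ attracts all of $\inter([0,1]^3)$: the boundary $\omega$-limits are excluded by a complete Poincaré--Bendixson analysis on each invariant face (planar, hence chaos-free), and interior periodic orbits should be ruled out by a Lyapunov/monotonicity functional adapted to the replicator structure. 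Global attractivity then immediately delivers the connections from the source(s) to $\mathcal{O}_{\boldsymbol{\mu}}$, covering the portions of (1) and (2) lying in these windows.

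On the saddle-focus window $\,]\boldsymbol{\mu}^1_{\textrm{Hopf}},\boldsymbol{\mu}^2_{\textrm{Hopf}}[\,$ the key observation is that a connection $[\,v\to\mathcal{O}_{\boldsymbol{\mu}}\,]$ is nothing but a branch of the one-dimensional $W^s(\mathcal{O}_{\boldsymbol{\mu}})$ whose backward orbit converges to $v$. Since a source has an open backward basin, ``a given branch enters the repelling basin of $v$'' is an open condition, hence robust in $\boldsymbol{\mu}$. I would therefore integrate the two branches of $W^s(\mathcal{O}_{\boldsymbol{\mu}})$ backward and show that, for $\boldsymbol{\mu}\in\,]\boldsymbol{\mu}^1_{\textrm{Hopf}},-8[\,$, one branch enters the basin of the source $v_3$ and the other that of $v_6$ --- the two connections of (1) --- while for $\boldsymbol{\mu}\in\,]-8,\boldsymbol{\mu}_{\textrm{SA}}[\,$, after $v_3$ has lost its repelling character at $\boldsymbol{\mu}=-8$, both branches fall into the basin of the surviving source $v_6$ --- the two connections of (2). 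Persistence over these intervals is then automatic from openness.

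The genuinely hard part is (3). For $\boldsymbol{\mu}\in\,]\boldsymbol{\mu}_{\textrm{SA}},\boldsymbol{\mu}^2_{\textrm{Hopf}}[\,$ the relevant branch of $W^s(\mathcal{O}_{\boldsymbol{\mu}})$ no longer lands in any repeller basin; traced backward it accumulates on and winds around the non-wandering set carried by the two-dimensional $W^u(\mathcal{O}_{\boldsymbol{\mu}})$. This is precisely the configuration approaching the Shilnikov homoclinic connection behind Theorem~\ref{thrm:main}, and tracking a one-dimensional invariant manifold as it threads a chaotic region, uniformly over an interval of parameters, lies beyond what the explicit but transcendental eigendata permit analytically. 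My proposal for (3) is thus validated continuation --- rigorous interval-arithmetic integration of $W^s(\mathcal{O}_{\boldsymbol{\mu}})$ and $W^u(\mathcal{O}_{\boldsymbol{\mu}})$ --- which is also the reason the assertion is stated as a numerically checkable Fact; compare Figure~\ref{fig:heteroclinic_connections_v3_v6_O}.
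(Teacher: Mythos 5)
You should first be aware that the paper does not prove this statement at all: Fact~\ref{fact_e} is one of the items in Subsection~\ref{ss: numerical facts} that the authors explicitly introduce as ``evidences \dots based on numerics'' and, in Subsection~\ref{ss: digestive}, state to have been ``numerically checked both in Mathematica and MATLAB.'' So there is no paper proof to compare against, and your proposal has to stand on its own as an attempted rigorization. The parts of it that are concrete are correct and consistent with the paper's data: the vertex Jacobians are diagonal by invariance of the faces, the spectra $\{6,6,31\}$ at $v_6$ and $\{6,38,-8-\boldsymbol{\mu}\}$ at $v_3$ match Table~\ref{tbl:Eigenv_of_vs}, and the loss of the repelling character of $v_3$ at $\boldsymbol{\mu}=-8$ is exactly the transcritical event (collision with $A_4$) that the paper itself invokes in the caption of Figure~\ref{fig:heteroclinic_connections_v3_v6_O} to explain the transition from item (1) to item (2).

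The gaps are in every step that actually carries the burden of the claim. First, global attractivity of $\mathcal{O}_{\boldsymbol{\mu}}$ in the two sink windows is asserted via ``a Lyapunov/monotonicity functional adapted to the replicator structure'' that you never exhibit; there is no reason to expect such a functional for this family, which is neither conservative nor competitive and which supports periodic orbits and (numerically) strange attractors at nearby parameters. The paper itself only supports this attractivity with Lyapunov-exponent computations. Second, in the saddle-focus window your openness/robustness argument is sound as far as it goes, but it only yields \emph{persistence} of a connection over a parameter interval once the connection is known to exist at a single parameter; the existence claim --- that a given backward branch of $W^s(\mathcal{O}_{\boldsymbol{\mu}})$ actually enters the repelling basin of $v_3$ or of $v_6$ --- is precisely the content of the Fact, and your ``I would integrate the two branches backward and show that\dots'' is a restatement of it, not an argument. (There is also a small mismatch you should note: on $\,]\boldsymbol{\mu}^2_{\textrm{Hopf}},10[\,$, which item (2) covers, $\mathcal{O}_{\boldsymbol{\mu}}$ is again a sink, so the phrase ``the two branches of $W^s(\mathcal{O}_{\boldsymbol{\mu}})$'' no longer refers to a one-dimensional manifold; this is an imprecision inherited from the statement itself.) Third, you honestly defer item (3) to validated continuation. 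The net effect is that your proposal, stripped of the correct but peripheral local analysis, reduces to the same numerical verification the paper performs; it is a reasonable roadmap for a computer-assisted proof, but it does not close any of the steps that the paper leaves open.
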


\begin{figure}[h]
	\includegraphics[width=6cm]{./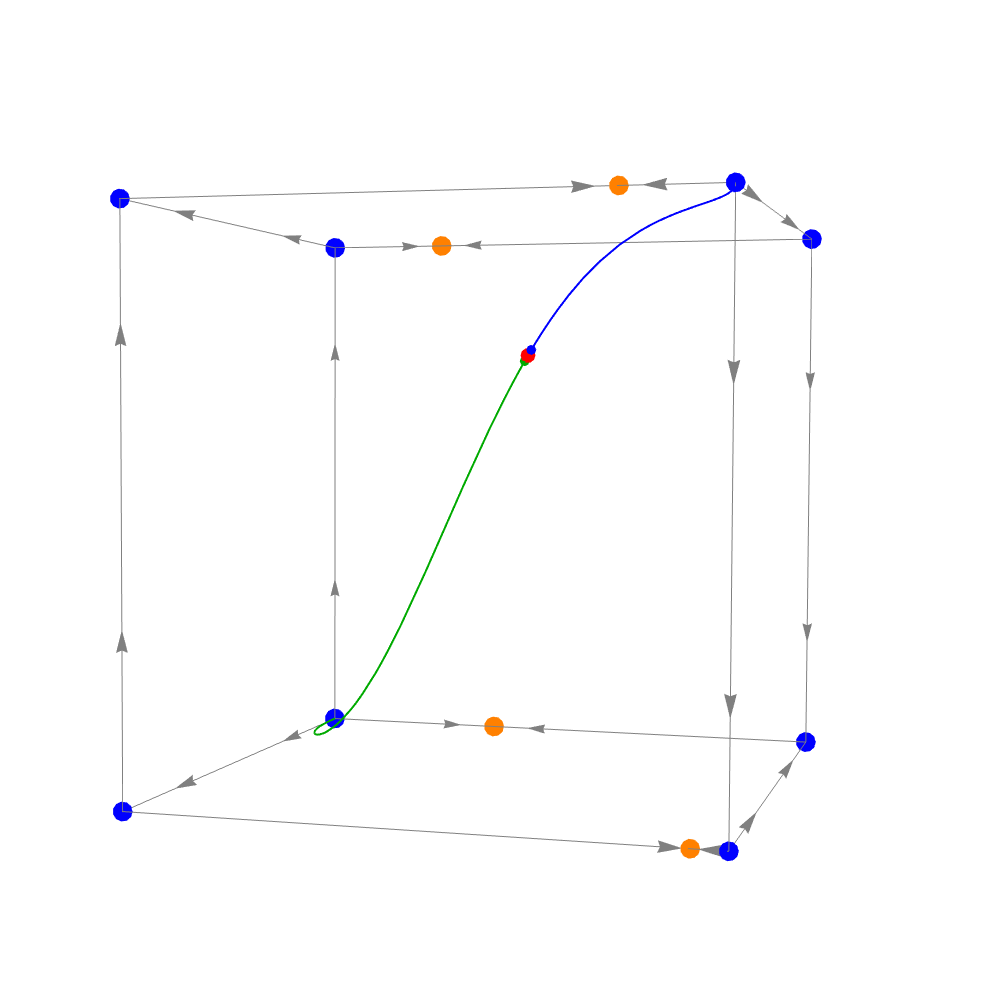}
\hspace{-1.2cm}
	\includegraphics[width=6.1cm]{./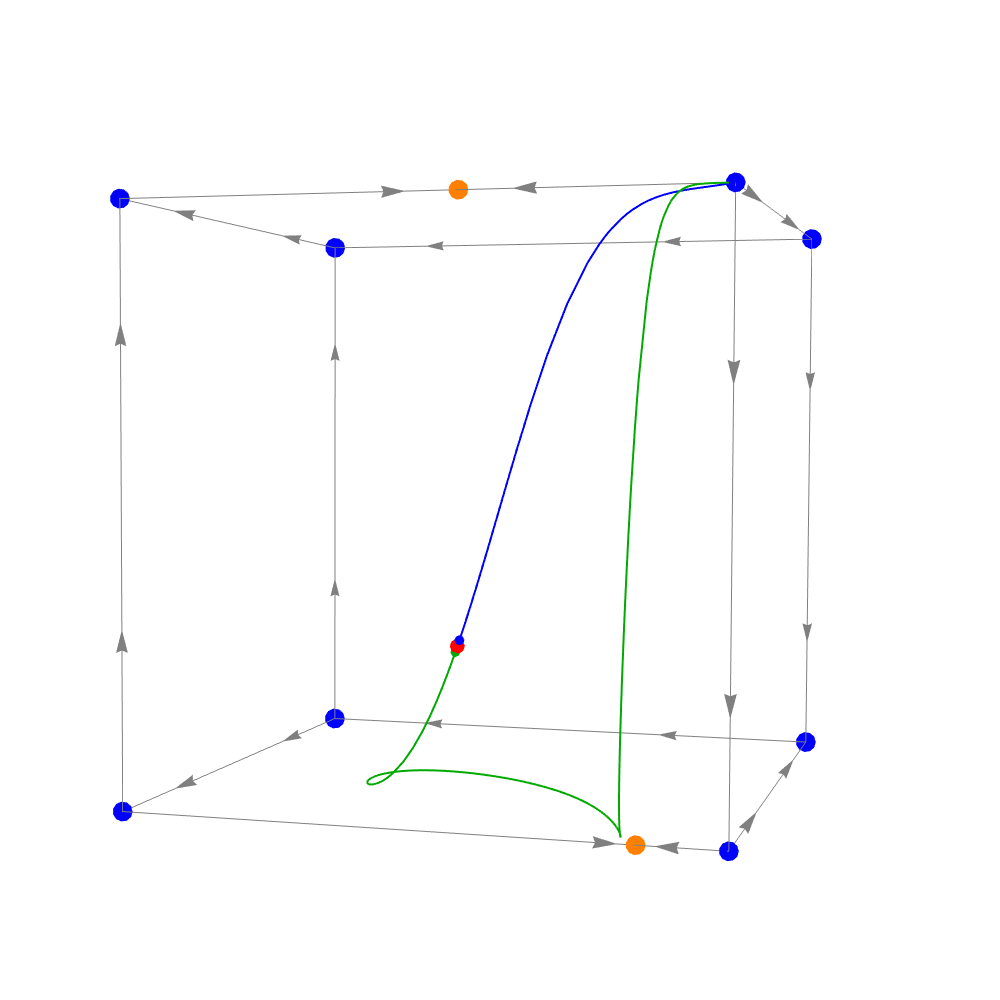}
	\vspace{-.5cm}
    \caption{\small{\textit{Heteroclinic connections (cf. Fact~\ref{fact_e}):} representation of the connections  from the sources  $v_3$ (in green) and $v_6$ (in blue) to $\mathcal{O}_{\boldsymbol{\mu}}$, for system~\eqref{eq:poly_rep_8} with $\boldsymbol{\mu}=-20$ (left) and $\boldsymbol{\mu}=0$ (right). At $\boldsymbol{\mu}=-8$ the equilibrium $A_4$ collapses with $v_3$, which makes the lower branch of $W^u(\mathcal{O}_{\boldsymbol{\mu}})$ to connect $v_6$, a phenomenon that persists for $\boldsymbol{\mu}>-8$.}} \label{fig:heteroclinic_connections_v3_v6_O}
\end{figure}

\begin{fact}\label{fact_g}
For  $\boldsymbol{\mu} \in \left] \boldsymbol{\mu}^1_{\textrm{Hopf}}, \boldsymbol{\mu}^2_{\textrm{Hopf}} \right[$ the eigenvalues of $Df_{\boldsymbol{\mu}} \left(\mathcal{O}_{\boldsymbol{\mu}} \right)$ are of the form
$$
\lambda_{\textrm{u}}(\boldsymbol{\mu} ) \pm i \omega(\boldsymbol{\mu} ) \quad \textrm{and}\quad -\lambda_{\textrm{s}}(\boldsymbol{\mu} ),
$$
where
$$
\lambda_{\textrm{u}}(\boldsymbol{\mu} ), \omega(\boldsymbol{\mu} ), \lambda_{\textrm{s}}(\boldsymbol{\mu} )>0,
\quad
2\lambda_{\textrm{u}} (\boldsymbol{\mu} )< \lambda_{\textrm{s}} (\boldsymbol{\mu} )
\quad
\textrm{ and }
\quad
\frac{d}{d\boldsymbol{\mu}}\left( \frac{\lambda_{u}(\boldsymbol{\mu})}{\lambda_{s}(\boldsymbol{\mu})} \right)\neq 0.
$$
\end{fact}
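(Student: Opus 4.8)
The plan is to reduce the statement to two genuinely quantitative conditions, since the qualitative spectral picture is already in hand. Indeed, that the spectrum of $Df_{\boldsymbol{\mu}}(\mathcal{O}_{\boldsymbol{\mu}})$ consists of a complex conjugate pair $\lambda_{\mathrm u}(\boldsymbol\mu)\pm i\omega(\boldsymbol\mu)$ with $\lambda_{\mathrm u},\omega>0$ together with a simple negative real eigenvalue $-\lambda_{\mathrm s}(\boldsymbol\mu)$, $\lambda_{\mathrm s}>0$, is precisely assertion $(3)$ of Lemma~\ref{corollary1} on the interval $\left]\mu_2,\mu_4\right[=\left]\boldsymbol{\mu}^1_{\mathrm{Hopf}},\boldsymbol{\mu}^2_{\mathrm{Hopf}}\right[$, so I would invoke it directly and focus on the inequality $2\lambda_{\mathrm u}<\lambda_{\mathrm s}$ and the non-degeneracy $\frac{d}{d\boldsymbol\mu}(\lambda_{\mathrm u}/\lambda_{\mathrm s})\neq0$.

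For the inequality, the key observation is that $2\lambda_{\mathrm u}-\lambda_{\mathrm s}$ is exactly the sum of the three eigenvalues, hence equals $\operatorname{tr}Df_{\boldsymbol{\mu}}(\mathcal{O}_{\boldsymbol{\mu}})$; thus $2\lambda_{\mathrm u}<\lambda_{\mathrm s}$ is equivalent to $\operatorname{tr}Df_{\boldsymbol{\mu}}(\mathcal{O}_{\boldsymbol{\mu}})<0$. This trace is completely explicit: writing each component of \eqref{eq:poly_rep_8} as $x_i(1-x_i)\,g_i$ and using that the bracket factors $g_i$ all vanish at the interior equilibrium, every diagonal entry of the Jacobian loses its $g_i$ term, so that, with $\mathcal{O}_{\boldsymbol{\mu}}=(x^\ast,y^\ast,z^\ast)$ from \eqref{int_equil},
\[
\operatorname{tr}Df_{\boldsymbol{\mu}}(\mathcal{O}_{\boldsymbol{\mu}})=(\boldsymbol\mu-14)\,x^\ast(1-x^\ast)+4\,y^\ast(1-y^\ast)-4\,z^\ast(1-z^\ast).
\]
Substituting the coordinates of $\mathcal{O}_{\boldsymbol{\mu}}$ turns this into a single rational function of $\boldsymbol\mu$, and I would finish by clearing the sign-definite denominator and checking that the resulting polynomial is negative throughout $\left]\boldsymbol{\mu}^1_{\mathrm{Hopf}},\boldsymbol{\mu}^2_{\mathrm{Hopf}}\right[$ — a one-variable sign verification that can be certified rigorously. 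Geometrically this negativity says the saddle-focus is volume-contracting, which is exactly what later allows the Shilnikov cycle to produce an \emph{attractor} in the sense of Mora and Viana rather than a merely nonattracting horseshoe.

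For the non-degeneracy condition I would exploit the same explicit structure: all entries of the Jacobian are rational in $\boldsymbol\mu$, so the coefficients $T=\operatorname{tr}$, $S$ (sum of $2\times2$ principal minors) and $D=\det$ of the characteristic polynomial $p$ are explicit rational functions, even though — as already noted before Lemma~\ref{corollary1} — the individual roots are not. Since $-\lambda_{\mathrm s}$ is the unique real root, one has $\lambda_{\mathrm u}=\tfrac12(T+\lambda_{\mathrm s})$ and hence $\lambda_{\mathrm u}/\lambda_{\mathrm s}=\tfrac12+T/(2\lambda_{\mathrm s})$; differentiating $p(-\lambda_{\mathrm s};\boldsymbol\mu)=0$ implicitly expresses $\lambda_{\mathrm s}'(\boldsymbol\mu)$, and therefore $\frac{d}{d\boldsymbol\mu}(\lambda_{\mathrm u}/\lambda_{\mathrm s})$, in terms of $T$, $T'$, $\lambda_{\mathrm s}$ and the partials of $p$. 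I would then verify numerically that this derivative keeps a constant sign, that is, that $\boldsymbol\mu\mapsto\lambda_{\mathrm u}/\lambda_{\mathrm s}$ is strictly monotone on the interval, which is exactly the statement that the saddle index varies nondegenerately so that $\boldsymbol\mu$ acts as a genuine unfolding parameter in the Shilnikov–Ovsyannikov / Mora–Viana mechanism.

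The main obstacle is this last monotonicity claim. The trace inequality is essentially analytic, collapsing to the sign of one explicit rational function, but $\lambda_{\mathrm u}/\lambda_{\mathrm s}$ is an algebraic function defined through the unique real root of a cubic with no tractable closed form, so certifying the sign of its $\boldsymbol\mu$-derivative cannot be reduced to an elementary computation. I expect to handle it either by interval-arithmetic enclosures of the real root along the interval, or by a resultant/Sturm argument applied to the numerator of $\frac{d}{d\boldsymbol\mu}(\lambda_{\mathrm u}/\lambda_{\mathrm s})$ after eliminating $\lambda_{\mathrm s}$, treating this monotonicity as the numerically checkable content of the present \emph{Fact}.
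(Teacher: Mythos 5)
The paper gives no analytic proof of this Fact: it is one of the numerical evidences of Subsection~\ref{ss: numerical facts}, verified by direct computation of the eigenvalues in \textit{Mathematica} and \textsc{Matlab} (see the Digestive remark, Subsection~\ref{ss: digestive}), with the qualitative spectral picture already recorded in Lemma~\ref{corollary1}. Your proposal is correct and takes a genuinely sharper route on one point: the identity $2\lambda_{\mathrm u}-\lambda_{\mathrm s}=\operatorname{tr}Df_{\boldsymbol{\mu}}(\mathcal{O}_{\boldsymbol{\mu}})$, combined with the (correct) observation that the bracket factors of \eqref{eq:poly_rep_8} vanish at the interior equilibrium so that
\[
\operatorname{tr}Df_{\boldsymbol{\mu}}(\mathcal{O}_{\boldsymbol{\mu}})=(\boldsymbol{\mu}-14)\,x^{*}(1-x^{*})+4\,y^{*}(1-y^{*})-4\,z^{*}(1-z^{*}),
\]
reduces the Shilnikov condition $2\lambda_{\mathrm u}<\lambda_{\mathrm s}$ to the sign of a single explicit rational function of $\boldsymbol{\mu}$, which can be certified exactly over the whole interval (for instance at $\boldsymbol{\mu}=0$ the trace is approximately $-3.69$); this is more than the paper's eigenvalue plots deliver, and it also makes transparent the volume contraction needed for the Mora--Viana attractor. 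For the non-degeneracy of the saddle index $\lambda_{\mathrm u}/\lambda_{\mathrm s}$ you end up in essentially the same position as the authors: the quantity is an algebraic function of $\boldsymbol{\mu}$ through the unique real root of the characteristic cubic, and your resultant/Sturm or interval-arithmetic scheme is a legitimate, in principle rigorous, replacement for the paper's bare numerical assertion. The one caveat is that your appeal to item~(3) of Lemma~\ref{corollary1} for the qualitative spectral structure inherits that lemma's own reliance on numerically computed root loci (Figures~\ref{fig:Egvls_int_eq_func_mu_1} and~\ref{fig:Egvls_int_eq_func_mu_2}), so the overall epistemic status of the Fact is not upgraded to a full proof; but your decomposition into one exactly checkable rational inequality plus one genuinely numerical monotonicity claim is cleaner and more auditable than what the paper provides.
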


\begin{fact}\label{hom}
There are non-trivial subintervals of  $  \left]\mu_3, \boldsymbol{\mu}^2_{\textrm{Hopf}} \right[$ where $f_{\boldsymbol{\mu}}$ has one positive LE 
(Figure \ref{fig:LE}).
\end{fact}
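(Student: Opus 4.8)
The plan is to establish this Fact by direct numerical computation of the Lyapunov spectrum of $f_{\boldsymbol{\mu}}$ as $\boldsymbol{\mu}$ scans the interval $\left]\mu_3, \boldsymbol{\mu}^2_{\textrm{Hopf}}\right[$, following the algorithm of Sandri~\cite{sandri1996numerical}. First I would augment system~\eqref{eq:poly_rep_8} with its variational (tangent) equation $\dot v = Df_{\boldsymbol{\mu}}(\phi^t(u_0))\, v$ and integrate the coupled system from an initial condition $(x_0,y_0,z_0)$ chosen off the stable manifold $W^s(\mathcal{O}_{\boldsymbol{\mu}})$, exactly as prescribed in Subsection~\ref{ss:LE1}. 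Since $(x_0,y_0,z_0)\notin W^s(\mathcal{O}_{\boldsymbol{\mu}})$, the forward orbit is driven by $W^u(\mathcal{O}_{\boldsymbol{\mu}})$ and samples the candidate attractor; periodic Gram--Schmidt reorthonormalisation of a basis of tangent vectors then yields the three exponents $LE_1\geq LE_2\geq LE_3$ as time averages of the logarithmic growth factors.

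The core step is to detect the parameter values at which the largest exponent $LE_1$ becomes positive. Because a Lyapunov exponent is a limit as $t\to+\infty$ and any simulation truncates it, $LE_1$ oscillates about $0$ whenever the orbit is quasiperiodic or only weakly chaotic; to separate genuine positivity from numerical noise I would, as in the text, declare $LE_1$ positive only when it exceeds the fixed tolerance $5\times 10^{-3}$. Scanning $\boldsymbol{\mu}$ on a fine grid and recording the sign pattern of the spectrum reproduces the interpolated curves of Figure~\ref{fig:LE}: below $\boldsymbol{\mu}^1_{\textrm{Hopf}}$ and above $\boldsymbol{\mu}^2_{\textrm{Hopf}}$ all three exponents are negative, on $\left]\boldsymbol{\mu}^1_{\textrm{Hopf}},\mu_3\right[$ the top exponent vanishes (a periodic attractor), and on $\left]\mu_3,\boldsymbol{\mu}^2_{\textrm{Hopf}}\right[$ there appear windows where $LE_1>5\times 10^{-3}$. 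Defining $\mu_3\mapsto\boldsymbol{\mu}_{\textrm{SA}}$ as the threshold above which such windows first occur makes the assertion essentially a restatement of the scan.

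To upgrade isolated positive readings into \emph{non-trivial subintervals}, I would invoke continuity of the Lyapunov spectrum in the parameter: away from bifurcation values the exponents vary continuously with $\boldsymbol{\mu}$, so any single value at which $LE_1$ lies safely above the tolerance is surrounded by an open set sharing that property. Refining the grid inside each detected window then confirms that $LE_1$ stays above threshold on sets of positive length rather than at isolated points, which is precisely the claim; care is needed only near the endpoint $\boldsymbol{\mu}^2_{\textrm{Hopf}}$, where the newly stabilising focus makes the exponent fragile.

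The main obstacle is conceptual rather than computational: a positive truncated exponent is evidence, not a rigorous certificate, of hyperbolic chaotic behaviour. This is exactly why the paper does not rest the existence of a genuine strange attractor on this Fact alone; instead the numerical windows located here serve to pin down the sub-intervals where the Shilnikov homoclinic mechanism (established later via~\cite{shilnikov1965case, ovsyannikov1986systems, mora1993abundance}) forces persistent strange attractors for a $C^2$-nearby family. The role of Fact~\ref{hom} is therefore to identify the parameters of interest, with the rigorous content deferred to the Shilnikov analysis of Section~\ref{s: second part}.
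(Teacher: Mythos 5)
Your proposal is correct and follows essentially the same route as the paper: Fact~\ref{hom} is supported there purely by the numerical Lyapunov-exponent scan of Subsection~\ref{ss:LE1} (Sandri's algorithm, the prescribed initial condition off $W^s(\mathcal{O}_{\boldsymbol{\mu}})$, the $5\times 10^{-3}$ tolerance, and the interpolated curves of Figure~\ref{fig:LE}), with the rigorous existence of strange attractors deferred to the Shilnikov argument of Section~\ref{s: second part}, exactly as you describe. Your added appeal to continuity of the spectrum in $\boldsymbol{\mu}$ is only a heuristic (Lyapunov exponents need not vary continuously), but it plays the same role as the paper's grid refinement and does not change the substance.
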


\begin{figure}[h]
\includegraphics[width=12cm]{./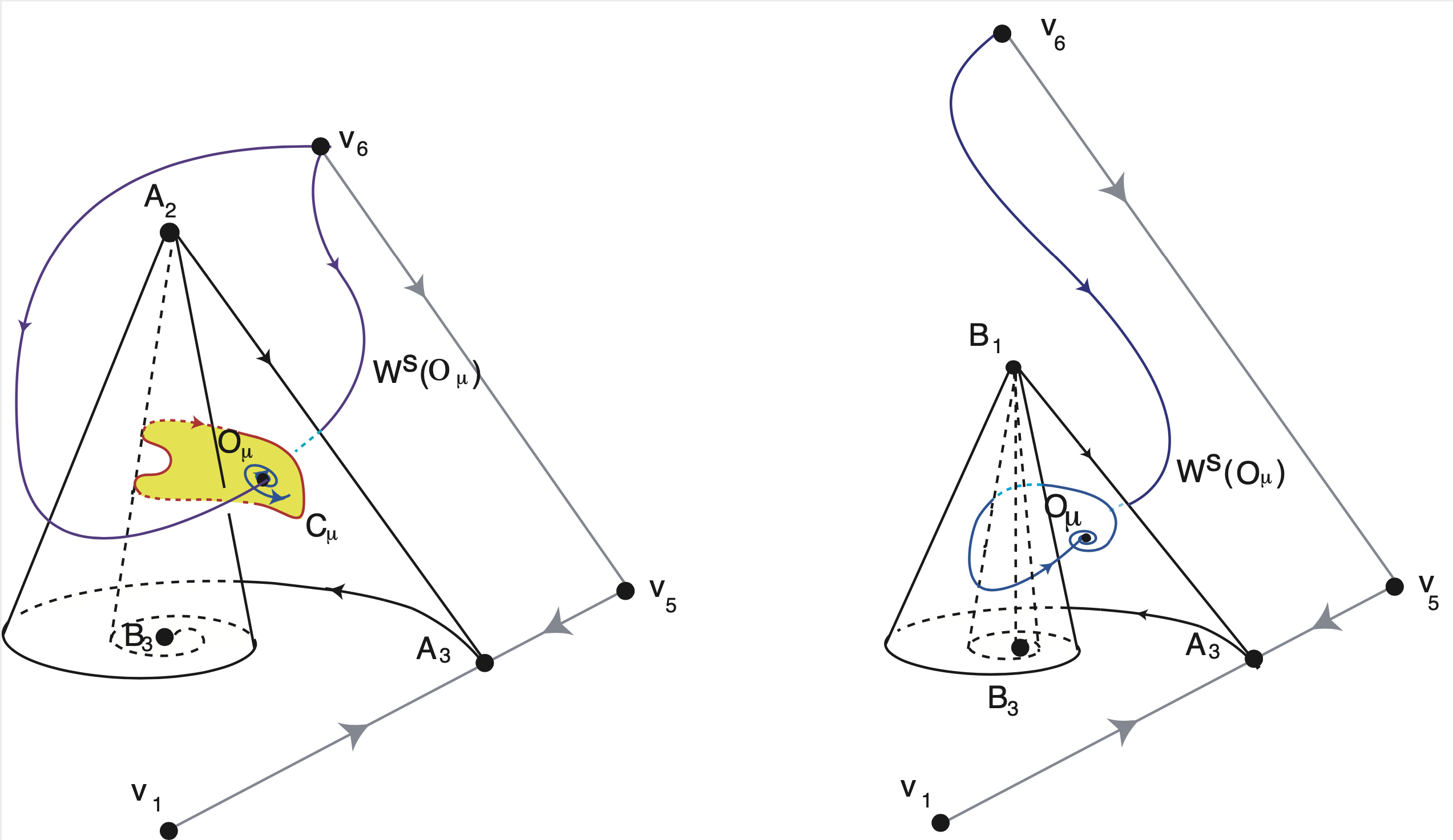}
\caption{\small{\emph{The attracting whirl\-pool
(see Proposition \ref{prop:central_manfld_VI})
:} illustration of Fact~\ref{fact_g} (left) and Fact~\ref{hom} (right).
For the values of $\boldsymbol{\mu}  $ for which the greatest LE of $f_{\boldsymbol{\mu}}$ is positive, there exists a vector field   close to $f_{\boldsymbol{\mu}}$ whose flow exhibits a homoclinic orbit to $\mathcal{O}_{\boldsymbol{\mu}}$.
A  description of the attracting whirl\-pool is given in detail in Subsection \ref{subsec:att_whirlpool}.}} \label{fig:shilnikov_cycle}
\end{figure}

\begin{fact}\label{Ws(B_1)}
For    $\boldsymbol{\mu} \in \,\,]\frac{110}{31}, 8[$ , we have $[v_5 \rightarrow v_6]\subset \overline{W^s(B_1)}$.
\end{fact}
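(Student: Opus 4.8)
The plan is to reduce the assertion to the geometry on the invariant face $\sigma_4=\{y=0\}$ together with one genuinely three-dimensional connection, and then to invoke the inclination lemma. By Lemma~\ref{Lemma3} the face $\sigma_4$ (where $x_4=y=0$ in the coordinates of~\eqref{eq:poly_rep_8}) is flow-invariant and contains $v_5=(1,0,0)$, $v_6=(1,0,1)$ and $B_1$; on it the system reduces to the planar field $\dot x=x(1-x)(12-\boldsymbol{\mu}+(\boldsymbol{\mu}-14)x-4z)$, $\dot z=z(1-z)(27-54x-4z)$. First I would record the linear data (cf.\ Tables~\ref{tbl:Eigenv_of_vs} and~\ref{tbl:Eigenv_of_Bs}): the eigenvalues of $Df_{\boldsymbol{\mu}}(v_5)$ are $2,10,-27$ along the $x$-, $y$- and $z$-axes, so $v_5$ is a saddle whose one-dimensional stable manifold is the edge joining $v_5$ and $v_6$; since $\dot z=z(1-z)(-27-4z)<0$ along that edge, this edge carries the orbit from $v_6$ to $v_5$ and is precisely the connection $[v_5\to v_6]=W^{s}(v_5)$. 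At $B_1$ the Jacobian of the planar field has determinant $-4ab(\boldsymbol{\mu}+40)<0$, where $a=x_{B_1}(1-x_{B_1})>0$ and $b=z_{B_1}(1-z_{B_1})>0$, so $B_1$ is a planar saddle; its transverse eigenvalue in the $y$-direction is $\tfrac{37(\boldsymbol{\mu}-10)}{40+\boldsymbol{\mu}}<0$ throughout $]\tfrac{110}{31},8[$. Hence in $\Rr^3$ the point $B_1$ carries a two-dimensional stable manifold $W^{s}(B_1)$ and a one-dimensional unstable manifold.

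The heart of the matter is to prove that $W^{s}(B_1)$ accumulates onto $W^{s}(v_5)$. This cannot be seen inside $\sigma_4$: there the unstable separatrix of $v_5$ runs along the bottom edge $\{z=0\}$ into the sink $A_3$, so the accumulation is a three-dimensional effect. The ingredient I would establish is a \emph{transverse} heteroclinic connection $[v_5\to B_1]$. The two-dimensional unstable manifold $W^{u}(v_5)$ (spanned by the $x$- and $y$-eigendirections, eigenvalues $2$ and $10$) leaves $\sigma_4$ into $\{y>0\}$, and since near $B_1$ one has $\dot y<0$ (the $y$-eigenvalue is negative), an orbit of $W^{u}(v_5)$ can return to the face and land on $B_1$; I would check numerically that $W^{u}(v_5)$ meets the two-dimensional $W^{s}(B_1)$ transversally along a connecting orbit $\gamma$. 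This transverse connection is the numerically checkable content of the Fact.

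Granting $\gamma\subset W^{u}(v_5)\cap W^{s}(B_1)$, the inclination ($\lambda$-)lemma at the saddle $v_5$ closes the argument. Transversality produces, at a point $q\in\gamma$, a curve $c\subset W^{s}(B_1)$ through $q$ of dimension $\dim W^{s}(v_5)=1$ transverse to $W^{u}(v_5)$; the backward-time form of the lemma then forces the negative flow of $c$ to $C^{1}$-accumulate on $W^{s}_{\loc}(v_5)$. As $W^{s}(B_1)$ is flow-invariant, so is $\overline{W^{s}(B_1)}$, whence $W^{s}_{\loc}(v_5)\subset\overline{W^{s}(B_1)}$; flowing backward spreads this inclusion over the whole orbit $W^{s}(v_5)$, giving $[v_5\to v_6]\subset\overline{W^{s}(B_1)}$. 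The parameter window is exactly what makes every step available: $\boldsymbol{\mu}>\tfrac{110}{31}$ is needed for $B_1$ to lie in the cube as a saddle with two-dimensional stable manifold (Lemma~\ref{Lemma4}), while $\boldsymbol{\mu}<8$ keeps $A_2$ inside the cube and the connection $[v_5\to B_1]$ in place---at $\boldsymbol{\mu}=8$ the equilibrium $A_2$ collapses onto $v_2$ and the phase portrait on $\sigma_4$ reorganizes.

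The main obstacle is the middle step. The orbit $\gamma$ is a global object, invisible to the planar reduction and dependent on the full three-dimensional return, so---consistently with the status of a Fact---I would certify its existence numerically by continuing $W^{u}(v_5)$ and $W^{s}(B_1)$ and locating their intersection, together with the (generic) transversality of two surfaces in $\Rr^3$. Once $\gamma$ is in hand, everything downstream is rigorous via the inclination lemma.
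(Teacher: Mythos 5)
First, a point of reference: the paper does not prove this statement at all. It is one of the ``Facts'' of Subsection~\ref{ss: numerical facts}, which the authors explicitly describe as numerical evidence checked in \emph{Mathematica} and \textsc{Matlab} (see Subsection~\ref{ss: digestive}); there is no argument in the text to compare yours against. Your ambition to reduce the Fact to one checkable transversality condition plus the inclination lemma is therefore a reasonable one, and your preparatory computations are correct and consistent with the paper's tables: the eigenvalues $2,10,-27$ of $Df_{\boldsymbol{\mu}}(v_5)$ along the coordinate axes, the determinant $-4ab(\boldsymbol{\mu}+40)<0$ of the planar Jacobian at $B_1$, the negative transverse eigenvalue $\tfrac{37(\boldsymbol{\mu}-10)}{40+\boldsymbol{\mu}}$, and the identification of the edge (which carries the orbit from the source $v_6$ to $v_5$) with $W^s(v_5)$.

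The gap is in the middle step, and it is not merely unverified --- it is impossible. You propose a heteroclinic connection $[v_5\to B_1]$ inside $W^u(v_5)\cap W^s(B_1)$ and say that $W^u(v_5)$ ``leaves $\sigma_4$ into $\{y>0\}$'' and can ``return to the face and land on $B_1$.'' But the two unstable eigendirections of $v_5$ (the $x$- and $y$-directions, eigenvalues $2$ and $10$) span exactly the tangent plane of the face $\sigma_6=\{z=0\}$, which is flow-invariant by Lemma~\ref{Lemma3} and on which $v_5$ is a source; by uniqueness of the unstable manifold, a full neighbourhood of $v_5$ in $\sigma_6$ coincides with $W^u_{\loc}(v_5)$, and invariance of $\sigma_6$ then forces $W^u(v_5)\subset\{z=0\}$ globally. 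Since $z_{B_1}=\tfrac{27(10-\boldsymbol{\mu})}{4(40+\boldsymbol{\mu})}>0$ throughout $]\tfrac{110}{31},8[$, no orbit of $W^u(v_5)$ can have $\omega$-limit $B_1$ (such orbits end up at $A_3$, $B_3$ or $v_7$ inside $\sigma_6$). Hence $W^u(v_5)\cap W^s(B_1)=\emptyset$, the numerical search you propose would return nothing, and the inclination lemma has no transverse intersection to act on. The accumulation of $W^s(B_1)$ onto the edge must be produced by a different mechanism --- the two-dimensional surface $W^s(B_1)$, followed backward in time, folds into the corner along the normally repelling edge $v_5v_6$ while its orbits backward-converge to the source $v_6$ --- and your reduction does not capture this; the Fact remains, as in the paper, a numerical observation.
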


Let $I\subset \left]\boldsymbol{\mu}_{\textrm{SA}}, \boldsymbol{\mu}^2_{\textrm{Hopf}} \right[$ be a non-degenerate interval of $\boldsymbol{\mu}$-values 
 for which the greatest LE of $f_{\boldsymbol{\mu}}$ is positive.
  
\begin{fact}\label{hom2}
For $\boldsymbol{\mu} \in I \subset \left]\boldsymbol{\mu}_{\textrm{SA}}, \boldsymbol{\mu}^2_{\textrm{Hopf}} \right[$, there exists a $C^2$--vector field $g\in \mathcal{X}$ arbitrarily close to $f_{\boldsymbol{\mu}}$ (in the $C^2$-topology), whose flow exhibits a homoclinic orbit to the hyperbolic continuation of $\mathcal{O}_{\boldsymbol{\mu}}$ (Figure \ref{fig:stbl_unstbl_O_mu=3.6}).\\
\end{fact}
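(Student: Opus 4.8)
The plan is to produce the homoclinic orbit by a localised $C^2$-small perturbation that glues together two invariant manifolds of $\mathcal{O}_{\boldsymbol{\mu}}$ which are already known to come arbitrarily close to one another. First I would record the local structure of the equilibrium. By Fact~\ref{fact_g}, for $\boldsymbol{\mu}\in\,]\boldsymbol{\mu}^1_{\textrm{Hopf}},\boldsymbol{\mu}^2_{\textrm{Hopf}}[\,\supset I$ the point $\mathcal{O}_{\boldsymbol{\mu}}$ is a hyperbolic saddle-focus, with eigenvalues $\lambda_{\textrm{u}}\pm i\omega$ ($\lambda_{\textrm{u}}>0$) and $-\lambda_{\textrm{s}}$ ($\lambda_{\textrm{s}}>0$); hence its unstable manifold $W^u(\mathcal{O}_{\boldsymbol{\mu}})$ is two-dimensional (trajectories spiral outward) and its stable manifold $W^s(\mathcal{O}_{\boldsymbol{\mu}})$ is one-dimensional. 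Since $\mathcal{O}_{\boldsymbol{\mu}}$ is hyperbolic, it admits a hyperbolic continuation for every field in a $C^2$-neighbourhood, with invariant manifolds depending continuously on the field; this is what gives meaning to ``the hyperbolic continuation of $\mathcal{O}_{\boldsymbol{\mu}}$'' in the statement.

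Next I would exhibit the geometric closeness of $W^u$ and $W^s$. For $\boldsymbol{\mu}\in I$ the largest Lyapunov exponent is positive, so the maximal attracting set inside the cube is a strange attractor $\Lambda_{\boldsymbol{\mu}}$ that essentially coincides with $\overline{W^u(\mathcal{O}_{\boldsymbol{\mu}})}$; as the unstable manifold emanates from the equilibrium, $\mathcal{O}_{\boldsymbol{\mu}}\in\Lambda_{\boldsymbol{\mu}}$. By Fact~\ref{fact_e}(3) the relevant branch of $W^s(\mathcal{O}_{\boldsymbol{\mu}})$, followed backward in time, winds around the non-wandering set associated with $W^u(\mathcal{O}_{\boldsymbol{\mu}})$, i.e. it remains in every neighbourhood of $\Lambda_{\boldsymbol{\mu}}=\overline{W^u(\mathcal{O}_{\boldsymbol{\mu}})}$. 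Combining these observations with the density of $W^u(\mathcal{O}_{\boldsymbol{\mu}})$ in its own closure, for each $\varepsilon>0$ there exist $q\in W^s(\mathcal{O}_{\boldsymbol{\mu}})$ and $p\in W^u(\mathcal{O}_{\boldsymbol{\mu}})$ with $\|p-q\|<\varepsilon$, and the whole configuration sits in $\inter([0,1]^3)$.

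I would then carry out the connecting perturbation on a cross-section. Fix a section $\Sigma$ transverse to the flow, contained in $\inter([0,1]^3)$ and disjoint from $\mathcal{O}_{\boldsymbol{\mu}}$, on which $W^u(\mathcal{O}_{\boldsymbol{\mu}})\cap\Sigma$ is a spiralling curve and the winding branch of $W^s(\mathcal{O}_{\boldsymbol{\mu}})$ meets $\Sigma$ in points accumulating on that curve; by the previous paragraph one such crossing point $q^{\ast}$ lies within $\varepsilon$ of the curve $W^u\cap\Sigma$. I would introduce a one-parameter family $(g_s)_{s\in[-\delta,\delta]}$ of $C^2$-small perturbations of $f_{\boldsymbol{\mu}}$, each supported in a small flow-box around $q^{\ast}$ chosen disjoint from $\mathcal{O}_{\boldsymbol{\mu}}$ and from $\partial[0,1]^3$. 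Keeping the support away from the faces guarantees $g_s\in\mathcal{X}$ (the faces stay invariant exactly as in Lemma~\ref{Lemma3}), and keeping it away from $\mathcal{O}_{\boldsymbol{\mu}}$ leaves the equilibrium and its linearisation untouched, so $\mathcal{O}_{\boldsymbol{\mu}}$ is its own hyperbolic continuation. The bump is designed to translate the $\Sigma$-trace of the winding branch of $W^s$ transversally to the curve $W^u\cap\Sigma$; the signed distance between them varies continuously with $s$ and changes sign across $[-\delta,\delta]$, so the intermediate value theorem yields $s_0$ with $W^s(\mathcal{O}_{\boldsymbol{\mu}}^{g_{s_0}})\cap W^u(\mathcal{O}_{\boldsymbol{\mu}}^{g_{s_0}})\neq\emptyset$. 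Setting $g:=g_{s_0}$ produces the required $C^2$-small field with a homoclinic orbit to the hyperbolic continuation of $\mathcal{O}_{\boldsymbol{\mu}}$.

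The main obstacle is this last step, and it is genuinely a codimension-one matter: on the two-dimensional section one has $\dim(W^u\cap\Sigma)+\dim(W^s\cap\Sigma)=1+0<2$, so the connection cannot arise from generic transversality and must be forced by tuning exactly one parameter — which is why the conclusion asserts the existence of a single nearby field rather than an open condition. The delicate point is not the closeness itself (supplied by Fact~\ref{fact_e}(3) together with the positivity of the Lyapunov exponent) but verifying that the winding branch of $W^s$ genuinely \emph{sweeps across} $W^u\cap\Sigma$ as $s$ varies, rather than merely grazing it; this is what legitimises the sign change in the intermediate value argument. Once the homoclinic orbit is in hand, the saddle-focus inequalities $2\lambda_{\textrm{u}}<\lambda_{\textrm{s}}$ of Fact~\ref{fact_g} are exactly what allows one to invoke the Shilnikov--Ovsyannikov and Mora--Viana machinery in the sequel.
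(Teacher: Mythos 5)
Your argument is sound at the level of rigour this statement admits, but it is not the paper's argument. The paper justifies Fact~\ref{hom2} in Subsection~\ref{ss: digestive} by a different (and explicitly heuristic) chain: the positive Lyapunov exponent of Fact~\ref{hom} is read, via Katok's theorem, as positive topological entropy and hence as the presence of transverse homoclinic points and horseshoes; since for $\boldsymbol{\mu}<\boldsymbol{\mu}_{\textrm{SA}}$ the only nontrivial compact invariant sets in $\inter([0,1]^3)$ are $\mathcal{O}_{\boldsymbol{\mu}}$ and $\mathcal{C}_{\boldsymbol{\mu}}$, the horseshoes appearing past $\boldsymbol{\mu}_{\textrm{SA}}$ ``can only'' be realized through a homoclinic cycle to the hyperbolic continuation of $\mathcal{O}_{\boldsymbol{\mu}}$, for $f_{\boldsymbol{\mu}}$ or a field nearby. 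Your route instead takes the geometric closeness of $W^s(\mathcal{O}_{\boldsymbol{\mu}})$ and $W^u(\mathcal{O}_{\boldsymbol{\mu}})$ as input and manufactures $g$ by an explicit connecting surgery (localised bump on a cross-section plus the intermediate value theorem). This buys something the paper does not have: a concrete mechanism producing $g$, a transparent reason why only a nearby field is claimed (the connection is codimension one on $\Sigma$), and an explicit check that $g\in\mathcal{X}$ and that $\mathcal{O}_{\boldsymbol{\mu}}$ is its own hyperbolic continuation. What it loses is the direct logical link to Fact~\ref{hom}: your key premise --- that the winding branch of $W^s(\mathcal{O}_{\boldsymbol{\mu}})$ comes \emph{arbitrarily} close to $W^u(\mathcal{O}_{\boldsymbol{\mu}})$, not merely close --- is exactly as numerical as the paper's premises (it is asserted in Subsection~\ref{subsec:att_whirlpool} and is consistent with the MatCont computations reported in Subsection~\ref{ss: open}, which find the manifolds close but not touching), and Fact~\ref{fact_e}(3) alone (``winds around the non-wandering set'') does not quantify that distance; without arbitrary closeness your bump cannot be made arbitrarily $C^2$-small. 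The surgery also needs the usual housekeeping you only gesture at: the support of the bump must avoid the backward orbit of the chosen point of $W^u(\mathcal{O}_{\boldsymbol{\mu}})$ and all but the first passage of the forward orbit of the chosen point of $W^s(\mathcal{O}_{\boldsymbol{\mu}})$, so that the deflected orbit genuinely lies in $W^u\cap W^s$ for $g$. Since the paper itself labels its justification heuristic and lists the existence of the homoclinic orbit as an open problem, neither argument is a complete proof; yours is, if anything, the more constructive of the two.
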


 \subsection{Digestive remark}
 \label{ss: digestive}

Facts \ref{fact_e}--\ref{hom2} have been numerically checked  both in   $\textit{Mathematica Wolfram}^{\circledR}$ and $\textrm{MATLAB}^{\circledR}$.

We give a heuristic justification why Fact \ref{hom2} may be considered a consequence of Fact \ref{hom}.  The existence of a positive Lyapunov exponent for $f_ {\boldsymbol{\mu}}$ (Fact \ref{hom}) implies the existence of an invariant subset of $\RR^3$ with positive topological entropy  (Corollary 4.3 of \cite{katok1980lyapunov}), leading to the existence of a transverse homoclinic point and horseshoes.  
Besides the equilibrium $\mathcal{O}_{\boldsymbol{\mu}}$ and the attracting periodic solution $\mathcal{C}_{\boldsymbol{\mu}}$, the flow of  \eqref{eq:poly_rep_8} does not have more nontrivial compact invariant sets in $\inter([0,1]^3)$ for $\mu<\boldsymbol{\mu}_{SA}$. Therefore, the only way  to realize these horseshoes is through a homoclinic cycle to the hyperbolic continuation of $\mathcal{O}_{\boldsymbol{\mu}}$ for $\mu>\boldsymbol{\mu}_{SA}$ (Fact \ref{hom2}). This agrees with numerics.

\begin{figure}[h]
    \includegraphics[width=10cm]{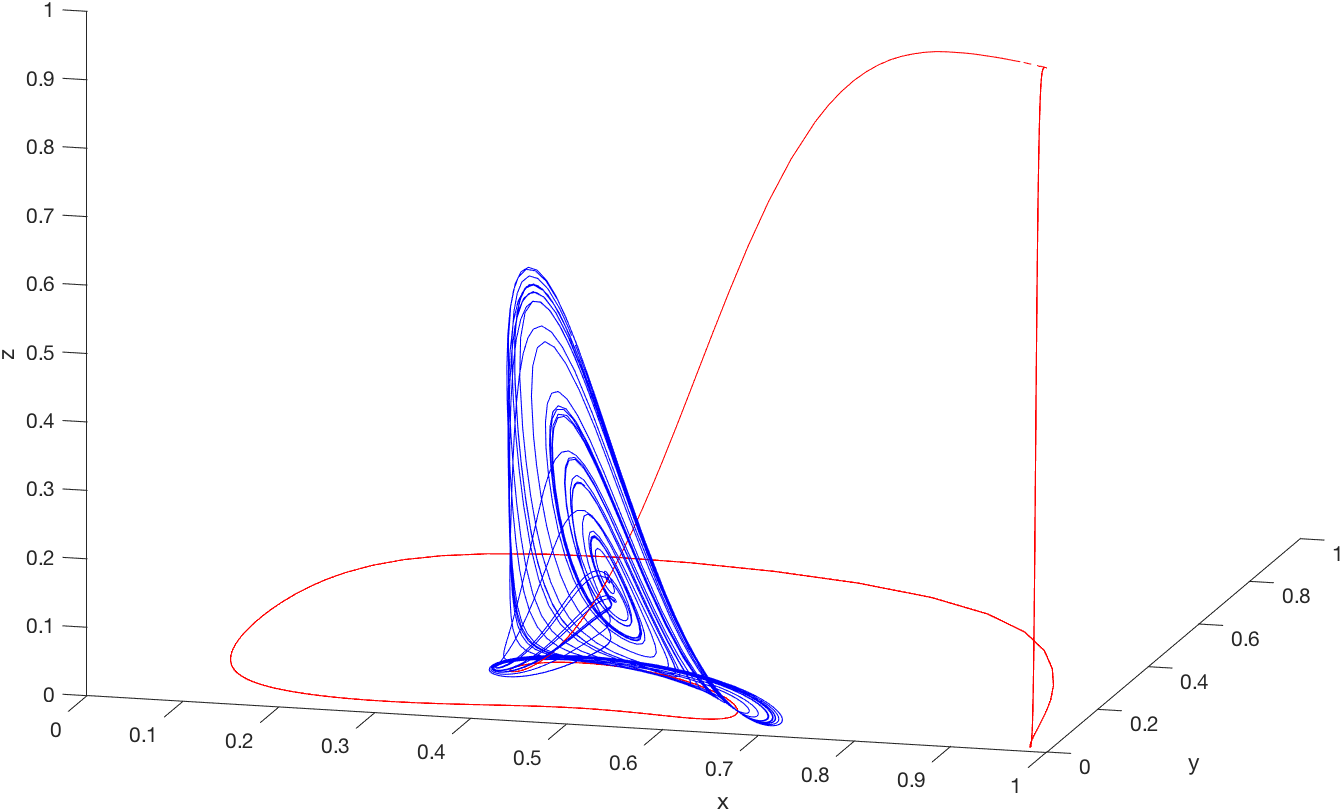}
    \caption{\small{The plot of the stable (in red) and the unstable (in blue) manifolds of $\mathcal{O}_{\boldsymbol{\mu}}$, for system~\eqref{eq:poly_rep_8} with $\boldsymbol{\mu}=3.6$.
}}\label{fig:stbl_unstbl_O_mu=3.6}
\end{figure}

\section{Route to strange attractors}
\label{s:route}

Using the same type of arguments of \cite{hofbauer1998evolutionary, gaunersdorfer1995fictitious}, we explain the global dynamics of system \eqref{eq:poly_rep_8} in $\inter\left( [0,1]^3 \right)$ according to the local bifurcations  studied in Section \ref{s: bifurcation analysis}. 
Based on the transcritical bifurcations of the equilibria on the boundary and the bifurcations of $\mathcal{O}_{\boldsymbol{\mu}}$, we   distinguish the  seven cases described in Table~\ref{tbl:cases} and we make use of the \emph{Facts} stated in Section \ref{s: numerical analysis}, to prove the existence of chaos. 
We also denote by $\FF$ the union of all faces, i.e. $\FF =\{\sigma_1,\dots ,\sigma_6\}$.

\begin{table}[h]
\begin{small}
\begin{tabular}{|c|c|c|c|c|}\toprule
\multicolumn{2}{|c|} {Case}& \multicolumn{3}{c|}{Interval of $\boldsymbol{\mu}$} \\ \toprule
\multirow{3}{*}{$\RN{1}$}  	& $\RN{1}.1$  & \multicolumn{3}{c|}{$]-\frac{2938}{95}, \boldsymbol{\mu}_{\textrm{Belyakov}}[$} \\[1mm] \cmidrule{2-5}
									  		& $\RN{1}.2$  & \multicolumn{3}{c|}{$]\boldsymbol{\mu}_{\textrm{Belyakov}}, \boldsymbol{\mu}^1_{\textrm{Hopf}}[$} \\[1mm] \cmidrule{2-5}
                  			 			  	& $\RN{1}.3$  & \multicolumn{3}{c|}{$]\boldsymbol{\mu}^1_{\textrm{Hopf}}, -\frac{122}{7}[$} \\[1mm] \midrule
\multicolumn{2}{|c|}{$\RN{2}$} & \multicolumn{3}{c|}{$]-\frac{122}{7}, -12[$}   \\[1mm] \midrule
\multicolumn{2}{|c|}{$\RN{3}$}  & \multicolumn{3}{c|}{$]-12, -8[$}   \\[1mm] \bottomrule
\end{tabular}
\qquad 
\begin{tabular}{|c|c|c|c|c|}\toprule
\multicolumn{2}{|c|} {Case}& \multicolumn{3}{c|}{Interval of $\boldsymbol{\mu}$} \\ \toprule
\multicolumn{2}{|c|}{$\RN{4}$}  & \multicolumn{3}{c|}{$]-8, -6[$}   \\[1mm] \midrule
\multicolumn{2}{|c|}{$\RN{5}$}  & \multicolumn{3}{c|}{$]-6, \frac{110}{31}[$}   \\[1mm] \midrule
\multicolumn{2}{|c|}{$\RN{6}$}  & \multicolumn{3}{c|}{$]\frac{110}{31}, 8[$}   \\[1mm] \midrule
\multirow{2}{*}{$\RN{7}$}   & $\RN{7}.1$  & \multicolumn{3}{c|}{$]8, \boldsymbol{\mu}^2_{\textrm{Hopf}}[$} \\[1mm] \cmidrule{2-5}
										& $\RN{7}.2$  & \multicolumn{3}{c|}{$]\boldsymbol{\mu}^2_{\textrm{Hopf}}, 10[$} \\[1mm] \bottomrule
\end{tabular}
\end{small}
\vspace{.3cm}
  \captionof{table}{\small{The sub-intervals (that we designate as Cases) of $]-\frac{2938}{95},10[$ based on the values of $\boldsymbol{\mu}$ for which the bifurcations occur on the boundary (Cases $\RN{1}, \dots, \RN{7}$) and on the interior equilibrium (Cases $\RN{1}.1, \RN{1}.2, \RN{1}.3$ and $\RN{7}.1, \RN{7}.2$).}}\label{tbl:cases}
\end{table}

We illustrate our results with numerical simulations for values of the parameter in each case of Table \ref{tbl:cases}.
In Appendices \ref{bd_appendix} and ~\ref{int_appendix}, all figures are collected to allow a global understanding of the route to chaos. We divide the pictures in two cases: dynamics on the boundary (Table~\ref{tbl:Bd_dynamics_on_mu}) and on the cube's interior (Table~\ref{tbl:Int_dynamics_on_mu}).

\begin{prop}\label{prop:central_manfld_I}
In Case $\RN{1}$,  there exists an invariant and attracting two-dimen\-sional set $\Sigma_{\boldsymbol{\mu}}$ containing the points $A_1, A_2, A_3, A_4, B_2$ and $\mathcal{O}_{\boldsymbol{\mu}}$, and the heteroclinic connections $[A_2\to A_3]$, $[A_3\to A_4]$, $[A_4\to A_1]$, $[A_1\to B_2]$, and $[A_2\to B_2]$. Moreover,
\begin{enumerate}
	\item In Cases $\RN{1}.1$   and $\RN{1}.2$, if $p\in\Sigma_{\boldsymbol{\mu}}\backslash \FF$, then its $\omega$-limit is $\mathcal{O}_{\boldsymbol{\mu}}$. 
	\item In Case $\RN{1}.3$, if $p\in\Sigma_{\boldsymbol{\mu}}\backslash \left( \FF \cup W^s(\mathcal{O}_{\boldsymbol{\mu}}) \right)$, then its $\omega$-limit is $\mathcal{C}_{\boldsymbol{\mu}}$ (\footnote{The set $\mathcal{C}_{\boldsymbol{\mu}}$ is the periodic solution which emerges from the Hopf bifurcation described in the proof of Lemma \ref{corollary1}.}).
\end{enumerate}
In the three Cases,  $\inter\left( [0,1]^3 \right)$ is divided by $\Sigma_{\boldsymbol{\mu}}$ in two connected components.
\end{prop}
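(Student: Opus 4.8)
The plan is to produce a two-dimensional invariant surface onto which the whole interior dynamics collapses, and then treat everything planarly. First I would use the flow-invariance of the six faces (Lemma~\ref{Lemma3}): on each $\sigma_i$ the field \eqref{eq:poly_rep_8} restricts to a planar polynomial system, amenable to elementary phase-plane analysis. Combining the locations of the equilibria (Lemma~\ref{Lemma4}) with the eigenvalue signs tabulated in Tables~\ref{tbl:Eigenv_of_As} and~\ref{tbl:Eigenv_of_Bs}, and noting that in Case~\RN{1} the face equilibria $B_1$ and $B_3$ are merely formal (they lie in the cube only for $\boldsymbol{\mu}>\frac{110}{31}$ and $\boldsymbol{\mu}>-\frac{122}{7}$, respectively), each of the faces $\sigma_4,\sigma_6,\sigma_3$ carries no interior equilibrium, while $\sigma_5$ carries the single face equilibrium $B_2$. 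A Dulac (divergence) computation then excludes periodic orbits on the faces, so Poincar\'e--Bendixson forces the heteroclinic connections $[A_2\to A_3]$ on $\sigma_4$, $[A_3\to A_4]$ on $\sigma_6$, $[A_4\to A_1]$ on $\sigma_3$, and $[A_1\to B_2]$, $[A_2\to B_2]$ on $\sigma_5$. Read in the order $A_2\to A_3\to A_4\to A_1\to B_2$ together with $A_2\to B_2$, these close up into a heteroclinic cycle $\Gamma\subset\partial([0,1]^3)$ bounding a topological disk.

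Next I would realise $\Sigma_{\boldsymbol{\mu}}$ as the invariant disk that caps $\Gamma$ and passes through $\mathcal{O}_{\boldsymbol{\mu}}$. Its transverse attractivity should come from a contraction dominating the dynamics tangent to the surface: in Case~\RN{1}.3, $\mathcal{O}_{\boldsymbol{\mu}}$ is a saddle-focus whose strong stable eigenvalue $-\lambda_s$ dominates the in-surface expansion $\lambda_u$ (indeed $2\lambda_u<\lambda_s$, Fact~\ref{fact_g}), so its two-dimensional unstable manifold is normally attracting; the disk $\Sigma_{\boldsymbol{\mu}}$ is then the union of this unstable manifold, the periodic orbit $\mathcal{C}_{\boldsymbol{\mu}}$ it accumulates on (born at the supercritical Hopf $\boldsymbol{\mu}=\mu_2$), and the annular region swept by the orbits issuing from the unstable directions of $\Gamma$ and winding onto $\mathcal{C}_{\boldsymbol{\mu}}$ from outside. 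In Cases~\RN{1}.1 and~\RN{1}.2 the equilibrium is a sink (Lemma~\ref{corollary1}); there I would take $\Sigma_{\boldsymbol{\mu}}$ to be the weak-stable manifold of $\mathcal{O}_{\boldsymbol{\mu}}$, tangent to its two least-contracting eigendirections and again rimmed by $\Gamma$, the transverse attractivity now being supplied by the most negative (strong-stable) eigenvalue. A spectral-gap / normal-hyperbolicity argument guarantees such a surface, and continuity in $\boldsymbol{\mu}$ identifies these surfaces across the Belyakov value $\mu_1$ and the Hopf value $\mu_2$ as a single family.

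Once $\Sigma_{\boldsymbol{\mu}}$ is available the conclusions are planar. The surface is a compact invariant disk whose only invariant subsets disjoint from $\FF$ are $\mathcal{O}_{\boldsymbol{\mu}}$ (Cases~\RN{1}.1,~\RN{1}.2) and, in Case~\RN{1}.3, the pair $\{\mathcal{O}_{\boldsymbol{\mu}},\mathcal{C}_{\boldsymbol{\mu}}\}$; moreover the rim $\Gamma$ is repelling into the interior of $\Sigma_{\boldsymbol{\mu}}$, as can be read from the eigenvalue ratios at $A_1,A_2,A_3,A_4,B_2$. Poincar\'e--Bendixson on $\Sigma_{\boldsymbol{\mu}}$, together with the stability type of $\mathcal{O}_{\boldsymbol{\mu}}$ from Lemma~\ref{corollary1}, then yields that in Cases~\RN{1}.1 and~\RN{1}.2 every orbit of $\Sigma_{\boldsymbol{\mu}}\setminus\FF$ has $\omega$-limit $\mathcal{O}_{\boldsymbol{\mu}}$, while in Case~\RN{1}.3 every orbit of $\Sigma_{\boldsymbol{\mu}}\setminus(\FF\cup W^s(\mathcal{O}_{\boldsymbol{\mu}}))$ has $\omega$-limit $\mathcal{C}_{\boldsymbol{\mu}}$. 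Finally, $\Sigma_{\boldsymbol{\mu}}$ is a properly embedded disk with $\partial\Sigma_{\boldsymbol{\mu}}=\Gamma\subset\partial([0,1]^3)$, so a Jordan--Brouwer separation argument shows that $\inter([0,1]^3)\setminus\Sigma_{\boldsymbol{\mu}}$ consists of exactly two connected components.

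The main obstacle is the rigorous construction of $\Sigma_{\boldsymbol{\mu}}$ as one globally defined, normally attracting invariant surface valid throughout Case~\RN{1}. Proving that the two-dimensional unstable manifold of the saddle-focus persists and deforms into the weak-stable manifold of the sink without loss of normal hyperbolicity---and that it always reaches the boundary along $\Gamma$---is the delicate point, and is precisely where the numerical evidence of Section~\ref{s: numerical analysis} must be combined with the analytic estimates. By contrast, the planar face analysis and the final Poincar\'e--Bendixson reduction are routine.
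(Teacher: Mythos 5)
Your proposal is correct in outline and reaches all the stated conclusions, but it routes the argument through noticeably heavier machinery than the paper does, and the two proofs place the real work in different places. The paper's proof is short: it invokes the invariance of the faces (Lemma~\ref{Lemma3}), asserts that in Cases $\RN{1}.1$--$\RN{1}.2$ the only compact invariant set in $\inter\left([0,1]^3\right)$ is the attracting equilibrium $\mathcal{O}_{\boldsymbol{\mu}}$ (and in Case $\RN{1}.3$ additionally the periodic solution $\mathcal{C}_{\boldsymbol{\mu}}$ from Lemma~\ref{corollary1}), and then \emph{defines} $\Sigma_{\boldsymbol{\mu}}$ as the attracting two-dimensional set bounded by the boundary connections, obtained by ``analysing the direction of the flow''; the $\omega$-limit statements follow from the global attractivity of $\mathcal{O}_{\boldsymbol{\mu}}$, respectively $\mathcal{C}_{\boldsymbol{\mu}}$ (with $\Sigma_{\boldsymbol{\mu}}\supset\overline{W^s(\mathcal{C}_{\boldsymbol{\mu}})}$), not from any surface Poincar\'e--Bendixson argument, and the separation statement is read off from the fact that $\partial\Sigma_{\boldsymbol{\mu}}$ lies on the two pairs of opposite faces $\sigma_3,\sigma_4$ and $\sigma_5,\sigma_6$. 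You instead (i) derive the boundary connections face by face via Dulac plus Poincar\'e--Bendixson, (ii) build $\Sigma_{\boldsymbol{\mu}}$ by invariant-manifold/normal-hyperbolicity theory (the unstable manifold of the saddle-focus in Case $\RN{1}.3$, a weak-stable manifold of the sink in Cases $\RN{1}.1$--$\RN{1}.2$), and (iii) apply Poincar\'e--Bendixson on the resulting disk. This buys rigour where the paper leans on figures, but it also creates obligations the paper never incurs: on $\sigma_3$, $\sigma_4$, $\sigma_6$ the absence of interior face equilibria already rules out face periodic orbits by index theory, but on $\sigma_5$ (which contains $B_2$) the standard replicator Dulac function is not sign-definite because the restricted system retains within-group payoff terms, so your step (i) needs an explicit Dulac function there; and a weak-stable manifold of a sink is in general non-unique and only locally defined, so gluing your local surfaces across the Belyakov and first Hopf values into a single global disk rimmed by the boundary connections is exactly the step you flag as delicate --- and it is the step the paper sidesteps by taking $\Sigma_{\boldsymbol{\mu}}$ to be the attracting set itself. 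Both routes are legitimate; yours is the more demanding one to complete.
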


\begin{proof} By Lemma \ref{Lemma3}, we know that the faces are invariant.
In Cases $\RN{1}.1$ and $\RN{1}.2$, besides the attracting interior equilibrium, there are no more invariant compact sets in $\inter\left( [0,1]^3 \right)$.
Therefore, analysing the direction of the flow, the $\omega$-limit of all points in the cube's interior is the two-dimensional set bounded by the heteroclinic connections $[A_2\to A_3]$, $[A_3\to A_4]$, $[A_4\to A_1]$, $[A_1\to B_2]$, and $[A_2\to B_2]$ as depicted in Figure \ref{fig:02_mu=1,1, extra1} (left).
This defines a two-dimensional set $\Sigma_{\boldsymbol{\mu}}$ containing $\mathcal{O}_{\boldsymbol{\mu}}$ which is attracting and invariant (see Figure \ref{fig:04_mu=-20_v1_v2}).

In Case $\RN{1}.3$, besides the interior equilibrium $\mathcal{O}_{\boldsymbol{\mu}}$,
system~\eqref{eq:poly_rep_8} exhibits an attracting periodic solution, $\mathcal{C}_{\boldsymbol{\mu}}$, lying on the attracting two-dimensional set $\Sigma_{\boldsymbol{\mu}}$ (observe that this plane is attracting)
(see Figure~\ref{fig:01_limit_cycle} (left)), which emerge from a transcritical Hopf bifurcation by Lemma \ref{corollary1}. This two-dimensional set contains 
$\overline{W^s(\mathcal{C}_{\boldsymbol{\mu}})}$.
In all cases, since the boundary of $\Sigma_{\boldsymbol{\mu}}$ belongs to the opposite faces $\sigma_3,\sigma_4$ and $\sigma_5,\sigma_6$ of the phase space, it divides its interior in two connected components.

\end{proof}

\begin{figure}[h]
\hspace{-10mm}
    \begin{subfigure}[t]{0.4\textwidth}\centering
        \includegraphics[width=6.5cm]{./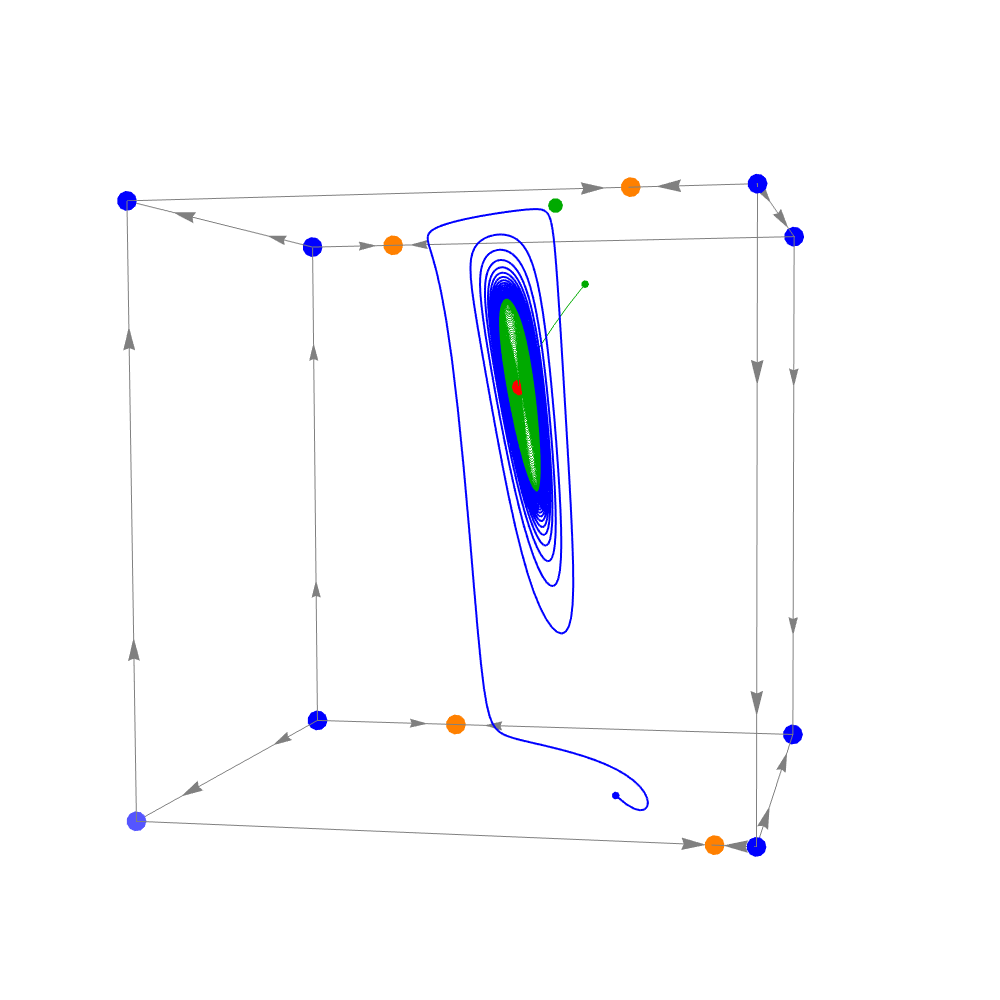}
    \end{subfigure}
    \quad \quad
    ~ 
    \begin{subfigure}[t]{0.4\textwidth}\centering
        \includegraphics[width=6.5cm]{./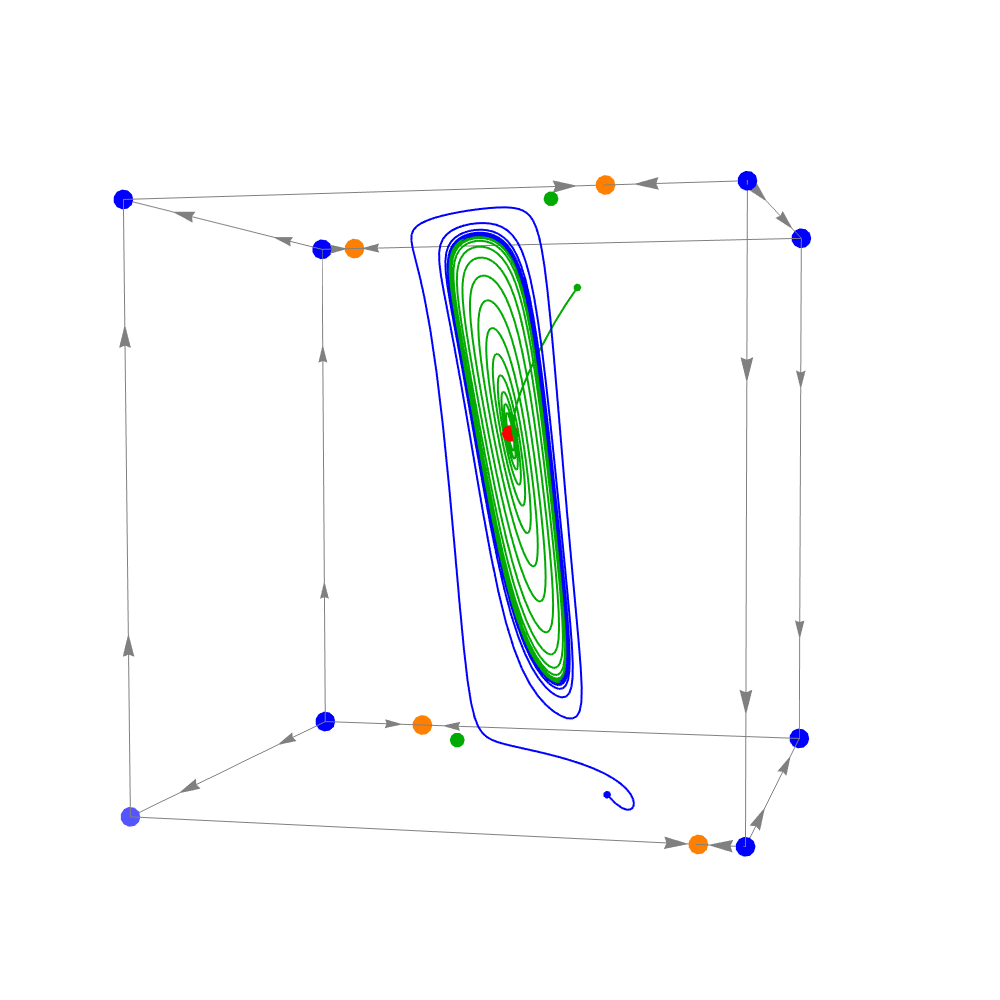}
    \end{subfigure}
    \vspace{-.7cm}
    \caption{\small{\textit{The limit cycle in Cases $\RN{1}$ and $\RN{2}$:} plot of two orbits (one in blue and one in green), the interior equilibrium, and all the boundary equilibria of system~\eqref{eq:poly_rep_8}, for $\boldsymbol{\mu}=-17.5$ (left) and $\boldsymbol{\mu}=-14$ (right) with $t\in [0,100]$.}}\label{fig:01_limit_cycle}
\end{figure}

\begin{figure}[h]
\hspace{-12mm}
    \begin{subfigure}[t]{0.4\textwidth}
        \includegraphics[width=6cm]{./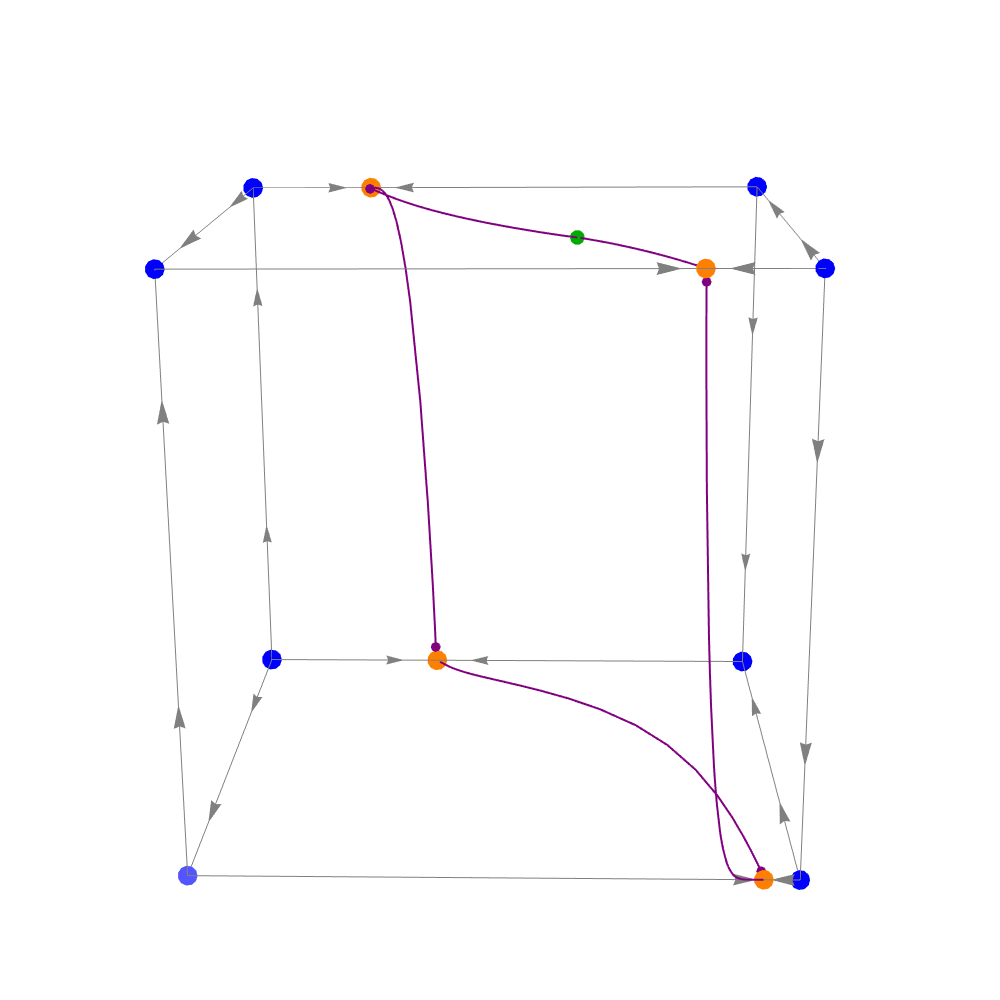}
    \end{subfigure}
    \quad
    \begin{subfigure}[t]{0.4\textwidth}
        \includegraphics[width=6cm]{./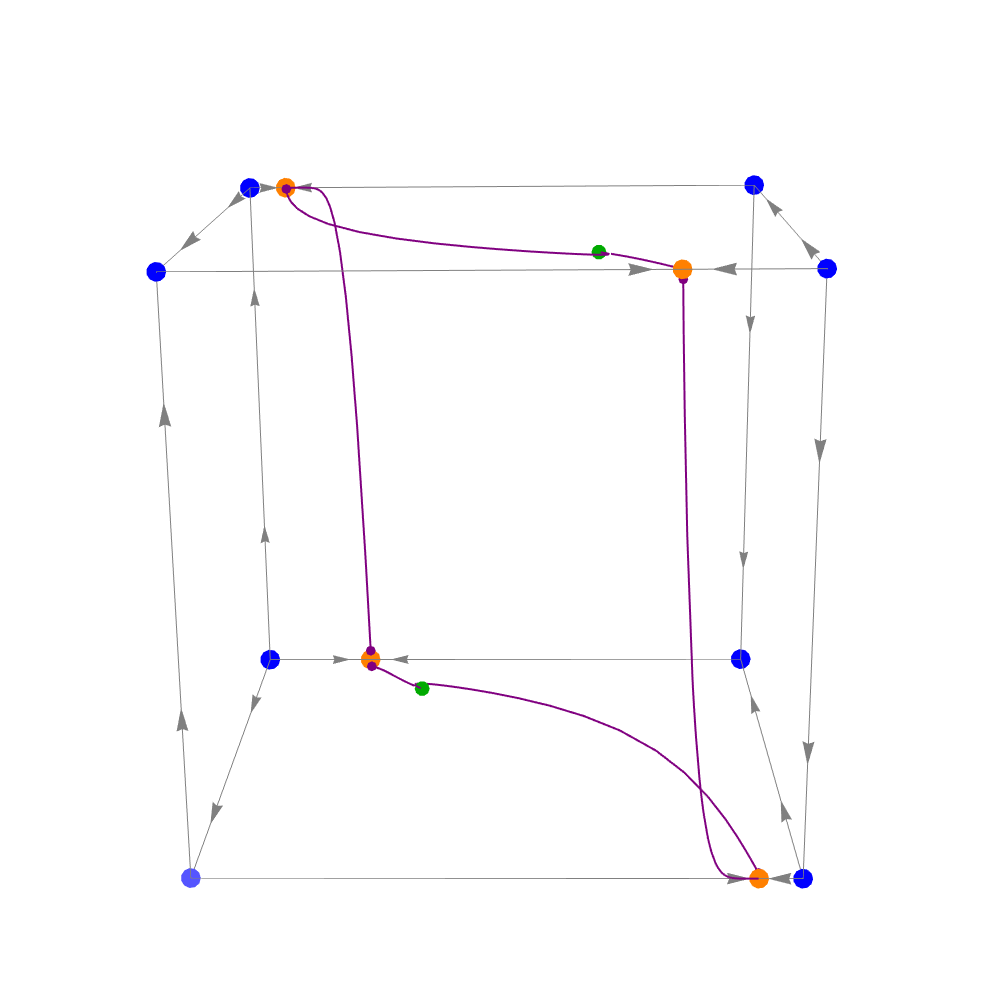}
    \end{subfigure}
    \vspace{-.5cm}
     \caption{\small{\textit{Boundary of $\Sigma_{\boldsymbol{\mu}}$ in Cases $\RN{1}$ and $\RN{2}$:} illustration of the dynamics on the boundary in Cases I (left, $\boldsymbol{\mu}=-20$) and II (right, $\boldsymbol{\mu}=-14$). }}\label{fig:02_mu=1,1, extra1}
\end{figure}

\begin{prop}\label{prop:central_manfld_II}
In Case $\RN{2}$, there exists an invariant and attracting $2$-di\-men\-sio\-nal set $\Sigma_{\boldsymbol{\mu}}$ containing the points $A_1, A_2, A_3, A_4, B_2, B_3$, $\mathcal{O}_{\boldsymbol{\mu}}$, and $\mathcal{C}_{\boldsymbol{\mu}}$.
If $p\in\Sigma_{\boldsymbol{\mu}}\backslash \left( \FF \cup \{ \mathcal{O}_{\boldsymbol{\mu}} \} \right)$, then its $\omega$-limit is $\mathcal{C}_{\boldsymbol{\mu}}$.
The set  $\inter\left( [0,1]^3 \right)$ is divided by $\Sigma_{\boldsymbol{\mu}}$ in two connected components.
\end{prop}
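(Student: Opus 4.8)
The plan is to treat Case \RN{2} as the direct continuation of Case \RN{1}.3 across the transcritical bifurcation at $\boldsymbol{\mu}=-\frac{122}{7}$, adapting the argument of Proposition \ref{prop:central_manfld_I}(2). First I would fix the ambient picture. Since $]-\frac{122}{7},-12[\,\subset\,]\boldsymbol{\mu}^1_{\textrm{Hopf}},\boldsymbol{\mu}_{\textrm{SA}}[$, Lemma \ref{corollary1} gives that $\mathcal{O}_{\boldsymbol{\mu}}$ is an unstable saddle-focus (a complex pair with positive real part and one negative real eigenvalue) enclosed by the attracting periodic solution $\mathcal{C}_{\boldsymbol{\mu}}$ born at the Hopf bifurcation; and, being below $\boldsymbol{\mu}_{\textrm{SA}}$, the interior of the cube carries no strange attractor and hence no nontrivial compact invariant set other than $\mathcal{O}_{\boldsymbol{\mu}}$ and $\mathcal{C}_{\boldsymbol{\mu}}$. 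By Lemma \ref{Lemma3} the six faces are flow-invariant, which reduces the flow on each $\sigma_i$ to a planar problem, and, since $\boldsymbol{\mu}<-8$, Fact \ref{fact_e}(1) locates the two branches of the one-dimensional manifold $W^s(\mathcal{O}_{\boldsymbol{\mu}})$ as the connections coming from the sources $v_3$ and $v_6$.

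The single genuinely new feature relative to Case \RN{1} is the equilibrium $B_3$, which at $\boldsymbol{\mu}=-\frac{122}{7}$ has just entered the face $\sigma_6$ through the edge previously occupied by $A_4$, the two exchanging stability in the transcritical bifurcation. Reading the eigenvalues and eigendirections off Tables \ref{tbl:Eigenv_of_As} and \ref{tbl:Eigenv_of_Bs}, I would recompute the local flow near $A_4$ and $B_3$ and reassemble the closed chain of heteroclinic connections bounding the invariant surface, now routed through $B_3$ on $\sigma_6$ while still joining $A_1,A_2,A_3,A_4,B_2,B_3$ along the faces $\sigma_3,\sigma_4,\sigma_5,\sigma_6$. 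The surface $\Sigma_{\boldsymbol{\mu}}$ that this chain encloses is flow-invariant because its boundary lies in the invariant faces; it contains $\mathcal{O}_{\boldsymbol{\mu}}$, $\mathcal{C}_{\boldsymbol{\mu}}$ and $\overline{W^u(\mathcal{O}_{\boldsymbol{\mu}})}$ in its interior together with $B_3$ on $\sigma_6$.

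To establish attraction and the $\omega$-limits I would restrict the flow to $\Sigma_{\boldsymbol{\mu}}$ and apply the Poincar\'e--Bendixson theorem: on this two-dimensional invariant surface the only equilibrium is the repelling focus $\mathcal{O}_{\boldsymbol{\mu}}$ and the only periodic orbit is $\mathcal{C}_{\boldsymbol{\mu}}$, while $W^s(\mathcal{O}_{\boldsymbol{\mu}})$ is transverse to $\Sigma_{\boldsymbol{\mu}}$ and meets it only at $\mathcal{O}_{\boldsymbol{\mu}}$; hence any $p\in\Sigma_{\boldsymbol{\mu}}\setminus(\FF\cup\{\mathcal{O}_{\boldsymbol{\mu}}\})$ satisfies $\omega(p)=\mathcal{C}_{\boldsymbol{\mu}}$. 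That $\Sigma_{\boldsymbol{\mu}}$ is attracting in the full space follows, as in Proposition \ref{prop:central_manfld_I}, from analysing the direction of the flow along the boundary connections and from the fact that $\mathcal{C}_{\boldsymbol{\mu}}\subset\Sigma_{\boldsymbol{\mu}}$ already attracts an open set. Finally, the separation statement is purely topological and identical to Case \RN{1}: the boundary of $\Sigma_{\boldsymbol{\mu}}$ sits on the two pairs of opposite faces $\sigma_3,\sigma_4$ and $\sigma_5,\sigma_6$, so the surface cuts $\inter([0,1]^3)$ into exactly two connected components.

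The step I expect to be the real work is the middle one: verifying that, once $B_3$ crosses into $\sigma_6$, the heteroclinic connections reorganise into a single closed curve that still spans the opposite faces, with no connection escaping to a vertex or tearing the surface. This is where I would rely on the face-by-face planar phase portraits made available by the invariance of each $\sigma_i$ and on the numerical \emph{Facts} of Section \ref{s: numerical analysis}; the remaining arguments are essentially verbatim transcriptions of the proof of Proposition \ref{prop:central_manfld_I}.
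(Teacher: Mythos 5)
Your proposal is correct and follows essentially the same route as the paper: Case \RN{2} is treated as the continuation of Case \RN{1}.3, the only new ingredient being the transcritical bifurcation at $\boldsymbol{\mu}=-\frac{122}{7}$ that brings $B_3$ into $\sigma_6$, with the tables of eigenvalues confirming that $W^{\textrm{u}}(B_3)$ points into the interior so that the interior dynamics and the two-component separation persist unchanged. Your explicit Poincar\'e--Bendixson step on $\Sigma_{\boldsymbol{\mu}}$ merely spells out what the paper leaves implicit in asserting that $\mathcal{C}_{\boldsymbol{\mu}}$ remains the $\omega$-limit set.
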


\begin{proof}
The global dynamics in Case $\RN{2}$ is the same as in Case $\RN{1}.3$, with exception that  $A_4$ has undergone a transcritical bifurcation from where the saddle $B_3$ has evolved from a formal equilibrium to an equilibrium in the cube $[0,1]^3$ (see Figure \ref{fig:02_mu=1,1, extra1} (right)).
Using Table \ref{tbl:Eigenv_of_Bs}, we know that  $W^{\textrm{u}}(B_3)$ points out to the interior of the cube.
Thus the periodic solution $\mathcal{C}_{\boldsymbol{\mu}}$ is still the $\omega$-limit set of all points in $\inter\left( [0,1]^3 \right)\backslash\left( \{\mathcal{O}_{\boldsymbol{\mu}}\} \cup W^s(\mathcal{O}_{\boldsymbol{\mu}})\right)$ (see Figure~\ref{fig:01_limit_cycle} (right)).
Notice also that the $\inter\left( [0,1]^3 \right)$ is still divided by $\Sigma_{\boldsymbol{\mu}}$ in two connected components.

\end{proof}

\begin{rem}
The difference between Cases $\RN{1}$ and $\RN{2}$ is that $B_3$ appears as an equilibrium on the cube  in the second scenario.
However, the ``interior dynamics'' does not change qualitatively.
\end{rem}

\begin{prop}\label{prop:central_manfld_III}
In Case $\RN{3}$, there exists an invariant and attracting two-dimensional set $\Sigma_{\boldsymbol{\mu}}$ containing the points $v_2, v_4, A_2, A_3, A_4, B_2, B_3$, $\mathcal{O}_{\boldsymbol{\mu}}$ and $\mathcal{C}_{\boldsymbol{\mu}}$.
The set  $\inter\left( [0,1]^3 \right)$ is divided by $\Sigma_{\boldsymbol{\mu}}$ in two connected components.
\end{prop}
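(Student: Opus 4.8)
The plan is to follow closely the arguments of Propositions \ref{prop:central_manfld_I} and \ref{prop:central_manfld_II}, treating Case $\RN{3}$ as the continuation of Case $\RN{2}$ across the single transcritical bifurcation at $\boldsymbol{\mu}=-12$. At that value the edge equilibrium $A_1=\left(\frac{\boldsymbol{\mu}+12}{\boldsymbol{\mu}-14},1,1\right)$ collapses with the vertex $v_4=(0,1,1)$ and leaves the cube (its $x$-eigenvalue $-12-\boldsymbol{\mu}$ at $v_4$ passes through zero, cf.\ Table~\ref{tbl:Eigenv_of_vs} and Figure~\ref{fig:equilibria_on_scale}). The whole strategy is thus to show that, once $A_1$ has become formal, its structural role on the boundary of the invariant attracting membrane $\Sigma_{\boldsymbol{\mu}}$ is inherited by the pair of saddle vertices $\{v_2,v_4\}$, while everything else persists from Case $\RN{2}$.

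First I would record, from Tables~\ref{tbl:Eigenv_of_vs}, \ref{tbl:Eigenv_of_As} and \ref{tbl:Eigenv_of_Bs} (or by a direct computation at the vertices), the hyperbolic type of the relevant equilibria for $\boldsymbol{\mu}\in\,]-12,-8[$. The equilibria $A_2,A_3,A_4,B_2,B_3$ still lie in the cube by Lemma~\ref{Lemma4}, with the same eigenvalue signs as in Case $\RN{2}$. The two new protagonists are saddles: at $v_2=(0,0,1)$ the eigenvalues are $(8-\boldsymbol{\mu},-14,-23)$, so its one-dimensional unstable manifold runs along the edge $\sigma_4\cap\sigma_5$; at $v_4=(0,1,1)$ they are $(-12-\boldsymbol{\mu},10,-34)$, so its one-dimensional unstable manifold lies on the face $\sigma_5$ and, following the edge $\sigma_2\cap\sigma_5$, heads toward $v_2$.

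Next I would reconstruct the heteroclinic network bounding $\Sigma_{\boldsymbol{\mu}}$. The connections $[A_2\to A_3]$ on $\sigma_4$, $[A_3\to A_4]$ on $\sigma_6$ (organised around $B_3$ exactly as in Case $\RN{2}$), and $[A_2\to B_2]$ on $\sigma_5$ are inherited unchanged. The two connections of Case $\RN{2}$ involving $A_1$, namely $[A_4\to A_1]$ and $[A_1\to B_2]$, are replaced, after the collision, by the continuation $[A_4\to v_4]$ on $\sigma_3$ together with the edge connections $[v_4\to v_2]$ and $[v_2\to A_2]$ produced by the unstable manifolds computed above (one checks that $A_2$ is attracting along $\sigma_4\cap\sigma_5$, since its tangential eigenvalue equals $x(1-x)(\boldsymbol{\mu}-14)<0$). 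Matching these unstable directions to the stable directions of their targets and invoking the invariance of the faces (Lemma~\ref{Lemma3}) shows that the network closes up and bounds a single two-dimensional invariant attracting set $\Sigma_{\boldsymbol{\mu}}$ through the interior, carrying $\mathcal{O}_{\boldsymbol{\mu}}$ and, since $]-12,-8[\subset\,]\boldsymbol{\mu}^1_{\textrm{Hopf}},\mu_3[$, the attracting periodic solution $\mathcal{C}_{\boldsymbol{\mu}}$ of Lemma~\ref{corollary1}. As in the previous cases, the boundary of $\Sigma_{\boldsymbol{\mu}}$ meets both pairs of opposite faces $\sigma_3,\sigma_4$ and $\sigma_5,\sigma_6$, so it separates $\inter\left([0,1]^3\right)$ into two connected components.

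The main obstacle is the third step: verifying that the rewiring of the heteroclinic network across the transcritical bifurcation is globally consistent, i.e.\ that the one-dimensional unstable manifolds of the new saddle vertices $v_2,v_4$ indeed land on the intended targets, so that the boundary curve closes and $\Sigma_{\boldsymbol{\mu}}$ remains a single embedded membrane rather than splitting into disconnected pieces. This is precisely where the sign and eigendirection data of Tables~\ref{tbl:Eigenv_of_vs}--\ref{tbl:Eigenv_of_Bs}, together with the direction of the flow on the four faces $\sigma_3,\sigma_4,\sigma_5,\sigma_6$, must be used with care; the transversality of the invariant manifolds within each face is what ultimately guarantees the persistence of the separating set.
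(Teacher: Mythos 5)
Your proposal is correct and follows essentially the same route as the paper: Case $\RN{3}$ is obtained from Case $\RN{2}$ by continuity across the transcritical bifurcation at $\boldsymbol{\mu}=-12$ where $A_1$ collides with $v_4$ and leaves the cube, with everything else (including $\mathcal{C}_{\boldsymbol{\mu}}$ and the two-component separation) persisting. Your explicit rewiring of the boundary heteroclinic network through the saddles $v_4$ and $v_2$ (whose eigenvalue data you use correctly) is a more detailed bookkeeping of what the paper states tersely, not a different argument.
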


\begin{proof}
The $\omega$-limit of all points in $\inter\left(\sigma_5 \right)$ and $\inter\left(\sigma_6 \right)$ is $B_2$ and $B_3$, respectively (see Figure \ref{fig:02_mu=1,1, extra2} (left)).
The structure of the two-dimensional set $\Sigma_{\boldsymbol{\mu}}$ on the face $\sigma_5$ comes from Proposition~\ref{prop:central_manfld_II} and the fact that $A_1$ collapses with the vertex $v_4$ through a transcritical bifurcation (see   Table~\ref{tbl:Eigenv_of_As}).
Notice also that, by continuity from Case $\RN{2}$, the $\inter\left( [0,1]^3 \right)$ is still divided by $\Sigma_{\boldsymbol{\mu}}$ in two connected components (see Figure \ref{fig:02_limit_cycle} (left)).

\end{proof}

\begin{figure}[h]
\hspace{-10mm}
    \begin{subfigure}[t]{0.4\textwidth}\centering
        \includegraphics[width=6.5cm]{./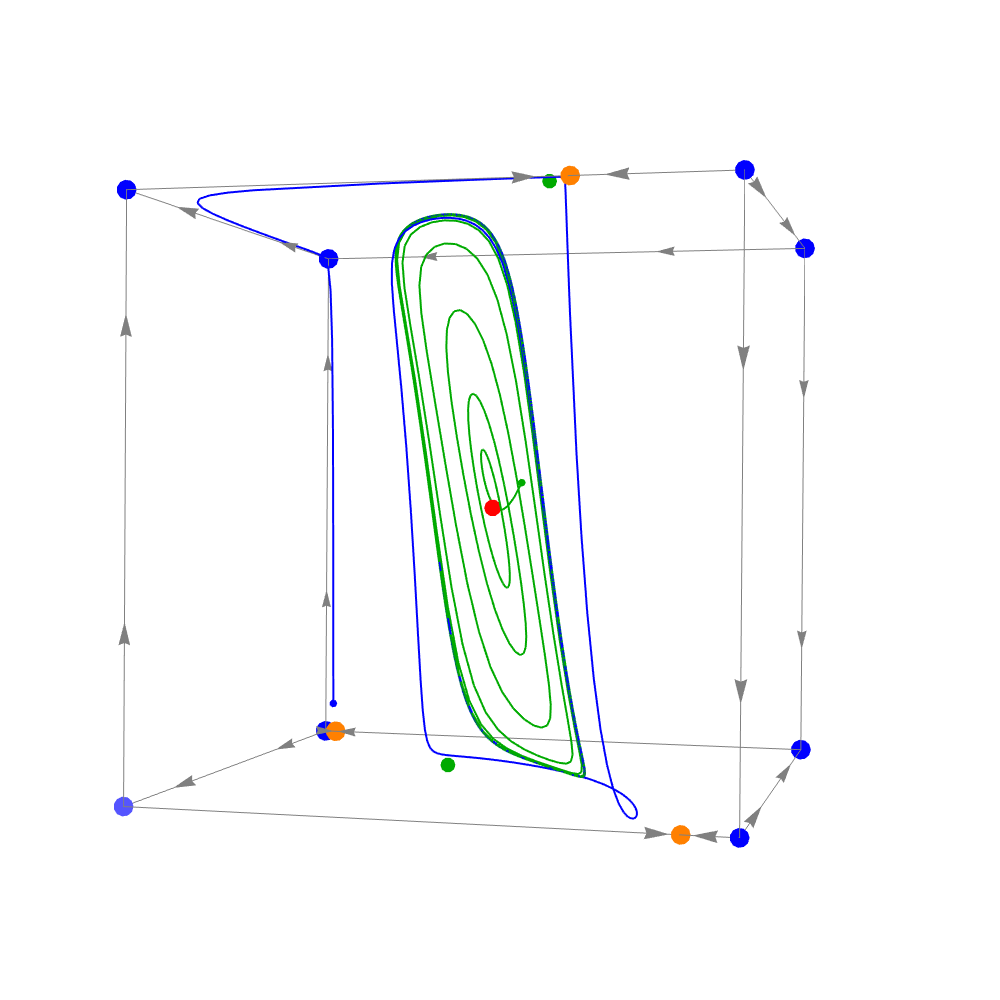}
    \end{subfigure}
    \quad \quad
    ~ 
    \begin{subfigure}[t]{0.4\textwidth}\centering
        \includegraphics[width=6.5cm]{./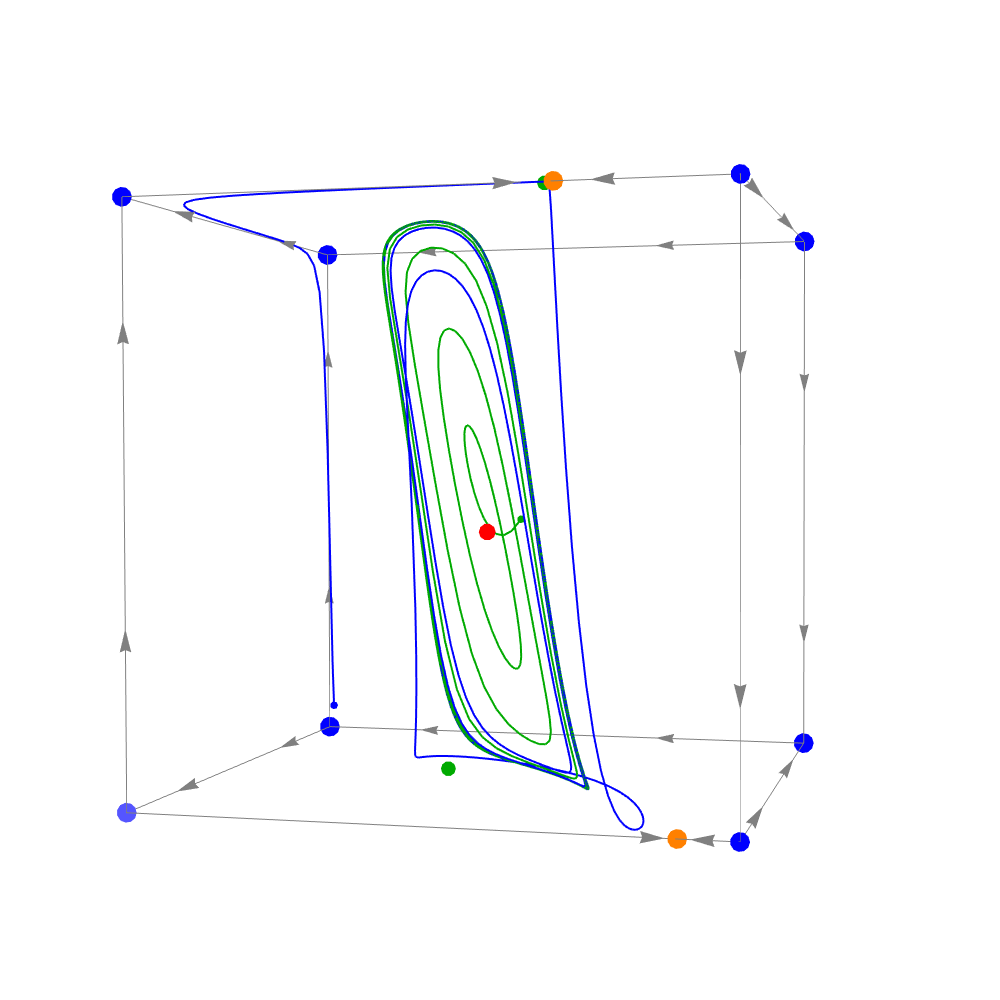}
    \end{subfigure}
    \vspace{-.7cm}
    \caption{\small{\textit{The boundary of $\Sigma_{\boldsymbol{\mu}}$ and the limit cycle in Cases $\RN{3}$ and $\RN{4}$:} plot of two orbits (one in blue and one in green), the interior equilibrium, and all the boundary equilibria of system~\eqref{eq:poly_rep_8}, for $\boldsymbol{\mu}=-8.5$ (left) and $\boldsymbol{\mu}=-7$ (right) with $t\in [0,100]$.}}\label{fig:02_limit_cycle}
\end{figure}

\begin{prop}\label{prop:central_manfld_IV}
In Case $\RN{4}$,  there exists an invariant and attracting two-dimensional set $\Sigma_{\boldsymbol{\mu}}$ containing the points $v_2, v_3, v_4, A_2, A_3, B_2, B_3, \mathcal{O}_{\boldsymbol{\mu}}$, and $\mathcal{C}_{\boldsymbol{\mu}}$. The set  $\inter\left( [0,1]^3 \right)$ is divided by $\Sigma_{\boldsymbol{\mu}}$ in two connected components.
\end{prop}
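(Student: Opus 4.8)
The plan is to deduce Case~$\RN{4}$ from Case~$\RN{3}$ by continuation through the transcritical bifurcation occurring at $\boldsymbol{\mu}=-8$, following the same strategy by which Proposition~\ref{prop:central_manfld_III} was derived from Proposition~\ref{prop:central_manfld_II}. By Lemma~\ref{Lemma3} the six faces of $[0,1]^3$ are flow-invariant, so I would argue face by face and then glue the pieces together. The first step is to record, from Lemma~\ref{Lemma4} and the caption of Figure~\ref{fig:heteroclinic_connections_v3_v6_O}, that as $\boldsymbol{\mu}$ increases through $-8$ the edge equilibrium $A_4=\left(\tfrac{8+\boldsymbol{\mu}}{\boldsymbol{\mu}-14},1,0\right)$ reaches the vertex $v_3=(0,1,0)$ and leaves the cube; the zero eigenvalue listed in Table~\ref{tbl:Eigenv_of_vs} identifies this collision as a transcritical bifurcation in which $A_4$ and $v_3$ exchange their stability. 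This is precisely what replaces $A_4$ (present in Case~$\RN{3}$) by $v_3$ in the list of equilibria carried by $\Sigma_{\boldsymbol{\mu}}$.

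Next I would reassemble the two-dimensional set. Using the eigenvalues and eigendirections of Tables~\ref{tbl:Eigenv_of_As} and~\ref{tbl:Eigenv_of_Bs} together with the sign of $f_{\boldsymbol{\mu}}$ on each face, I would check that $v_2,v_3,v_4,A_2,A_3,B_2,B_3$ are joined by heteroclinic trajectories bounding a disc, as displayed numerically in Figure~\ref{fig:02_limit_cycle} (right). The only qualitative change relative to Case~$\RN{3}$ is local to the edge that carried $A_4$: after the bifurcation the unstable direction formerly issuing from $A_4$ now issues from $v_3$, so the connection ending at $A_4$ is continued by one ending at $v_3$, while the remaining connections persist by hyperbolicity. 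Since all these equilibria lie on the (non flow-invariant) plane $(14-\boldsymbol{\mu})x+20y+4z=12-\boldsymbol{\mu}$ of the remark after Lemma~\ref{lem:fact_a}, the set $\Sigma_{\boldsymbol{\mu}}$ is a small deformation of that planar configuration, and its attractivity and invariance are inherited by continuity from Case~$\RN{3}$ together with the transverse contraction supplied by the negative real eigenvalue of $\mathcal{O}_{\boldsymbol{\mu}}$.

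For the interior, the periodic solution $\mathcal{C}_{\boldsymbol{\mu}}$ born at the supercritical Hopf bifurcation of Lemma~\ref{corollary1} survives for every $\boldsymbol{\mu}\in\,]-8,-6[\subset\,]\boldsymbol{\mu}^1_{\textrm{Hopf}},\boldsymbol{\mu}^2_{\textrm{Hopf}}[$ and lies on $\Sigma_{\boldsymbol{\mu}}$, which is attracting and contains $\mathcal{O}_{\boldsymbol{\mu}}$. Finally, because the boundary of $\Sigma_{\boldsymbol{\mu}}$ still meets the opposite face pairs $\sigma_3,\sigma_4$ and $\sigma_5,\sigma_6$, and no equilibrium crosses the interior as $\boldsymbol{\mu}$ passes through $-8$, the splitting of $\inter\left([0,1]^3\right)$ into two connected components is inherited from Proposition~\ref{prop:central_manfld_III} by continuity.

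I expect the main obstacle to be the second step: verifying rigorously that the heteroclinic trajectories close up into a genuine invariant two-dimensional set once $A_4$ has merged with $v_3$, rather than merely persisting as isolated connections. This amounts to showing that no new compact invariant set is created in $\inter\left([0,1]^3\right)$ at $\boldsymbol{\mu}=-8$ and that $\Sigma_{\boldsymbol{\mu}}$ depends continuously on $\boldsymbol{\mu}$ as a graph over a region of the bifurcation plane; the numerical evidence of Figure~\ref{fig:02_limit_cycle} (right) is what makes this plausible.
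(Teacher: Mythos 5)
Your proposal follows essentially the same route as the paper: both arguments obtain Case IV from Case III by continuity through the transcritical bifurcation at $\boldsymbol{\mu}=-8$ in which $A_4$ collapses with $v_3$ (so $v_3$ replaces $A_4$ in the list of points carried by $\Sigma_{\boldsymbol{\mu}}$), with the boundary dynamics on $\sigma_5$ and $\sigma_6$ pinning the set and the two-component splitting and attractivity inherited from the previous case. Your added caveat about rigorously reassembling the heteroclinic boundary is fair, but the paper's own proof is no more detailed on that point, resting on the same continuity argument and the numerical evidence of the figures.
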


\begin{proof}
From Case $\RN{3}$ to Case $\RN{4}$, the equilibrium $A_4$ disappears through a transcritical bifurcation (Table~\ref{tbl:Eigenv_of_As}).
Since the $\omega$-limit of all points in $\inter\left(\sigma_5 \right)$ and $\inter\left(\sigma_6 \right)$ is $B_2$ and $B_3$, respectively (see Figure \ref{fig:02_mu=1,1, extra2} (right)), the two-dimensional set $\Sigma_{\boldsymbol{\mu}}$ of Case $\RN{3}$ gives rise to a  two-dimensional set containing $\mathcal{C}_{\boldsymbol{\mu}}$ and $\mathcal{O}_{\boldsymbol{\mu}}$ (see Figure \ref{fig:02_limit_cycle} (right)).
Notice that $\inter\left( [0,1]^3 \right)$ is divided by $\Sigma_{\boldsymbol{\mu}}$ in two connected components, and the latter set is still attracting.

\end{proof}

\begin{figure}[h]
\hspace{-12mm}
    \centering
    \begin{subfigure}[t]{0.4\textwidth}
        \includegraphics[width=6cm]{./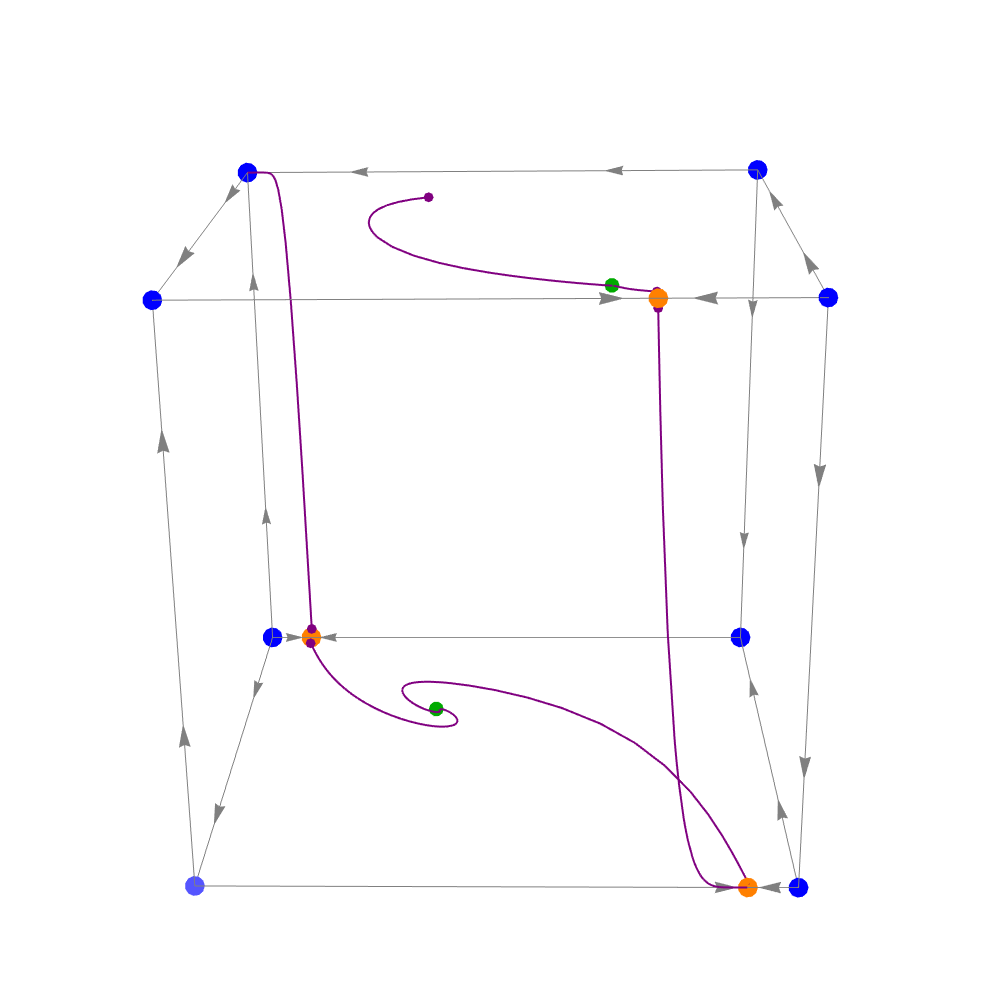}
    \end{subfigure}
    \quad
    \begin{subfigure}[t]{0.4\textwidth}
        \includegraphics[width=6cm]{./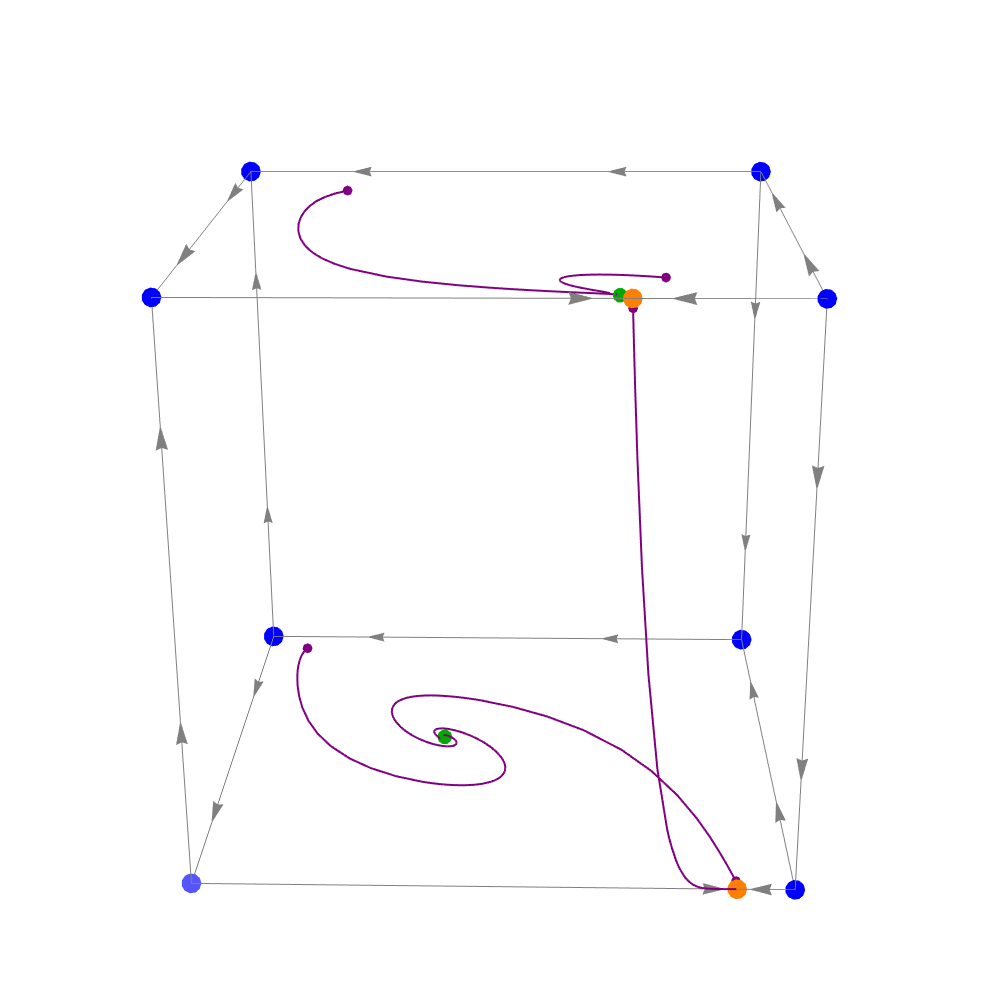}
    \end{subfigure}
    \vspace{-.5cm}
     \caption{\small{\textit{The boundary of $\Sigma_{\boldsymbol{\mu}}$ in Cases $\RN{3}$ and $\RN{4}$:} illustration of the dynamics on the boundary in Cases III (left, $\boldsymbol{\mu}=-10$) and IV (right, $\boldsymbol{\mu}=-7$).  }}\label{fig:02_mu=1,1, extra2}
\end{figure}

By Fact \ref{fact_e}, for $\boldsymbol{\mu} \in\,]    \boldsymbol{\mu}^1_{\textrm{Hopf}}, -8[$ (Cases $\RN{1}$--$\RN{3}$),  the two branches of $W^s(\mathcal{O}_{\boldsymbol{\mu}})$ are connected with the sources $v_3$ and $v_6$. From Case $\RN{4}$ on, the equilibrium $v_3$ changes stability and the two branches of  $W^s(\mathcal{O}_{\boldsymbol{\mu}})$  are connected with $v_6$ in different eigenspaces.
Dramatic changes occur in Case $\RN{5}$.

\begin{prop}\label{prop:central_manfld_V}
In Case $\RN{5}$,  there exists an invariant and attracting two-dimensional set $\Sigma_{\boldsymbol{\mu}}$ containing the points $A_2, A_3, B_3$, $\mathcal{O}_{\boldsymbol{\mu}}$, and $\mathcal{C}_{\boldsymbol{\mu}}$. This manifold is  $\overline{W^s(\mathcal{C}_{\boldsymbol{\mu}})}$ and the set  $\Sigma_{\boldsymbol{\mu}}$ does not divide $\inter\left( [0,1]^3 \right)$ in two connected components.
Moreover, there exists $\tilde{\boldsymbol{\mu}}\gtrsim\boldsymbol{\mu}_{\textrm{SA}}$ such that $f_{\tilde{\boldsymbol{\mu}}}$ is $C^2$-close to a vector field exhibiting strange attractors.

\end{prop}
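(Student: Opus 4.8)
The plan is to realise $\Sigma_{\boldsymbol{\mu}}$ as the closure of the two-dimensional unstable manifold of the saddle-focus $\mathcal{O}_{\boldsymbol{\mu}}$, to read its boundary off the flow on the faces using Lemmas~\ref{Lemma3} and~\ref{Lemma4} and the sign tables, and then to isolate the genuinely new phenomenon of Case $\RN{5}$, which is purely topological and is triggered by the departure of $B_2$ from the cube. First I would fix the local picture at the interior equilibrium. Since Case $\RN{5}$, that is $\boldsymbol{\mu}\in\,]-6,\frac{110}{31}[$, is contained in $]\boldsymbol{\mu}^1_{\textrm{Hopf}},\boldsymbol{\mu}^2_{\textrm{Hopf}}[$, Fact~\ref{fact_g} gives that $\mathcal{O}_{\boldsymbol{\mu}}$ is a saddle-focus with a two-dimensional unstable manifold (the complex pair $\lambda_u\pm i\omega$, $\lambda_u>0$) and a one-dimensional stable manifold (the real eigenvalue $-\lambda_s<0$), while Lemma~\ref{corollary1} guarantees that the attracting periodic solution $\mathcal{C}_{\boldsymbol{\mu}}$ born at the Hopf bifurcation persists throughout the Case.

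I would then set $\Sigma_{\boldsymbol{\mu}}:=\overline{W^u(\mathcal{O}_{\boldsymbol{\mu}})}$. Orbits on $W^u(\mathcal{O}_{\boldsymbol{\mu}})$ spiral away from $\mathcal{O}_{\boldsymbol{\mu}}$ and, since by the digestive remark there is no other compact invariant set in the interior for $\boldsymbol{\mu}<\boldsymbol{\mu}_{\textrm{SA}}$, they accumulate on $\mathcal{C}_{\boldsymbol{\mu}}$; this gives the identification $\Sigma_{\boldsymbol{\mu}}=\overline{W^s(\mathcal{C}_{\boldsymbol{\mu}})}$ inside the surface. Invariance is immediate, and transverse attraction in $\Rr^3$ follows from the single negative real eigenvalue $-\lambda_s$ together with the absence of competing attractors. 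By Lemma~\ref{Lemma4}, the only boundary equilibria surviving in this Case are $A_2, A_3$ (on $\sigma_4$) and $B_3$ (on $\sigma_6$), so the trace of $\Sigma_{\boldsymbol{\mu}}$ on $\FF$, determined from the flow directions in Tables~\ref{tbl:Eigenv_of_As} and~\ref{tbl:Eigenv_of_Bs}, passes through exactly these points, accounting for the list in the statement.

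The decisive step, and the one I expect to be the main obstacle, is the loss of separation. In Cases $\RN{1}$--$\RN{4}$ the boundary of $\Sigma_{\boldsymbol{\mu}}$ ran along both pairs of opposite faces $\sigma_3,\sigma_4$ and $\sigma_5,\sigma_6$, which forced $\Sigma_{\boldsymbol{\mu}}$ to slice the cube; at $\boldsymbol{\mu}=-6$ the transcritical bifurcation of $A_2$ ejects $B_2$ from the cube, so the portion of $\partial\Sigma_{\boldsymbol{\mu}}$ that previously reached $\sigma_5$ disappears and the remaining boundary is confined to the adjacent faces $\sigma_4,\sigma_6$, which meet along the common edge joining $v_1$ to $v_5$. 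I would then argue that a surface whose boundary lies only on a pair of faces sharing an edge bounds a cap to one side, so that $\inter\left([0,1]^3\right)\setminus\Sigma_{\boldsymbol{\mu}}$ stays connected; concretely, I would produce a path joining the two sides of $\Sigma_{\boldsymbol{\mu}}$ that skirts around the now-free portion of $\partial\Sigma_{\boldsymbol{\mu}}$ near $\sigma_5$. The delicate part is to track precisely how the heteroclinic web on the faces rearranges across $\boldsymbol{\mu}=-6$, in accordance with Figures~\ref{fig:02_limit_cycle} and~\ref{fig:02_mu=1,1, extra2}; this structural description is established for the regular regime $\boldsymbol{\mu}<\boldsymbol{\mu}_{\textrm{SA}}$ and then extended by hyperbolic continuation.

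Finally, for the ``moreover'' clause I would exploit that $\boldsymbol{\mu}_{\textrm{SA}}\approx 1.4645$ lies inside $]-6,\frac{110}{31}[$, hence inside Case $\RN{5}$. Choosing $\tilde{\boldsymbol{\mu}}\in I\subset\,]\boldsymbol{\mu}_{\textrm{SA}},\boldsymbol{\mu}^2_{\textrm{Hopf}}[$ with $\tilde{\boldsymbol{\mu}}\gtrsim\boldsymbol{\mu}_{\textrm{SA}}$ and $f_{\tilde{\boldsymbol{\mu}}}$ carrying a positive Lyapunov exponent (Fact~\ref{hom}), Fact~\ref{hom2} supplies a $C^2$ vector field $g\in\mathcal{X}$ arbitrarily close to $f_{\tilde{\boldsymbol{\mu}}}$ whose flow has a homoclinic orbit to the hyperbolic continuation of $\mathcal{O}_{\tilde{\boldsymbol{\mu}}}$. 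Because the eigenvalues at $\mathcal{O}_{\tilde{\boldsymbol{\mu}}}$ satisfy $2\lambda_u<\lambda_s$ by Fact~\ref{fact_g}, this is a Shilnikov homoclinic cycle to a saddle-focus obeying the Shilnikov inequality, so the theory revisited in Section~\ref{s: second part} (Shilnikov--Ovsyannikov together with Mora--Viana) produces suspended horseshoes and strange attractors for $g$. Consequently $f_{\tilde{\boldsymbol{\mu}}}$ is $C^2$-close to a vector field exhibiting strange attractors, which closes the argument.
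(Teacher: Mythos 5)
Your overall route matches the paper's: both arguments obtain $\Sigma_{\boldsymbol{\mu}}$ by continuation from Case $\RN{4}$ across the transcritical bifurcation at $\boldsymbol{\mu}=-6$ that expels $B_2$ from the cube, both locate the loss of the separating property in the fact that $\partial\Sigma_{\boldsymbol{\mu}}$ no longer reaches the face $\sigma_5$, and both derive the ``moreover'' clause exactly as you do, from Fact~\ref{hom} (positive LE for $\tilde{\boldsymbol{\mu}}\gtrsim\boldsymbol{\mu}_{\textrm{SA}}$, which indeed lies in $]-6,\tfrac{110}{31}[$), Fact~\ref{hom2}, and the Shilnikov--Mora--Viana machinery of Theorem~\ref{thrm:main}. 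Your treatment of that last clause is, if anything, more explicit than the paper's one-line citation.

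There is, however, one concrete flaw in your construction of $\Sigma_{\boldsymbol{\mu}}$. You set $\Sigma_{\boldsymbol{\mu}}:=\overline{W^u(\mathcal{O}_{\boldsymbol{\mu}})}$, but as the paper itself records in Subsection~\ref{subsec:att_whirlpool}, $W^u(\mathcal{O}_{\boldsymbol{\mu}})$ is a topological disc whose orbits spiral outward from $\mathcal{O}_{\boldsymbol{\mu}}$ and accumulate on the attracting curve $\mathcal{C}_{\boldsymbol{\mu}}$; its closure is therefore that compact disc, bounded by $\mathcal{C}_{\boldsymbol{\mu}}$, entirely contained in $\inter\left([0,1]^3\right)$. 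Such a set has empty trace on $\FF$ and cannot contain $A_2$, $A_3$ or $B_3$, so it does not satisfy the conclusion of the proposition, and your subsequent paragraph about ``the trace of $\Sigma_{\boldsymbol{\mu}}$ on $\FF$'' is inconsistent with your own definition. The object the paper continues from Propositions~\ref{prop:central_manfld_I}--\ref{prop:central_manfld_IV} is strictly larger: the global attracting two-dimensional set swept out as the $\omega$-limit of interior points, which extends past $\mathcal{C}_{\boldsymbol{\mu}}$ out to the heteroclinic web on the faces (and in Case $\RN{5}$ acquires a ``screw'' singularity at $A_2$, a feature your argument does not see). Once $\Sigma_{\boldsymbol{\mu}}$ is defined this way, your topological argument for non-separation --- that $\partial\Sigma_{\boldsymbol{\mu}}$ is confined to the adjacent faces $\sigma_4,\sigma_6$ after $B_2$ leaves, so the complement of $\Sigma_{\boldsymbol{\mu}}$ stays connected --- is sound and is essentially what the paper asserts via its figures.
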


\begin{proof}
From Case $\RN{4}$ to Case $\RN{5}$, the equilibrium $B_2$ disappears through a transcritical bifurcation (see Figure~\ref{fig:02_mu=1,1, extra3} (left)) and a screwed attracting two-dimensional set $\Sigma_{\boldsymbol{\mu}}$ with a singular point at $A_2$ emerges.  The manifold $W^s(\mathcal{C}_{\boldsymbol{\mu}})$ is spreading along $\inter\left( [0,1]^3 \right)$ (see Figure~\ref{fig:03_limit_cycle} (left)).
The last assertion is a consequence of  Fact~\ref{hom},  for the  $\boldsymbol{\mu}$-values for which  $f_{\boldsymbol{\mu}}$ has a positive LE, and Theorem  \ref{thrm:main}.

\end{proof}

\begin{figure}[h]
\hspace{-10mm}
    \begin{subfigure}[t]{0.4\textwidth}\centering
         \includegraphics[width=6.5cm]{./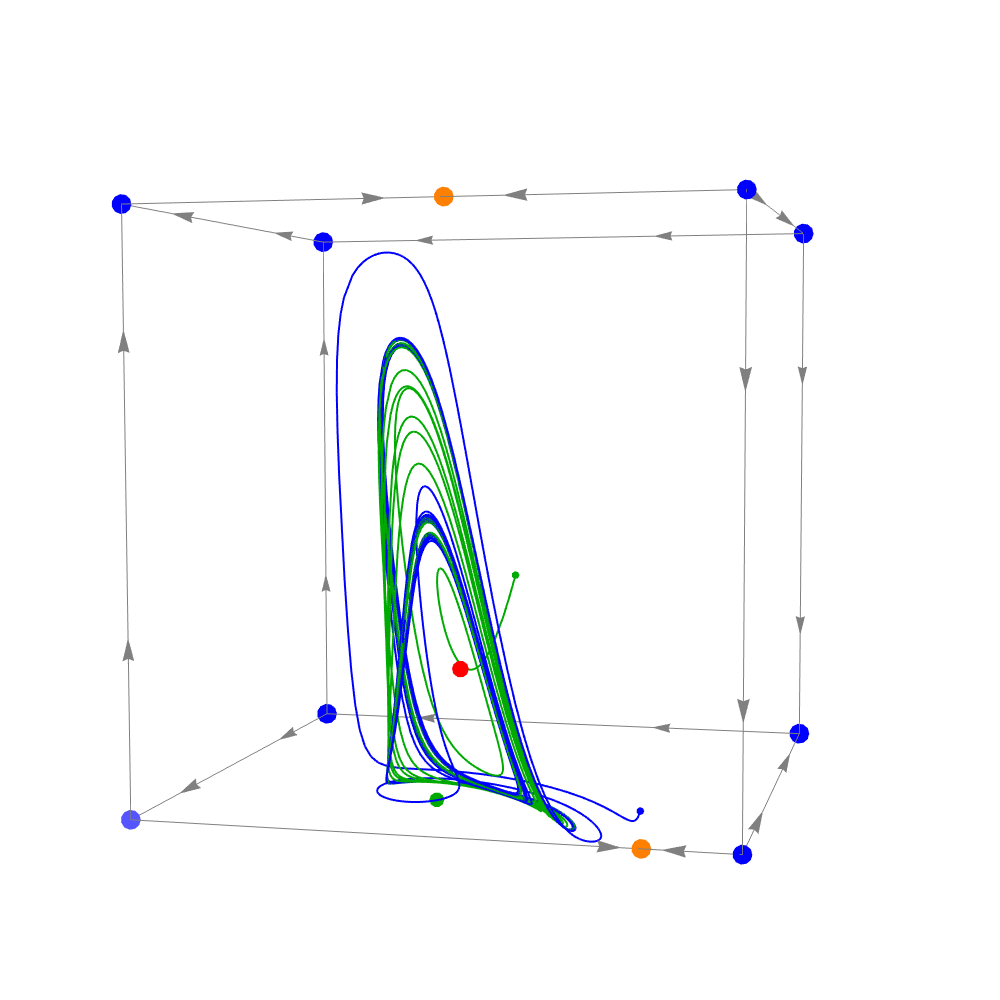}
    \end{subfigure}
    \quad \quad
    ~ 
    \begin{subfigure}[t]{0.4\textwidth}\centering
        \includegraphics[width=6.5cm]{./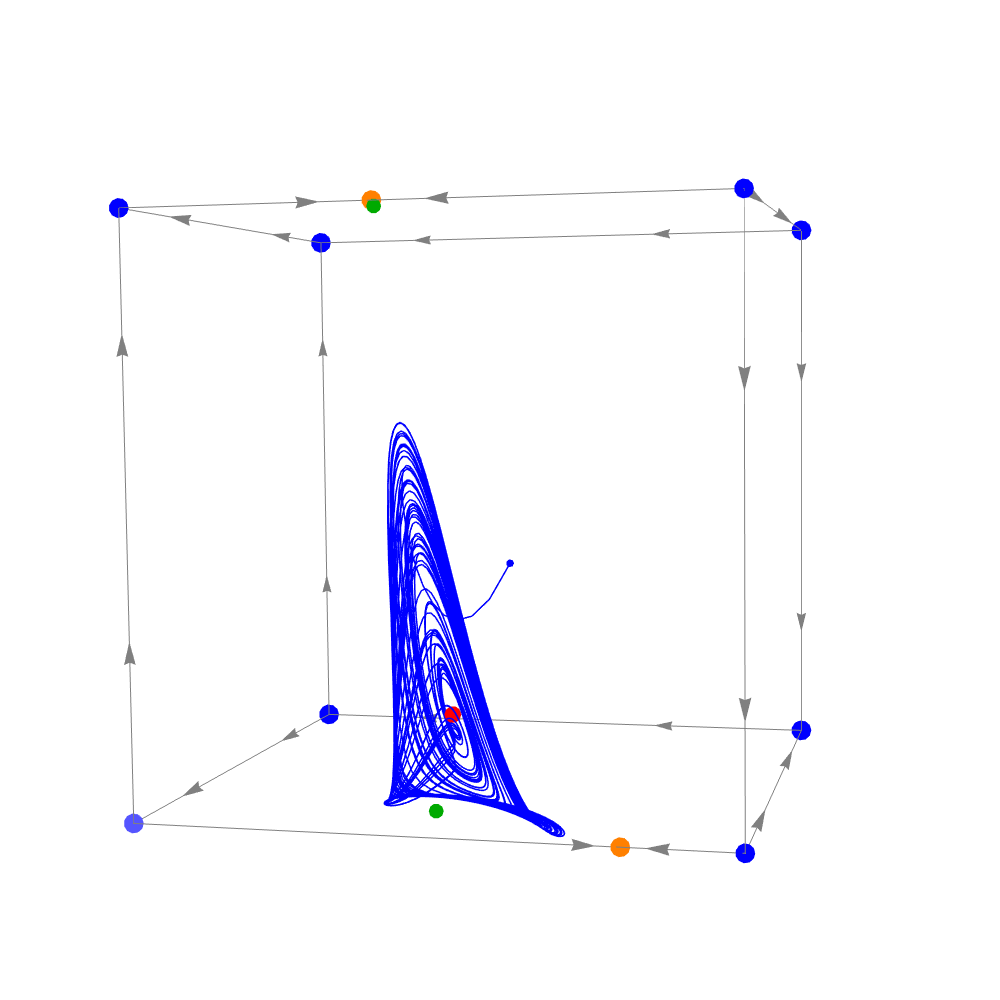}
    \end{subfigure}
    \vspace{-.7cm}
    \caption{\small{\textit{Switching from regular (Case $\RN{5}$) to complex (Case $\RN{6}$) dynamics:} plot of two orbits (left) and one orbit (right), the interior equilibrium, and all the boundary equilibria of system~\eqref{eq:poly_rep_8}, for $\boldsymbol{\mu}=1.1$ (left) and $\boldsymbol{\mu}=3.6$ (right) with $t\in [0,100]$.}}\label{fig:03_limit_cycle}
\end{figure}

The value $\boldsymbol{\mu} = \boldsymbol{\mu}_{\textrm{SA}}$ seems to be the  parameter which separates  regular (zero topological entropy) from chaotic dynamics. Before going into Case VI, notice that $W^s(A_2)$ is contained in face $\sigma_5$.

\begin{figure}[h]
\hspace{-15mm}
    \begin{subfigure}[t]{0.3\textwidth}\centering
        \includegraphics[width=5.5cm]{./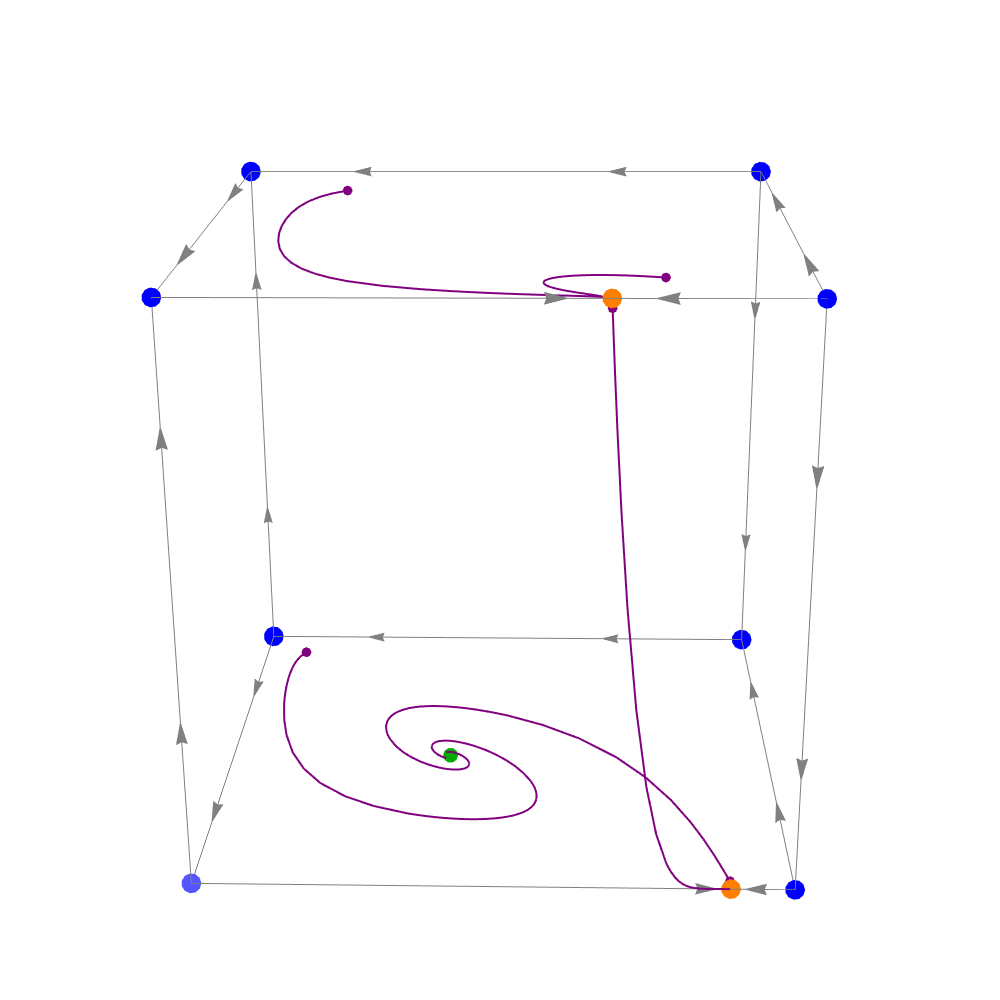}
    \end{subfigure}
    \qquad
    \begin{subfigure}[t]{0.3\textwidth}\centering
        \includegraphics[width=5.5cm]{./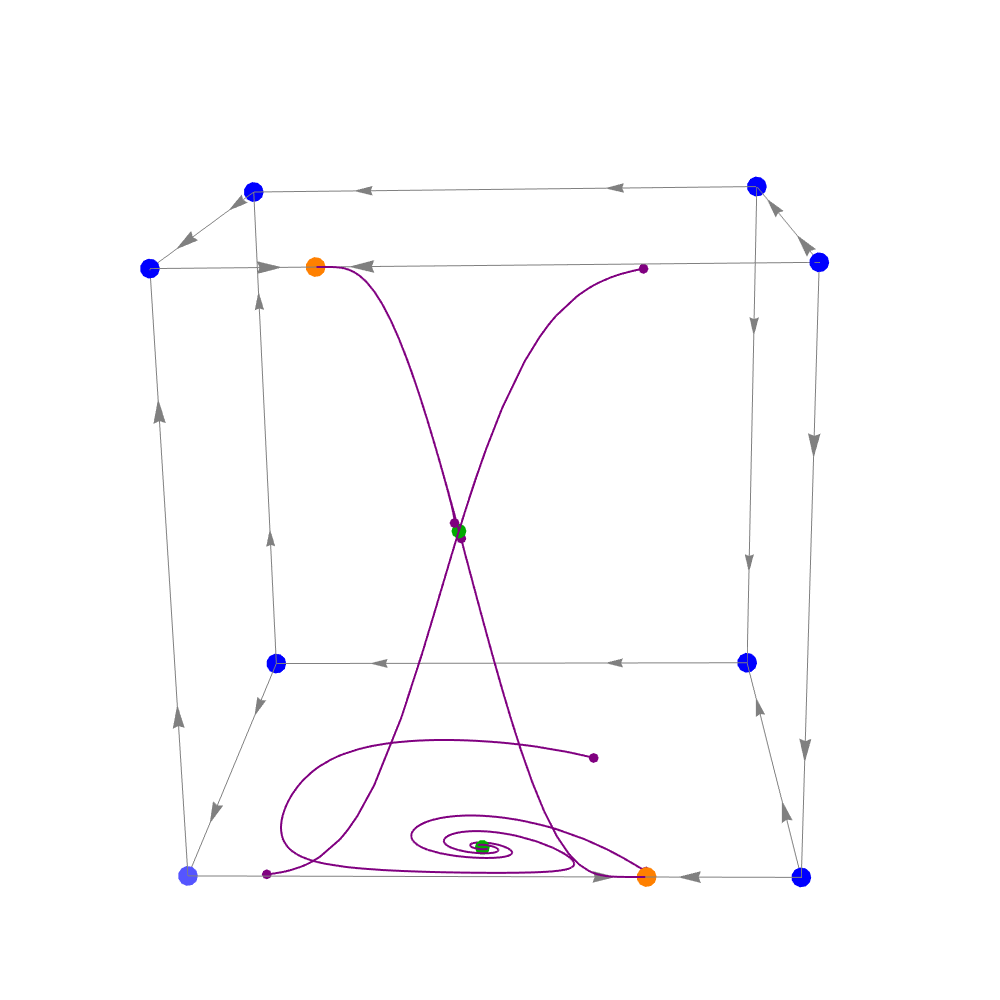}
    \end{subfigure}
    \qquad
    \begin{subfigure}[t]{0.3\textwidth}\centering
        \includegraphics[width=5.5cm]{./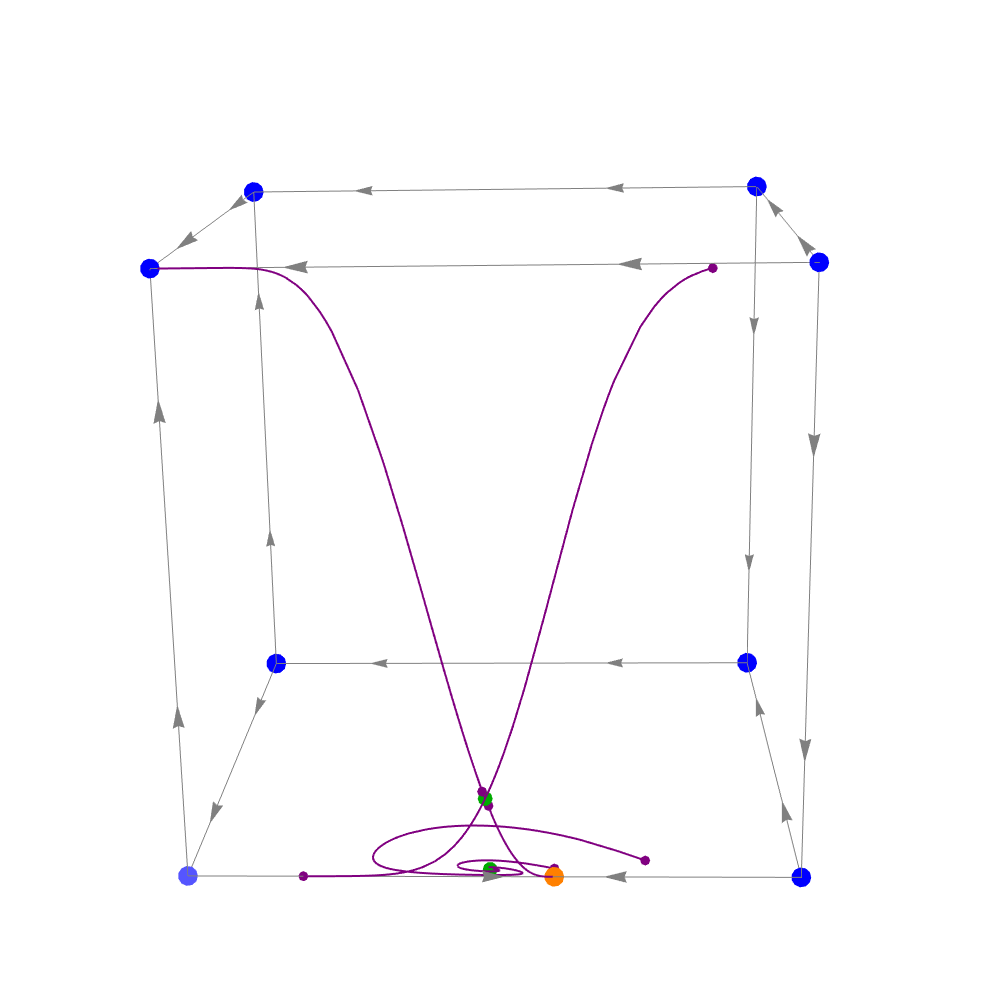}
    \end{subfigure}
    \vspace{-.5cm}
    \caption{\small{\textit{Dynamics on the boundary in Cases $\RN{5}$, $\RN{6}$ and $\RN{7}$:} illustration of the dynamics on the boundary in Cases V (left, $\boldsymbol{\mu}=-5$), VI (center, $\boldsymbol{\mu}=6$) and VII (right, $\boldsymbol{\mu}=9$). }}\label{fig:02_mu=1,1, extra3}
\end{figure}

\begin{prop}\label{prop:central_manfld_VI}
In Case $\RN{6}$, the set $\overline{W^s(B_1)}$ contains the points $v_1$, $v_3$, $v_5$, $v_6$, $v_7$  and $B_1$. The set $[0,1]^3\backslash \overline{W^s(B_1)}$ has two connected components: in one, the equilibrium $A_2$ is a sink; the other is dominated by the non-wandering set associated to a homoclinic orbit to $\mathcal{O}_{\boldsymbol{\mu}}$ for the values of $\boldsymbol{\mu}$ such that the greatest LE of $f_{\boldsymbol{\mu}}$ is positive.
\end{prop}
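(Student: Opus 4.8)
The plan is to assemble the global phase portrait in Case $\RN{6}$ ($\boldsymbol{\mu}\in\,]\frac{110}{31},8[$) from the local stability data on the boundary together with the numerically established Facts, treating $\overline{W^s(B_1)}$ as a separating membrane between a regular and a chaotic region. First I would record the eigenvalue signs in this range from Tables \ref{tbl:Eigenv_of_vs}, \ref{tbl:Eigenv_of_As} and \ref{tbl:Eigenv_of_Bs}. The decisive computations are that at $\boldsymbol{\mu}=\frac{110}{31}$ the equilibria $A_2$ and $B_1$ collide in a transcritical bifurcation on the edge $\sigma_4\cap\sigma_5$, after which $A_2=\left(\frac{8-\boldsymbol{\mu}}{14-\boldsymbol{\mu}},0,1\right)$ has all three eigenvalues negative — hence is a sink — while $B_1$ enters the cube as a saddle that is attracting in the $y$–direction (the eigenvalue transverse to $\sigma_4$ equals $\frac{37(\boldsymbol{\mu}-10)}{40+\boldsymbol{\mu}}<0$) and carries a one-dimensional unstable manifold inside $\sigma_4$. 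In particular $W^s(B_1)$ is two-dimensional, and by Lemma \ref{Lemma3} the faces are invariant, so this surface meets $\partial[0,1]^3$ only along heteroclinic connections between boundary equilibria.

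Next I would identify the boundary trace of $\overline{W^s(B_1)}$, which, being the closure of an invariant set, is itself compact and invariant, so its intersection with $\partial[0,1]^3$ is a union of vertices and connecting orbits. Using the signs above, $v_6$ is a source, $v_1,v_3,v_5$ are saddles with two-dimensional unstable manifolds and $v_7$ is a saddle with a one-dimensional unstable manifold; by Fact \ref{Ws(B_1)} the edge joining $v_5$ and $v_6$ already lies in $\overline{W^s(B_1)}$, placing $v_5$ and $v_6$ on the membrane. I would then follow the invariant edges and the face separatrices emanating from the repelling vertices: the pieces of $W^u(v_1)$, $W^u(v_3)$ and $W^u(v_7)$ lying on the faces $\sigma_1,\sigma_3,\sigma_6$ feed into the one-dimensional stable curve of $B_1$ inside $\sigma_4$, forcing $v_1,v_3,v_7\in\overline{W^s(B_1)}$ and closing up a cycle of connections through $v_1,v_3,v_5,v_6,v_7$ and $B_1$. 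This is the first sentence of the proposition.

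For the separation statement I would invoke a Jordan–Brouwer argument: the membrane $\overline{W^s(B_1)}$ is a topological disc whose boundary is the closed cycle of edges and connections through the five listed vertices on $\partial[0,1]^3$, so it cuts $\inter([0,1]^3)$ into exactly two connected components, mirroring the division obtained in Cases $\RN{1}$–$\RN{4}$. In the component on the $A_2$–side there are no other compact invariant sets, so the direction of the flow forces every trajectory there to converge to the sink $A_2$. In the complementary component lies the interior equilibrium $\mathcal{O}_{\boldsymbol{\mu}}$ (Lemma \ref{lem:fact_a}); since $\boldsymbol{\mu}\in\,]\mu_2,\mu_4[$, the classification in Lemma \ref{corollary1} makes it a saddle–focus with two complex eigenvalues of positive real part and one negative real eigenvalue, enjoying the dissipativeness $2\lambda_{\textrm u}<\lambda_{\textrm s}$ of Fact \ref{fact_g}. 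Because $]\frac{110}{31},8[\subset\,]\boldsymbol{\mu}_{\textrm{SA}},\boldsymbol{\mu}^2_{\textrm{Hopf}}[$, Fact \ref{fact_e}(3) shows one branch of $W^s(\mathcal{O}_{\boldsymbol{\mu}})$ winds around the non-wandering set carried by $W^u(\mathcal{O}_{\boldsymbol{\mu}})$, while Facts \ref{hom}–\ref{hom2} together with the heuristic of Subsection \ref{ss: digestive} provide, for the $\boldsymbol{\mu}$–values with positive largest Lyapunov exponent, a $C^2$–nearby homoclinic orbit to the hyperbolic continuation of $\mathcal{O}_{\boldsymbol{\mu}}$ and the accompanying horseshoes. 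This identifies the second component as the one dominated by that non-wandering set.

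The step I expect to be the main obstacle is the rigorous identification of $\overline{W^s(B_1)}$ — specifically, proving that its boundary trace is exactly the cycle through $v_1,v_3,v_5,v_6,v_7$ and that it is a single separating disc rather than a more intricate surface. This forces one to control the global heteroclinic connections on the faces, for which I would lean on the face-by-face two-dimensional analysis and the numerically checked Fact \ref{Ws(B_1)}; by contrast, the chaotic description of the second component is essentially a transcription of Facts \ref{fact_e}(3), \ref{hom} and \ref{hom2} and of Theorem \ref{thrm:main}.
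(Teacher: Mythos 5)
Your proposal is correct and follows essentially the same route as the paper: the transcritical collision of $A_2$ and $B_1$ at $\boldsymbol{\mu}=\frac{110}{31}$ turning $A_2$ into a sink and $W^{\textrm{s}}(B_1)$ into a two-dimensional separating manifold (by continuity from Case $\RN{5}$ together with Fact \ref{Ws(B_1)}), with the chaotic component handled by Facts \ref{hom}--\ref{hom2} exactly as in Proposition \ref{prop:central_manfld_V}. Your explicit tracing of the boundary cycle through $v_1,v_3,v_5,v_6,v_7$ and the Jordan--Brouwer separation step merely make explicit what the paper compresses into a continuity argument, and you rightly flag the rigorous identification of $\overline{W^{\textrm{s}}(B_1)}$ as the weakest link in both treatments.
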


\begin{proof}
The proof of this result comes from a continuity analysis of Case $\RN{5}$  and Facts~~\ref{hom} and~\ref{Ws(B_1)}.
In $\inter\left( \sigma_4 \right)$, $B_1$ is a hyperbolic saddle and Lebesgue almost all points are attracted to either $A_2$ or
$A_3$ as depicted in Figure~\ref{fig:02_mu=1,1, extra3} (center).
Since, in Case $\RN{5}$, $W^{\textrm{s}}(A_2)\cap \inter\left(\sigma_5 \right)=\inter\left(\sigma_5 \right)$ and
the equilibrium $B_1$ comes from $A_2$ through a transcritical bifurcation (see Figure~\ref{fig:03_limit_cycle} (right)), it turns out that, in Case $\RN{6}$, $W^{\textrm{s}}(B_1)\,\cap \,\inter\left( [0,1]^3 \right)$ is a two-dimensional invariant manifold whose shape is governed by the internal dynamics that divides the $\inter\left( [0,1]^3 \right)$ in two connected components: the one whose solutions have $\omega$-limit equal to $A_2$, and the one that contains the interior equilibrium $\mathcal{O}_{\boldsymbol{\mu}}$.
By the same argument as in the proof of Proposition \ref{prop:central_manfld_V}, the last assertion follows.

\end{proof}

\begin{prop}\label{prop:central_manfld_VII}
In Case $\RN{7}$, the set $\overline{W^s(B_1)}$ contains the points $v_1$, $v_3$, $v_6$, $v_7$ and $B_1$. The set $\Gamma_{(2,2,2)}\backslash 
\overline{W^s(B_1)}$ has two connected components: in one the  equilibrium $v_2$ is a global sink; in the other, we have two sub-Cases:
\begin{enumerate}
\item in Case $\RN{7}.1$, for the values of $\boldsymbol{\mu}$ such that the greatest LE of $f_{\boldsymbol{\mu}}$ is positive, $f_{\boldsymbol{\mu}}$ is $C^2$-close to a vector field exhibiting strange attractors.

\item in Case $\RN{7}.2$, $\mathcal{O}_{\boldsymbol{\mu}}$ is a global sink.
\end{enumerate}
\end{prop}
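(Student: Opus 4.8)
The plan is to treat Case $\RN{7}$ exactly as Cases $\RN{5}$ and $\RN{6}$ were treated: as the continuation of the Case $\RN{6}$ phase portrait across the transcritical bifurcation at $\boldsymbol{\mu}=8$, combined with the change of stability of $\mathcal{O}_{\boldsymbol{\mu}}$ at $\boldsymbol{\mu}^2_{\textrm{Hopf}}$. First I would analyse the transcritical bifurcation at $\boldsymbol{\mu}=8$. By Lemma~\ref{Lemma4}(4) the edge equilibrium $A_2=\left(\frac{8-\boldsymbol{\mu}}{14-\boldsymbol{\mu}},0,1\right)$ lies in the cube precisely for $\boldsymbol{\mu}<8$ and collapses onto $v_2=(0,0,1)$ as $\boldsymbol{\mu}\to 8^{-}$, leaving the cube for $\boldsymbol{\mu}>8$. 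Reading the sign change of the relevant eigenvalue of $Df_{\boldsymbol{\mu}}(v_2)$ from Table~\ref{tbl:Eigenv_of_vs}, the stability of $A_2$ is transferred to $v_2$, so that in Case $\RN{7}$ the vertex $v_2$ plays the role $A_2$ played in Case $\RN{6}$; in particular $v_2$ becomes the attractor of the connected component in which $A_2$ was previously a sink (cf. Proposition~\ref{prop:central_manfld_VI}).

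Next I would fix the global separating object. By Lemma~\ref{Lemma4}(5), $B_1$ persists as a hyperbolic saddle on $\inter(\sigma_4)$ throughout $\boldsymbol{\mu}\in]\frac{110}{31},10]$, so by hyperbolic continuation from Case $\RN{6}$ the set $W^s(B_1)\cap\inter([0,1]^3)$ remains a two-dimensional invariant manifold whose closure separates $\Gamma_{(2,2,2)}$ into two connected components. The only change in its boundary vertices is the departure of $v_5$: once $A_2$ leaves the cube, the boundary dynamics on the faces adjacent to $v_2$ are reorganised (Figure~\ref{fig:02_mu=1,1, extra3}, right), and the branch of the stable manifold reaching the edge through $v_5$ no longer lies in $\overline{W^s(B_1)}$, so that $\overline{W^s(B_1)}$ now contains precisely $\{v_1,v_3,v_6,v_7,B_1\}$. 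This I would confirm by the continuation argument together with the phase portraits, one component being the basin of $v_2$ and the other the one carrying $\mathcal{O}_{\boldsymbol{\mu}}$.

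Finally I would treat the two sub-cases inside the component carrying $\mathcal{O}_{\boldsymbol{\mu}}$. In Case $\RN{7}.1$, i.e. $\boldsymbol{\mu}\in]8,\boldsymbol{\mu}^2_{\textrm{Hopf}}[$, Fact~\ref{fact_g} guarantees that $\mathcal{O}_{\boldsymbol{\mu}}$ is a saddle-focus obeying the Shilnikov inequality $2\lambda_{\textrm{u}}<\lambda_{\textrm{s}}$, while Fact~\ref{hom} supplies $\boldsymbol{\mu}$-values with a positive Lyapunov exponent; the conclusion that $f_{\boldsymbol{\mu}}$ is $C^2$-close to a vector field exhibiting strange attractors then follows exactly as in Proposition~\ref{prop:central_manfld_V}, via Fact~\ref{hom} and Theorem~\ref{thrm:main}. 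In Case $\RN{7}.2$, i.e. $\boldsymbol{\mu}\in]\boldsymbol{\mu}^2_{\textrm{Hopf}},10[$, Lemma~\ref{corollary1} shows that beyond the second (supercritical) Hopf bifurcation all three eigenvalues of $Df_{\boldsymbol{\mu}}(\mathcal{O}_{\boldsymbol{\mu}})$ have negative real part and that the periodic solution $\mathcal{C}_{\boldsymbol{\mu}}$ has collapsed back into $\mathcal{O}_{\boldsymbol{\mu}}$; since no other compact invariant set persists in this component, $\mathcal{O}_{\boldsymbol{\mu}}$ is a global sink there.

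The step I expect to be the main obstacle is the second one: rigorously pinning down the vertex content of $\overline{W^s(B_1)}$ — in particular justifying the removal of $v_5$ and the claim that the two components carry exactly the stated attractors — because this requires global control of a two-dimensional invariant manifold that is accessible only through continuation from Case $\RN{6}$ and through the numerical \emph{Facts}, rather than through a closed-form description of the phase portrait. By contrast, the strange-attractor statement in $\RN{7}.1$ is not the difficulty here, since it is inherited wholesale from the Shilnikov/Mora--Viana machinery underlying Theorem~\ref{thrm:main}.
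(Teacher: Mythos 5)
Your proposal is correct and follows essentially the same route as the paper: the published proof is a terse continuity argument from the preceding cases, noting that $A_2$ has left the cube at $\boldsymbol{\mu}=8$ (so $v_2$ inherits its role as the sink of one component), that $\boldsymbol{\mu}^2_{\textrm{Hopf}}$ separates Cases $\RN{7}.1$ and $\RN{7}.2$ through the disappearance of $\mathcal{C}_{\boldsymbol{\mu}}$ and the stabilisation of $\mathcal{O}_{\boldsymbol{\mu}}$, and that the strange-attractor assertion is inherited from Proposition~\ref{prop:central_manfld_V} via the Facts and Theorem~\ref{thrm:main}. Your more detailed discussion of the vertex content of $\overline{W^s(B_1)}$ (the departure of $v_5$) goes beyond what the paper writes down, which for that step likewise relies only on continuation from Case $\RN{6}$ and the numerical phase portraits.
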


\begin{proof}
The proof of this result replicates that of Proposition \ref{prop:central_manfld_V}, except that equilibrium $A_2$ no longer exists (see Figure~\ref{fig:02_mu=1,1, extra3} (right)). Notice also that the value of separation between Cases $\RN{7}.1$ and $\RN{7}.2$ is $\boldsymbol{\mu} =\boldsymbol{\mu}^2_{\textrm{Hopf}}$, responsible for the disappearance of $\mathcal{C}_{\boldsymbol{\mu}}$. From this parameter value on, the proximity of the homoclinic cycle referred in Fact \ref{hom} is no longer valid (see Figure \ref{fig:LE}), and $\mathcal{O}_{\boldsymbol{\mu}}$ becomes  stable.

At  $\boldsymbol{\mu} =10$, the point $B_1$ collapses to $A_3$, meaning that volume of the connected component containing $\mathcal{O}_{\boldsymbol{\mu}}$ is shrinking and collapses to a point. 
In fact, at  $\boldsymbol{\mu} =10$, the points $B_1, B_2$ and $\mathcal{O}_{\boldsymbol{\mu}}$ collapse to $A_3$ (see $\boldsymbol{\mu}=6.5$ and $\boldsymbol{\mu}=8$ in Table~\ref{tbl:Int_dynamics_on_mu}).

\end{proof}

In Cases VI and VII.1, $\overline{W^s(B_1)}$ plays the role of  \emph{separatrix}: in one component, the $\omega$-limit is either $A_2$ (if it exists) or $v_2$ (if $A_2$ does not exist); in the other, the $\omega$-limit is a strange attractor or a limit cycle (see $\boldsymbol{\mu}=3.6$ and $\boldsymbol{\mu}=6.5$ in Table~\ref{tbl:Int_dynamics_on_mu}).


\section{Proof of Theorem \ref{thrm:main}}
\label{s: second part}

Recall, from Subsection \ref{ss: numerical facts}, that $I\subset \left]\boldsymbol{\mu}_{\textrm{SA}}, \boldsymbol{\mu}^2_{\textrm{Hopf}} \right[$ is a non-degenerate interval of $\boldsymbol{\mu}$-values 
 for which the greatest LE of $f_{\boldsymbol{\mu}}$ is positive (cf.  Figure~\ref{fig:LE}).  

 By Fact~\ref{hom2}, there exists an arbitrarily small  neighbourhood $\mathcal{U}\subset \mathcal{X}$ of  $f_{{\boldsymbol{\mu}}}, {\boldsymbol{\mu}}\in I$ and $g\in \mathcal{U}$ such that the flow of $g$ exhibits a homoclinic orbit to the hyperbolic continuation of   $\mathcal{O}_{{\boldsymbol{\mu}}}$ (in the $C^2$-topology). The eigenvalues of $Df_{{\boldsymbol{\mu}}} \left(\mathcal{O}_{{\boldsymbol{\mu}}} \right)$ have the form described in  Fact~\ref{fact_g}.
Reversing the time,  the previous configuration gives rise to a flow exhibiting a homoclinic cycle associated to the hyperbolic continuation of $\mathcal{O}_{{\boldsymbol{\mu}}}$, say $\gamma$, whose eigenvalues   satisfy the conditions stated in~\cite[Theorem 1.4]{homburg2002periodic}.

Define now a   one-parameter family of vector fields $(g_{\boldsymbol{\lambda}})_{\boldsymbol{\lambda}\in [-1,1]}\in \mathcal{U}$ unfolding $g$ generically\footnote{An equivalent definition of ``generic unfolding'' may be found on page 38 of Palis and Takens \cite{palis1995hyperbolicity}.} such that:
\begin{itemize}
\item $g_0\equiv g$ and
\item the invariant manifolds associated to the interior equilibrium split with non-zero speed with respect to $\boldsymbol{\lambda} $.
\end{itemize}

Let $\mathcal{T}$ be a small tubular neighbourhood of the cycle $\gamma$ and $\Sigma$ a cross section to $\gamma$. 
As illustrated in Figure \ref{fig:escolhe_1}, for $\boldsymbol{\lambda} \in [-1,1]$, let us denote by $\Pi_{\boldsymbol{\lambda}}$ the  first return map  to the compact  cross section $\Sigma\cap \mathcal{T}$, associated to the flow of $g_{\boldsymbol{\lambda}}$.

\begin{figure}[h]
	\includegraphics[width=13cm]{./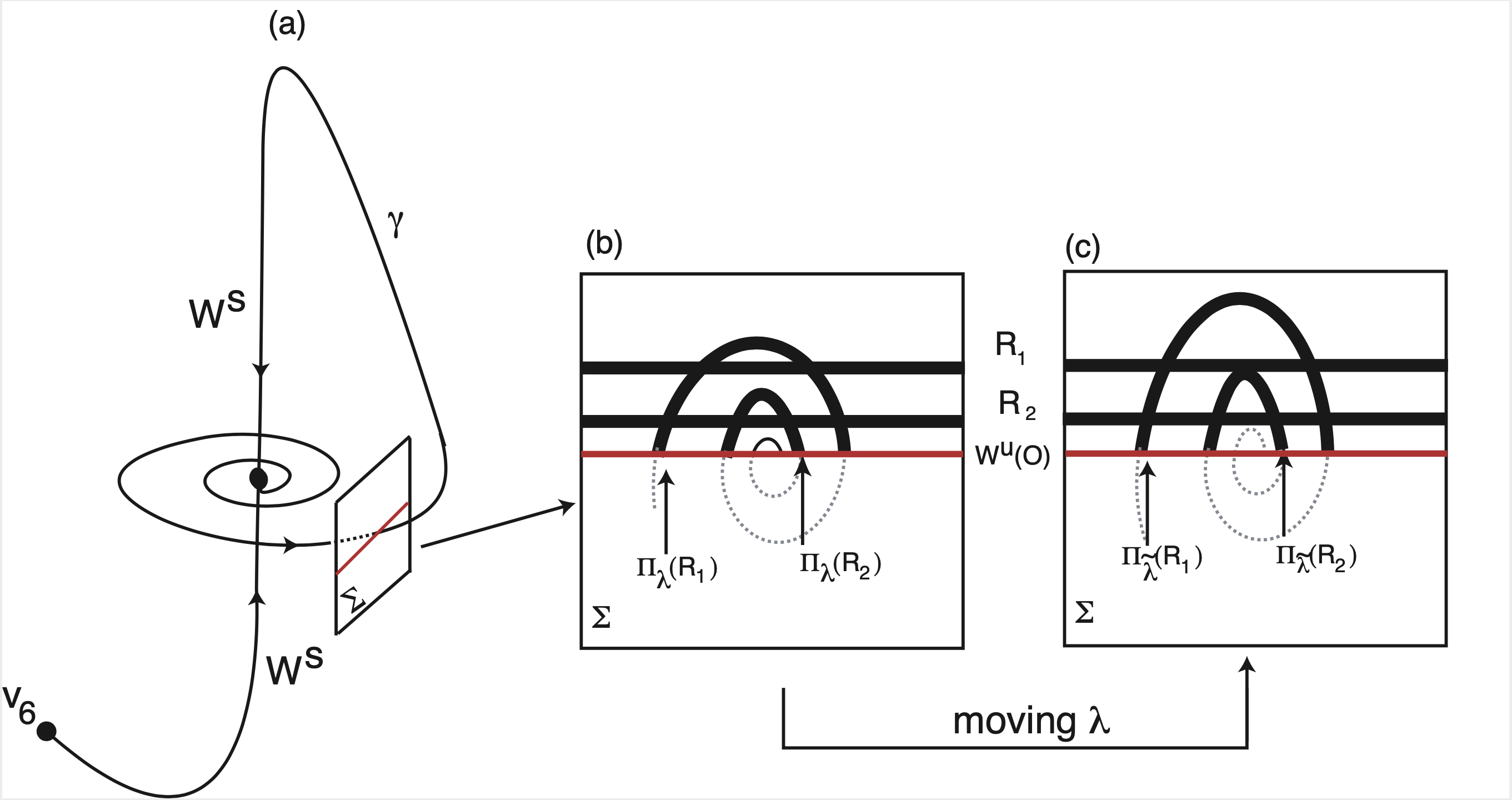}
    \caption{\small{ \emph{The homoclinic cycle $\gamma$:} scheme and shape of the first return map to the cross section $\Sigma$ for the associated to the flow of $g_{\boldsymbol{\lambda}}$, for different values of $\boldsymbol{\lambda}\in [-1,1]$ (b and c). In (b) and (c), the image, under the first return map, of the two horizontal rectangles $R_1$ and $R_2$ overlap the original rectangles.}} \label{fig:escolhe_1}
\end{figure}

Following Shilnikov \emph{et al} \cite{shilnikov1981bifurcation}, there exists a $\Pi_{0}$-invariant set of initial conditions $\Lambda_{0} \subset  \Sigma$ on which the map $\Pi_{0}$ is topologically conjugate to a full shift over an infinite number of symbols.  
By Gonchenko \emph{et al} \cite{gonchenko1996dynamical}, the set $\Lambda_{0} $ contains a sequence of hyperbolic horseshoes $(\mathcal{H}_n)_{n\in \NN}$ that are \emph{heteroclinically related}:  the unstable manifolds of the periodic orbits in $\mathcal{H}_n$ are long enough to intersect the stable manifolds of the periodic points of $\mathcal{H}_m$ (cf Figure \ref{fig:escolhe_2}), for $n, m \in \NN$. In other words,  there exist periodic solutions jumping from a strip of $\mathcal{H}_n$ to another strip of $\mathcal{H}_m$.  For $\boldsymbol{\lambda} \neq 0$, the homoclinic orbit $\gamma$ is broken, but finitely many horseshoes survive. We now use the following result:

\begin{prop}[\cite{ovsyannikov1986systems, newhouse1974diffeomorphisms}(adapted)]
With respect to the family of maps $(\Pi_{\boldsymbol{\lambda}})_{\boldsymbol{\lambda}\in [-1,1]}$, homoclinic tangencies associated to a dissipative periodic point are dense in $[-1,1]$.
\end{prop}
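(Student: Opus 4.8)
The plan is to realize the first return map $\Pi_{\boldsymbol{\lambda}}$ to $\Sigma\cap\mathcal{T}$ as a composition of a \emph{local} passage map near the hyperbolic continuation of $\mathcal{O}_{\boldsymbol{\mu}}$ and a \emph{global} transition diffeomorphism along $\gamma$, and then to read off homoclinic tangencies from the geometry of this composition as $\boldsymbol{\lambda}$ varies. First I would linearize the (time-reversed) flow near the saddle-focus; by Fact~\ref{fact_g} the relevant eigenvalues are $-\lambda_{\textrm{u}}\pm i\omega$ and $\lambda_{\textrm{s}}$ with $\lambda_{\textrm{u}},\omega,\lambda_{\textrm{s}}>0$ and $2\lambda_{\textrm{u}}<\lambda_{\textrm{s}}$, so $\gamma$ is a homoclinic orbit to a saddle-focus with a two-dimensional spiralling stable manifold and a one-dimensional unstable manifold. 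In adapted coordinates the Dulac (local) map carries an incoming cross section to an outgoing one with the characteristic logarithmic spiral: a radial contraction of order $r^{\lambda_{\textrm{u}}/\lambda_{\textrm{s}}}$ together with an angular twist of order $-(\omega/\lambda_{\textrm{s}})\ln r$. Composing with the global map — which depends on $\boldsymbol{\lambda}$ and whose induced manifold splitting has nonzero $\boldsymbol{\lambda}$-derivative by the generic unfolding hypothesis — produces a $\Pi_{\boldsymbol{\lambda}}$ whose image of a horizontal strip is an infinite family of nested, folded spiral tongues accumulating on $W^s_{\loc}(\mathcal{O}_{\boldsymbol{\mu}})\cap\Sigma$.

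Next I would use this spiral structure to locate the tangencies. The horseshoes $(\mathcal{H}_n)_{n\in\NN}$ furnished by Shilnikov's theorem each carry saddle periodic points that are \emph{dissipative}: because the orbits linger near the equilibrium, the local map is strongly area-contracting, and the inequality $2\lambda_{\textrm{u}}<\lambda_{\textrm{s}}$ forces the product of the Floquet multipliers of these periodic points to have modulus strictly below $1$ (this is exactly the hypothesis of the cited Theorem~1.4 of Homburg). I would then track the outer fold of the $n$-th tongue, which is a fold of the iterated image of the unstable manifold of a periodic point of $\mathcal{H}_n$, as $\boldsymbol{\lambda}$ ranges over $[-1,1]$. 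The nonzero splitting speed makes this fold sweep monotonically across the local stable foliation, and at isolated parameter values the contact is quadratic, i.e. a homoclinic tangency associated to a dissipative periodic point. Since the tongues are infinitely many and the logarithmic spiral reproduces the same crossing pattern at every scale, such tangency parameters occur throughout the range, not merely near the unperturbed value.

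Finally I would promote ``occurring throughout'' to ``dense'' through the Newhouse phenomenon: around each quadratic homoclinic tangency at a dissipative saddle there is an open parameter interval inside which a residual set of parameters again exhibits homoclinic tangencies. Taking the union of these Newhouse intervals over all the horseshoes $\mathcal{H}_n$ and all their periodic points yields a set of tangency parameters whose closure is all of $[-1,1]$, which is the assertion. Here the adapted argument of Ovsyannikov--Shilnikov supplies the existence and distribution of the tangencies along the spiral, while Newhouse's construction supplies their density.

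The main obstacle I expect is the quantitative control of the composed return map needed to make the geometric picture rigorous: one must verify that the folds of the iterated unstable manifold meet the stable foliation \emph{quadratically and nondegenerately}, with nonzero unfolding speed, and that the associated periodic points are genuinely dissipative, all of which rest on delicate estimates of the logarithmic terms in the Dulac map and on the interaction between the spiral and the $\boldsymbol{\lambda}$-dependent global map. Propagating these estimates \emph{uniformly} across the whole sequence $(\mathcal{H}_n)_{n\in\NN}$ — so that the tangency parameters really become dense in $[-1,1]$ rather than merely clustering at the homoclinic value — is the subtle point, and it is precisely where the combination of the two adapted sources does the work.
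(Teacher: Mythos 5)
The paper does not actually prove this proposition: it is imported, with the tag ``(adapted)'', from Ovsyannikov--Shilnikov and Newhouse, and is used as a black box in the proof of Theorem~A. So there is no in-paper argument to compare yours against line by line. That said, your sketch is a faithful reconstruction of the standard argument behind those citations: the decomposition of $\Pi_{\boldsymbol{\lambda}}$ into a Dulac-type local map with logarithmic spiralling plus a global diffeomorphism, the resulting nested spiral tongues accumulating on $W^s_{\loc}\cap\Sigma$, and the observation that the $\boldsymbol{\lambda}$-dependent splitting sweeps the folds of the iterated unstable manifolds of the periodic points of the horseshoes $(\mathcal{H}_n)_{n\in\NN}$ across the stable foliation, producing quadratic tangencies. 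This is exactly the mechanism the paper is invoking (it appears implicitly in Figures~\ref{fig:escolhe_1} and~\ref{fig:escolhe_2} and the surrounding discussion of heteroclinically related horseshoes), so in spirit your route coincides with the intended one.

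Two points deserve correction. First, your final ``promotion to density'' via Newhouse is logically misplaced: Newhouse intervals give you, \emph{near a parameter where a quadratic tangency already unfolds generically}, an open interval containing a residual set of further tangency parameters. That does not upgrade ``infinitely many tangency parameters'' to ``dense in $[-1,1]$''; density has to come from the sweeping argument itself, namely from showing that \emph{every} subinterval of $[-1,1]$ contains a tangency parameter. This is what the Ovsyannikov--Shilnikov analysis actually delivers: because the spiral tongues accumulate at all scales and the angular speed of the deep folds diverges logarithmically, an arbitrarily small variation of $\boldsymbol{\lambda}$ already forces some high-order fold through a tangency. Once you have that, density is immediate and the Newhouse step is not needed for this proposition (it reappears only in the subsequent Remark about sinks). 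Second, be careful with the time orientation in your dissipativity claim: the divergence at the equilibrium is $2\lambda_{\textrm{u}}-\lambda_{\textrm{s}}<0$ for the original flow but $\lambda_{\textrm{s}}-2\lambda_{\textrm{u}}>0$ for the time-reversed flow that the paper uses to produce the configuration of \cite{homburg2002periodic}; the product of Floquet multipliers of the long periodic orbits is below $1$ only for the return map of the volume-contracting orientation, so you must fix which flow $\Pi_{\boldsymbol{\lambda}}$ belongs to before asserting that the periodic points are dissipative. With these two adjustments your outline matches the intended (cited) argument.
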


Therefore, for infinitely many parameters $\boldsymbol{\lambda} \in [-1,1]$ we may find a dissipative periodic point $c_1\in \mathcal{H}_n$, $n\in \NN$,   so that its stable and unstable manifolds 
have a homoclinic tangency.  This tangency is quadratic and breaks generically. 
Although the original tangencies are destroyed, when the
parameter $\boldsymbol{\lambda}$ varies,  new tangencies arise nearby. The family $(\Pi_{\boldsymbol{\lambda}})_{{\boldsymbol{\lambda}}}$ may be seen an unfolding
of a map exhibiting a quadratic homoclinic tangency; thus one can apply the results by Mora and Viana \cite{mora1993abundance}, which guarantee the existence of a
positive Lebesgue measure set $E\subset[-1,1]$ of parameter values  such that for $\boldsymbol{\lambda} \in E$ the diffeomorphism $\Pi_{\boldsymbol{\lambda}}$
exhibits a H\'enon-type strange attractor near the orbit of tangency.
Theorem \ref{thrm:main} is proved.

\begin{figure}[h]
	\includegraphics[width=13cm]{./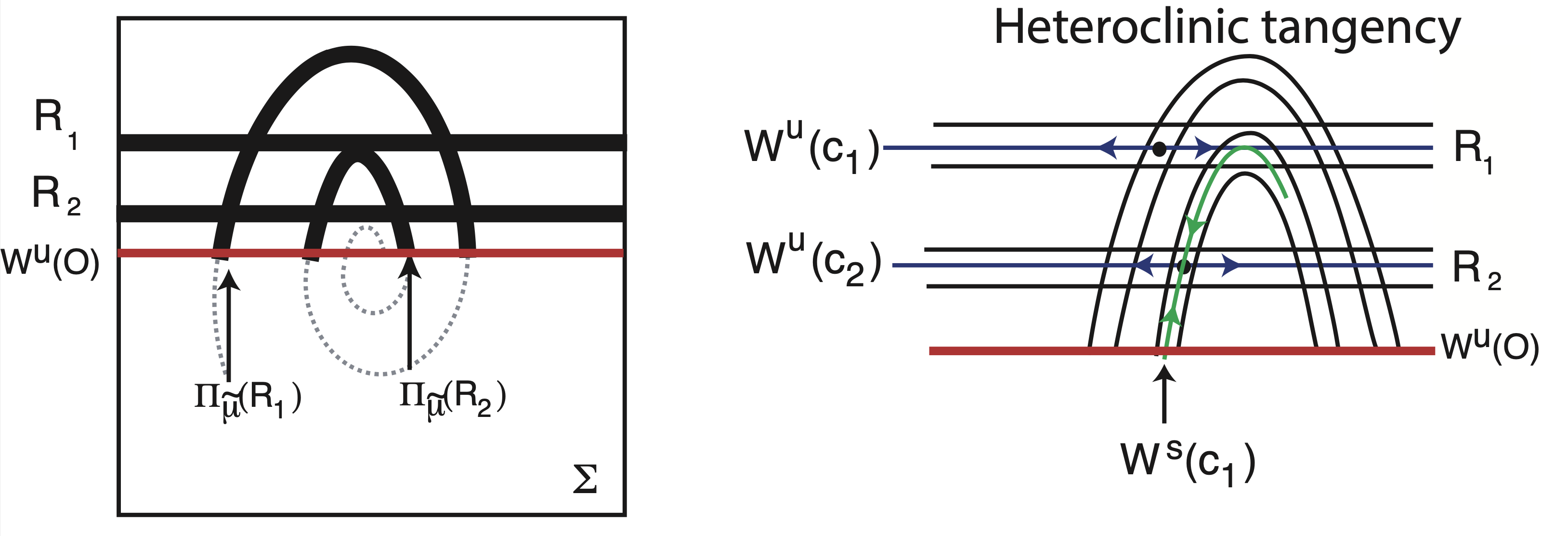}
    \caption{\small{\emph{Whiskers of the horseshoes:} sketch of the heteroclinic tangencies associated to two  saddles associated to the horseshoe $\Lambda_0$. The homoclinic classes associated to the horseshoes are not disjoint. }} \label{fig:escolhe_2}
\end{figure}

\begin{figure}[h]
\hspace{-10mm}
    \begin{subfigure}[t]{0.4\textwidth}\centering
        \includegraphics[width=6.5cm]{./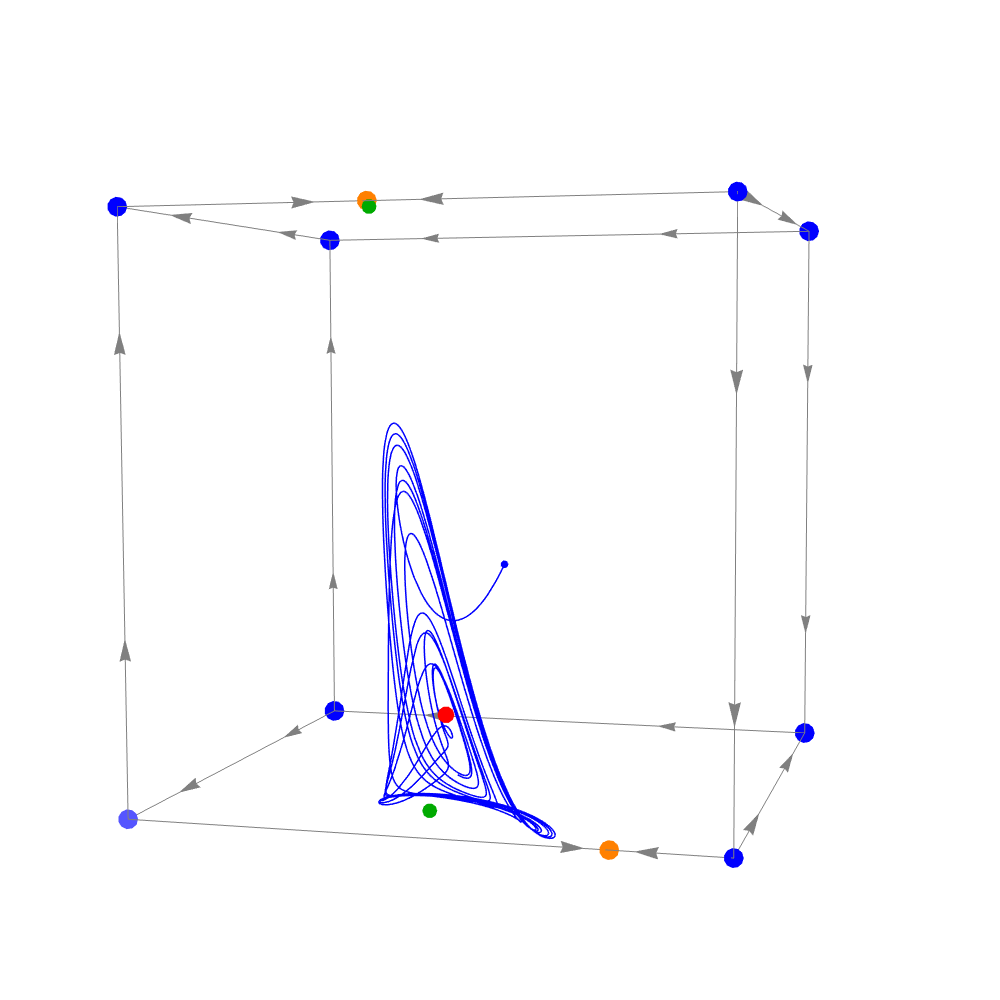}
        \vspace{-1.1cm}
        \label{fig:mu=3,6_t=40}
    \end{subfigure}
    \quad \quad
    \begin{subfigure}[t]{0.4\textwidth}\centering
        \includegraphics[width=6.5cm]{./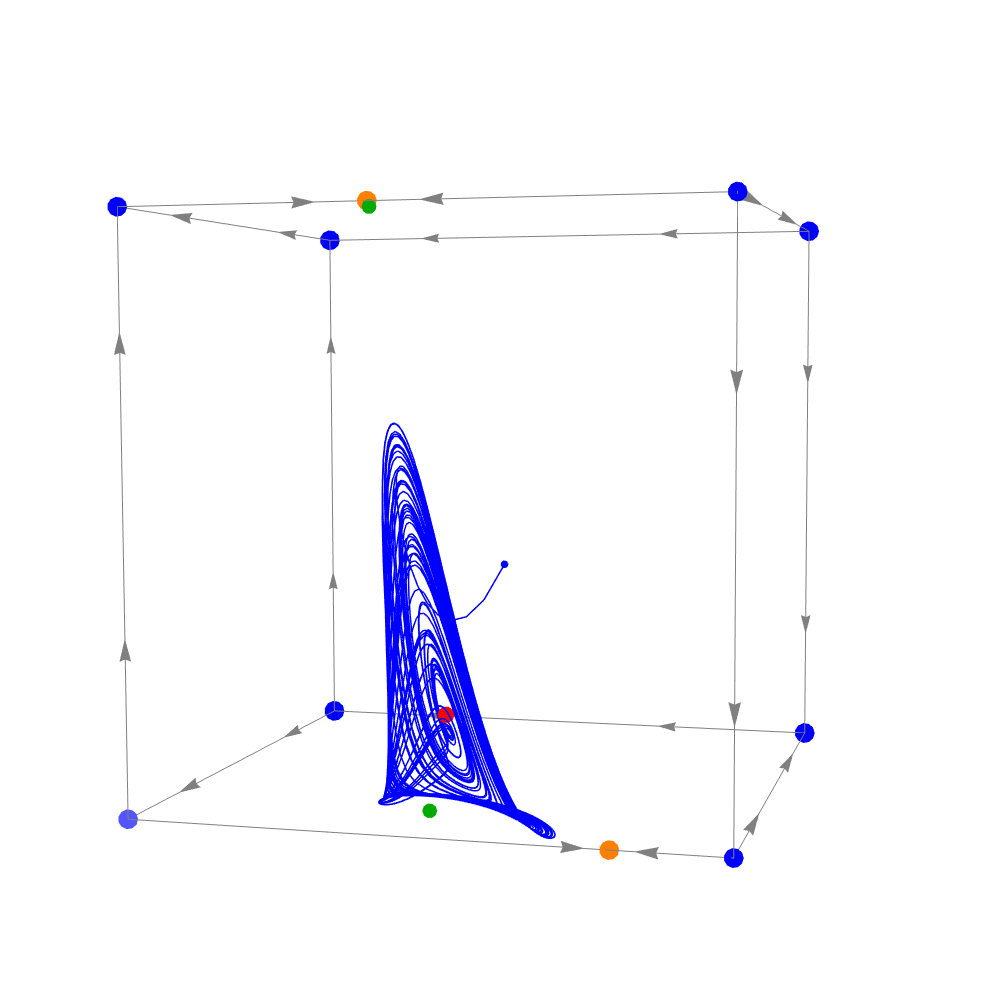}
        \vspace{-1.1cm}
        \label{fig:mu=3,6_t=220}
    \end{subfigure}
    \vspace{-.2cm}
    \caption{\small{\textit{Strange attractor:} plot of one orbit (in blue), the interior equilibrium (in red), and all the boundary equilibria (in the corresponding colors) of system~\eqref{eq:poly_rep_8}, for $\boldsymbol{\mu}=3.6$ with  $t\in [0,40]$ (left), and $t\in [0,170]$ (right).}}
    \label{fig:03_mu=3,6}
\end{figure}

\begin{rem}
Besides the existence of strange attractors in the family  $(g_{\boldsymbol{\lambda}})_{\boldsymbol{\lambda}\in[-1,1]}$, we may  apply Newhouse's results in the version   for one-parameter families \cite[Appendix 4]{palis1995hyperbolicity}, and conclude the existence of infinitely many values of  $\boldsymbol{\lambda} \in [-1,1]$ for which the associated flow exhibits a sink. 
\end{rem}


\section{Discussion}
\label{s: discussion}
In this section, we describe the phenomenological scenario behind the formation of strange attractors for the differential equation~\eqref{eq:poly_rep_8}, relating it with others in the literature.

\subsection{Attracting whirl\-pool: a phenomenological description}\label{subsec:att_whirlpool}

We describe the phenomenological scenario responsible for the  appearance of the strange attractor of Theorem \ref{thrm:main}.  We go back to the works \cite{shilnikov1981bifurcation, shil1995bifurcation} where a similar scenario was proposed for one-parameter families of three-dimensional flows in the context of an atmospheric model. 

 At  $\mu=\boldsymbol{\mu}_{\textrm{Belyakov}}$, the stable  interior  equilibrium $\mathcal{O}_{\boldsymbol{\mu}}\in \Sigma_\mu$   becomes focal. At $\mu=\boldsymbol{\mu}_{\textrm{Hopf}}>\boldsymbol{\mu}_{\textrm{Belyakov}}$, it  undergoes a supercritical Andronov-Hopf bifurcation, becoming an unstable saddle-focus, and a stable invariant curve $\mathcal{C}_{\boldsymbol{\mu}}$ is born in its neighborhood. The two-dimensional unstable invariant manifold of $\mathcal{O}_{\boldsymbol{\mu}}$, $W^u(\mathcal{O}_{\boldsymbol{\mu}})\subset \Sigma_{\boldsymbol{\mu}}$, is a topological disc limited by $\mathcal{C}_{\boldsymbol{\mu}}$.

After the emergence of the saddle-focus $B_3$ ($\boldsymbol{\mu}>b_3$ -- see Table \ref{tbl:Eigenv_of_Bs}), the set $\Sigma_{\boldsymbol{\mu}}$ starts rotating around $W^u(B_3)$ due to the complex (non-real) eigenvalues of $Df_{\boldsymbol{\mu}} \left( B_3 \right)$. When $\boldsymbol{\mu}$ increases further, the periodic solution  $\mathcal{C}_{\boldsymbol{\mu}}\subset \Sigma_{\boldsymbol{\mu}}$ approaches the cube's boundary and  winds  around $W^u(B_3)$, forming a structure similar to the so-called \emph{Shilnikov whirlpool} \cite{shilnikov1981bifurcation, shil1995bifurcation}.
For the $\boldsymbol{\mu}$ values associated to Cases IV and V, the equilibria $B_2$ and $B_3$  of this whirlpool are pulled to the face $\sigma_4$ and the orbits lying on the connected component of $\inter\left(\Gamma_{(2,2,2)}\right) \backslash \overline{W^s(B_1)}$ containing the interior equilibrium are tightened by this whirlpool. Further increasing $\boldsymbol{\mu}$, the  size  of the whirlpool decreases, and finally, at $\boldsymbol{\mu}\gtrsim\boldsymbol{\mu}_{\textrm{SA}}$, the set $W^s(\mathcal{O}_{\boldsymbol{\mu}})$ is arbitrarily close to the screw manifold $W^u(\mathcal{O}_{\boldsymbol{\mu}})$.
Fact \ref{hom2} claims the existence of a  homoclinic cycle of Shilnikov type for  a vector field $C^2$-close to $f_{\boldsymbol{\mu}_{\textrm{SA}}}$. The argument that supports this approximation, explained in Subsection \ref{ss: digestive}, is valid for the parameter values in the interval $] \boldsymbol{\mu}_{\textrm{SA}},\boldsymbol{\mu}^2_{\textrm{Hopf}} [$ for which the greatest LE of $f_{\boldsymbol{\mu}}$ is positive.

As $ \boldsymbol{\mu}$ evolves,  $\overline{W^s(B_1)}$ divides the cube in two connected components. The one which contains the strange attractor is shrinking, meaning that the volume of initial conditions which realize chaos is vanishing. In terms of the EGT, this would mean that, although system \eqref{eq:poly_rep_8} may exhibit chaos, the initial strategies that realize it are very close and cannot go too far (in an appropriate metric).

Our findings are different from those of \cite{shilnikov1981bifurcation} in the sense that, in  their case, the periodic solution coming from the Hopf Bifurcation becomes focal, playing the role of ``our'' saddle-focus $B_3$.

This type of mechanisms, the so called \emph{whirl\-pool attractor} may be seen as the universal scenario for the formation of Shilnikov cycles in a typical system \cite{chua2001methods} -- see for example the Rossler model and the ``new'' Lorenz two-parameter model \cite[Section 5]{shil1995bifurcation}.

\subsection{Open problem}
\label{ss: open}
Simulations with $\textit{Mathematica Wolfram}^{\circledR}$ suggest the existence of a homoclinic orbit to $\mathcal{O}_{\boldsymbol{\mu}}$, for $\boldsymbol{\mu}\approx \boldsymbol{\mu}_{SA}$ and $\boldsymbol{\mu}\approx 3.6$.
Nevertheless, for the same parameter values,
the homotopy method used by \textit{MatCont} indicates that the invariant manifolds of $\mathcal{O}_{\boldsymbol{\mu}}$ are close but do not touch.
This is why,  we have assumed in Fact \ref{hom2} the existence of a vector field exhibiting a homoclinic nearby.
It is an open problem to prove whether the Shilnikov homoclinic orbit exists in the family $\left( f_{\boldsymbol{\mu}}\right)_{\boldsymbol{\mu}}$, however  its arbitrary proximity to an element of the family $\left( f_{\boldsymbol{\mu}}\right)_{\boldsymbol{\mu}}$ is enough to trap the observable chaotic dynamics. We conjecture that, up to a change of coordinates,  the families $(g_{\boldsymbol{\lambda}})_{\boldsymbol{\lambda} }$ and $(f_{\boldsymbol{\mu}})_{\boldsymbol{\mu}}$ coincide.
 We are trying hard to  find out an answer for this problem.

\subsection{Final Remark}
 \label{ss: final remark}
We  introduced a one-parameter family of polymatrix replicators defined on $\Gamma_{(2,2,2)}$ and study its bifurcations in detail.
 In an open interval of the  parameter space, we prove the existence of a vector field, $C^2$-close to elements of the family  $(f_{\boldsymbol{\mu}})_{\boldsymbol{\mu}}$,  exhibiting a homoclinic cycle to a saddle-focus,  responsible for the emergence of suspended horseshoes and persistent observable chaos  (H\'enon-type in the sense of \cite{mora1993abundance}).
This family is structurally stable in the sense that small perturbations of the non-zero entries of $P_{\boldsymbol{\mu}}$ do not qualitatively change the dynamics.

The mechanism responsible for the emergence of chaos seems to be the same for a large class of examples: we obtain an attracting limit cycle (from a supercritical Hopf bifurcation) limiting the unstable manifold of an unstable focus. The stable manifold of the  limit cycle starts winding around a focus and  accumulates on the stable manifold of the interior equilibrium, undergoing successive saddle-node and period-doubling bifurcations.
This criterion relies on Shilnikov's results \cite{shilnikov1965case}. It creates strange attractors that may be seen as a suspension of H\'enon-type diffeomorphisms. In particular, when the parameter varies, on a typical cross section,   topological horseshoes emerge linked with sinks \cite{newhouse1974diffeomorphisms}.

The reduction of a polymatrix replicator to dimension three may be carried out just in two cases, 
$\Gamma_{(2,2,2)}$ (our model) and $\Gamma_{(3,2)}$ (the population is divided in two groups, one with three available strategies and the other with two).
The search of strange attractors in the second case may be performed in the same way as we have done in the present article.

The existence of strange attractors in polymatrix replicators has profound implications in the setting of the EGT.
Observable chaos is the result of a strategy evolution in which individuals are constantly changing their plans of action in an unforeseeable way.
The existence of chaos for model \eqref{eq:poly_rep} is relevant  to maintain the complexity and diversity of strategies, in particular their high unpredictability \cite{kaneko1993chaos}. 

Although a complete understanding of the   bifurcation diagram associated to \eqref{eq:poly_rep_8} and the mechanisms underlying the dynamical changes is out of reach, we uncover complex patterns for the one-parameter family under analysis, using a combination of theoretical tools and computer simulations. 

The dynamics on the interior of the cube is strongly determined by the dynamics on the boundary. A lot more needs to be done before the dynamics  of polymatrix replicators  is completely understood.

\section*{Acknowledgements}
We are grateful to Hil Meijer (University of Twente, Netherlands) and Willy Govaerts (University of Ghent, Belgium) for the numerical simulations in \textit{MatCont}.

The authors are indebted to J. P. Gaiv\~ao for his remarks and to Pedro Duarte for the development of our Mathematica code. 
They are also grateful to the two reviewers for the constructive comments, corrections and suggestions which helped to improve the readability of the article.
The first author was supported by
the Project CEMAPRE/REM -- UIDB /05069/2020 financed by FCT/MCTES through national funds.

The second author was partially supported by CMUP (Project reference: UID/MAT/ 00144/2019), which is funded by FCT with national (MCTES) and European structural funds through the programs FEDER, under the partnership agreement PT2020. He also acknowledges financial support from Program INVESTIGADOR FCT (IF/ 0107/ 2015).


\bibliographystyle{unsrt}
\bibliography{Bibfile}

\pagebreak

\appendix

\section{Tables}
\label{tables_appendix}

\begin{table}[h]
\begin{small}
\begin{tabular}{|c|c|c|c|c|c|} \toprule
Eq.       & Eigenvalues  	& $\boldsymbol{\mu}$		&  On edge        & On $\sigma_3$	& On $\sigma_5$  \\ \toprule 
$A_1$  & \small{$\left\{ \frac{10 (\boldsymbol{\mu} +38)}{14-\boldsymbol{\mu}},\frac{4 (5 \boldsymbol{\mu} +281)}{\boldsymbol{\mu} -14},\frac{26 \left(\boldsymbol{\mu} ^2-2 \boldsymbol{\mu} -168\right)}{-(\boldsymbol{\mu} -14)^2} \right\} $}
		   & $\left[ -\frac{2938}{95}, -12 \right[$ & $-$ & $-$ & $+$ \\ \bottomrule
           & 						 &     					  			    &  On edge        & On $\sigma_4$	 & On $\sigma_5$  \\ \toprule
\multirow{5}{*}{$A_2$} & \multirow{5}{*}{\small{$\left\{ \frac{6 (\boldsymbol{\mu} +6)}{\boldsymbol{\mu} -14},\frac{31 \boldsymbol{\mu} -110}{\boldsymbol{\mu} -14},\frac{6 \left(\boldsymbol{\mu} ^2-22 \boldsymbol{\mu} +112\right)}{-(\boldsymbol{\mu} -14)^2} \right\}$}}
				& $\left[ -\frac{2938}{95}, -6 \right[$ 	& $-$ & $+$ & $+$ \\ \cmidrule{3-6}
			&	& $\{ -6 \}$ 											& $-$ & $+$ & $0$ \\ \cmidrule{3-6}
			&	& $\left] -6, \frac{110}{31} \right[$ 		& $-$ & $+$ & $-$ \\ \cmidrule{3-6}
			&	& $\{ \frac{110}{31} \}$ 							& $-$ & $0$ & $-$ \\ \cmidrule{3-6}
			&	& $\left] \frac{110}{31}, 8 \right[$  		& $-$ & $-$ 	& $-$ \\ \bottomrule
	        & 						 & 								    &  On edge        & On $\sigma_4$ 	& On $\sigma_6$  \\ \toprule
$A_3$  & \small{$\left\{ \frac{27 (\boldsymbol{\mu} -10)}{14-\boldsymbol{\mu}},\frac{10 (\boldsymbol{\mu} -10)}{\boldsymbol{\mu} -14},\frac{2 \left(\boldsymbol{\mu} ^2-26 \boldsymbol{\mu} +168\right)}{-(\boldsymbol{\mu} -14)^2} \right\}$}
		   & $\left[ -\frac{2938}{95}, 10 \right]$ 	& $-$ & $-$ & $+$ \\ \bottomrule
            & 					  	& 								  	    &  On edge        & On $\sigma_3$ 	& On $\sigma_6$  \\ \toprule
\multirow{3}{*}{$A_4$} 	& \multirow{3}{*}{\small{$\left\{ \frac{4 (4 \boldsymbol{\mu} +241)}{14-\boldsymbol{\mu}},\frac{2 (7 \boldsymbol{\mu} +122)}{14-\boldsymbol{\mu}},\frac{22 \left(\boldsymbol{\mu} ^2-6 \boldsymbol{\mu} -112\right)}{-(\boldsymbol{\mu} -14)^2} \right\}$}}
				& $\left[ -\frac{2938}{95}, -\frac{122}{7} \right[$ 	& $-$	& $+$	& $-$	\\ \cmidrule{3-6}
			&	& $\{ -\frac{122}{7} \}$ 										 	& $-$	& $+$	& $0$	\\ \cmidrule{3-6}
			&	& $\left] -\frac{122}{7}, -8 \right[$ 					 	& $-$	& $+$	& $+$	\\ \bottomrule
\end{tabular}
\end{small}
\vspace{.3cm}
        \captionof{table}{\small{Eigenvalues of equilibria $A_1$, $A_2$, $A_3$, and $A_4$ for system~\eqref{eq:poly_rep_8}, on the corresponding edges and faces (pointing to the interior), where the signs $(-)$, $(0)$, and $(+)$ mean that the eigenvalues are real negative, zero, or positive, respectively. }}
        \label{tbl:Eigenv_of_As}
\end{table}

\newpage

\begin{table}[h]
\begin{small}
\begin{tabular}{|c|c|c|c|c|} \toprule
Eq.               & Eigenvalues & $\boldsymbol{\mu}$      			    &  On face        & On the interior  \\ \toprule
\multirow{2}{*}{$B_1$} & \multirow{2}{*}{$\left\{ \frac{37 (\boldsymbol{\mu} -10)}{\boldsymbol{\mu} +40}, z_1, \bar{z}_1 \right\}$}
									& $\{ \frac{110}{31} \}$ \,\,\,$(B_1=A_2)$ 				& $(-,0)$ 	& $(-)$ \\ \cmidrule{3-5}
                                  &  & $\left] \frac{110}{31}, 10 \right] $    						& $(-,+)$  & $(-)$ \\ \midrule

\multirow{3}{*}{$B_2$} & \multirow{3}{*}{$\left\{ \frac{95 \boldsymbol{\mu} +2938}{2 (\boldsymbol{\mu} +86)}, z_2, \bar{z}_2 \right\}$}
									& $\left[ -\frac{2938}{95}, b_2 \right[ \cup \left[ -12,-6 \right[ $ & $(-,-)$ 		& $(+)$ \\ \cmidrule{3-5}
								&	& $\left[ b_2, -12 \right[ $  														 & $(-,-)_\textbf{C}$ & $(+)$ \\ \cmidrule{3-5}
								&	& $\{ -6 \}$ \,\,\,$(B_2=A_2)$ 													 & $(-,0)$ 		& $(+)$ \\ \midrule  

\multirow{3}{*}{$B_3$} & \multirow{3}{*}{$\left\{ \frac{109 (10-\boldsymbol{\mu})}{2 (\boldsymbol{\mu} +86)}, z_3, \bar{z}_3 \right\}$}
									& $\{ -\frac{122}{7} \}$ \,\,\,$(B_3=A_4)$ 				& $(-,0)$ 		& $(+)$ \\ \cmidrule{3-5}
								&  & $\left] -\frac{122}{7}, b_3 \right]$ 						& $(-,-)$ 		& $(+)$ \\ \cmidrule{3-5}								
                                  &  & $\left[ b_3, 10 \right] $    										& $(-,-)_\textbf{C}$ 	& $(+)$  \\ \bottomrule
\end{tabular}
\end{small}

\begin{tiny}
\begin{align*}
z_1 &= \frac{ 8700-11240 \boldsymbol{\mu} +937 \boldsymbol{\mu}^2+\sqrt{ -7052310000+1872624000 \boldsymbol{\mu} +179361400 \boldsymbol{\mu}^2 -34941760 \boldsymbol{\mu}^3 + 543169 \boldsymbol{\mu}^4 }}{8 (\boldsymbol{\mu} +40)^2} \\
z_2 &= \frac{3 \left( -8084+164 \boldsymbol{\mu} + 3 \boldsymbol{\mu}^2+\sqrt{ 259862416+40284768 \boldsymbol{\mu} +1909912 \boldsymbol{\mu}^2 +31704 \boldsymbol{\mu}^3 + 169 \boldsymbol{\mu}^4}\right)}{2 (\boldsymbol{\mu} +86)^2} \\
z_3 &= \frac{ -19436 +892 \boldsymbol{\mu} + 13 \boldsymbol{\mu}^2+\sqrt{ -3449723504 -16764064 \boldsymbol{\mu} +27270168 \boldsymbol{\mu}^2 +906392 \boldsymbol{\mu}^3
+ 6889 \boldsymbol{\mu}^4 }}{2 (\boldsymbol{\mu} +86)^2}
\end{align*}
\end{tiny}

        \captionof{table}{\small{Eigenvalues of equilibria $B_1$, $B_2$, and $B_3$ for system~\eqref{eq:poly_rep_8}, on the corresponding faces and pointing to the interior, where $b_2\approx-21.9$ and $b_3\approx-14.22$. The signs   $(-)$, $(0)$, and $(+)$ mean that the eigenvalues are real negative, zero, or positive, respectively, and $(-,-)_\textbf{C}$ means that the eigenvalues are conjugate (non-real) with negative real part. 
}}
        \label{tbl:Eigenv_of_Bs}
\end{table}

\newpage

\begin{table}[h]
\begin{small}
\begin{tabular}{|c|c|c|c|} \toprule
Eq.  & Eigenvalues & $\boldsymbol{\mu}$      			    &  Analysis  \\ \toprule
$v_1$          & $\left\{ -10, 27, 12-\boldsymbol{\mu} \right\}$ & $\left[-\frac{2938}{95}, 10 \right]$ & $(-,+,+)$  \\ \midrule

\multirow{3}{*}{$v_2$} & \multirow{3}{*}{$\left\{ -23, -14, 8-\boldsymbol{\mu} \right\}$} & $\left[-\frac{2938}{95}, 8 \right[$ & $(-,-,+)$  \\ \cmidrule{3-4}
	& & $\{ 8\} $  & $(-,-,0)$  \\ \cmidrule{3-4}
	& & $\left] 8, 10 \right] $  & $(-,-,-)$ \\ \midrule

\multirow{3}{*}{$v_3$} & \multirow{3}{*}{$\left\{ 6, 38, -8-\boldsymbol{\mu} \right\}$} & $\left[-\frac{2938}{95}, -8 \right[$ & $(+,+,+)$ \\ \cmidrule{3-4}
	& & $\{ -8\} $  & $(0,+,+)$ \\ \cmidrule{3-4}
	& & $\left] -8, 10 \right] $  & $(-,+,+)$ \\ \midrule
								
\multirow{3}{*}{$v_4$} & \multirow{3}{*}{$\left\{ -34, 10, -12-\boldsymbol{\mu} \right\}$} & $\left[-\frac{2938}{95}, -8 \right[$ & $(-,+,+)$ \\ \cmidrule{3-4}
	& & $\{ -12 \} $  & $(-,0,+)$  \\ \cmidrule{3-4}
	& & $\left] -12, 10 \right] $  & $(-,-,+)$ \\ \midrule

$v_5$  & $\left\{ -27, 2, 10 \right\}$     & $\left[-\frac{2938}{95}, 10 \right]$ & $(-,+,+)$  \\ \midrule

$v_6$  & $\left\{ 6, 6, 31 \right\}$         & $\left[-\frac{2938}{95}, 10 \right]$ & $(+,+,+)$  \\ \midrule

$v_7$  & $\left\{ -14, -16, 22 \right\}$ & $\left[-\frac{2938}{95}, 10 \right]$ & $(-,-,+)$  \\ \midrule

$v_8$  & $\left\{ -10, 20, 26 \right\}$   & $\left[-\frac{2938}{95}, 10 \right]$ & $(-,+,+)$  \\ \bottomrule																
\end{tabular}
\end{small}
\vspace{.3cm}
        \captionof{table}{\small{Real eigenvalues of the vertices $v_1, \dots, v_8$ for system~\eqref{eq:poly_rep_8}, where  $(-)$, $(0)$, and $(+)$ mean that they are negative, zero, or positive, respectively.}}
        \label{tbl:Eigenv_of_vs}
\end{table}

\newpage

\section{Summarizing movie of the boundary dynamics}
\label{bd_appendix}

\begin{table}[h]
\begin{tabular}{|c|c|c|} \toprule
\includegraphics[width=4cm]{./figures/Int_1.png} & \includegraphics[width=4cm]{./figures/Int_2.png}  & \includegraphics[width=4cm]{./figures/Int_3.png} \\[-4mm]
\small{$(\RN{1})$} & \small{$(\RN{2})$} & \small{$(\RN{3})$} \\[1mm]
\small{$\boldsymbol{\mu} \in \left[-\frac{2938}{95}, -\frac{122}{7} \right]$} & \small{$\boldsymbol{\mu} \in \left] -\frac{122}{7}, -12 \right]$} &
\small{$\boldsymbol{\mu} \in \left] -12, -8 \right]$} \\ \midrule

\includegraphics[width=4cm]{./figures/Int_4.png} & \includegraphics[width=4cm]{./figures/Int_5.png} & \includegraphics[width=4cm]{./figures/Int_6.png} \\[-4mm]
\small{$(\RN{4})$} & \small{$(\RN{5})$} & \small{$(\RN{6})$} \\[1mm]
\small{$\boldsymbol{\mu} \in \left] -8, -6 \right]$} & \small{$\boldsymbol{\mu} \in \left] -6, \frac{110}{31} \right]$} & \small{$\boldsymbol{\mu} \in \left] \frac{110}{31}, 8 \right]$} \\ \midrule

\includegraphics[width=4cm]{./figures/Int_7.png} &  & \\[-4mm]
\small{$(\RN{7})$} &  & \\[1mm]
\small{$\boldsymbol{\mu} \in \left] 8, 10 \right]$} &  & \\ \bottomrule
\end{tabular}
	\vspace{.3cm}
        \captionof{table}{\small{Illustration of the dynamics on the cube's boundary on each interval of $\boldsymbol{\mu}$ in different cases of Table \ref{tbl:cases}.}}
        \label{tbl:Bd_dynamics_on_mu}
\end{table}

\newpage

\section{Summarizing movie of the interior dynamics}
\label{int_appendix}

\begin{table}[h]
\begin{tabular}{|c|c|c|} \toprule
\includegraphics[width=4cm]{./figures/04_mu=-20_01.png} & \includegraphics[width=4cm]{./figures/01_mu=-17,5.png}  & \includegraphics[width=4cm]{./figures/01_mu=-14_01.png} \\[-4mm]
\small{$\boldsymbol{\mu}=-20$} & \small{$\boldsymbol{\mu}=-17.5$} &
\small{$\boldsymbol{\mu}=-14$} \\ \midrule

\includegraphics[width=4cm]{./figures/01_mu=-8,5.png} & \includegraphics[width=4cm]{./figures/01_mu=-7.png} & \includegraphics[width=4cm]{./figures/01_mu=1,1_01.png} \\[-4mm]
\small{$\boldsymbol{\mu}=-8.5$} & \small{$\boldsymbol{\mu}=-7$} & \small{$\boldsymbol{\mu}=1.1$} \\ \midrule

\includegraphics[width=4cm]{./figures/01_mu=3,6.png} & \includegraphics[width=4cm]{./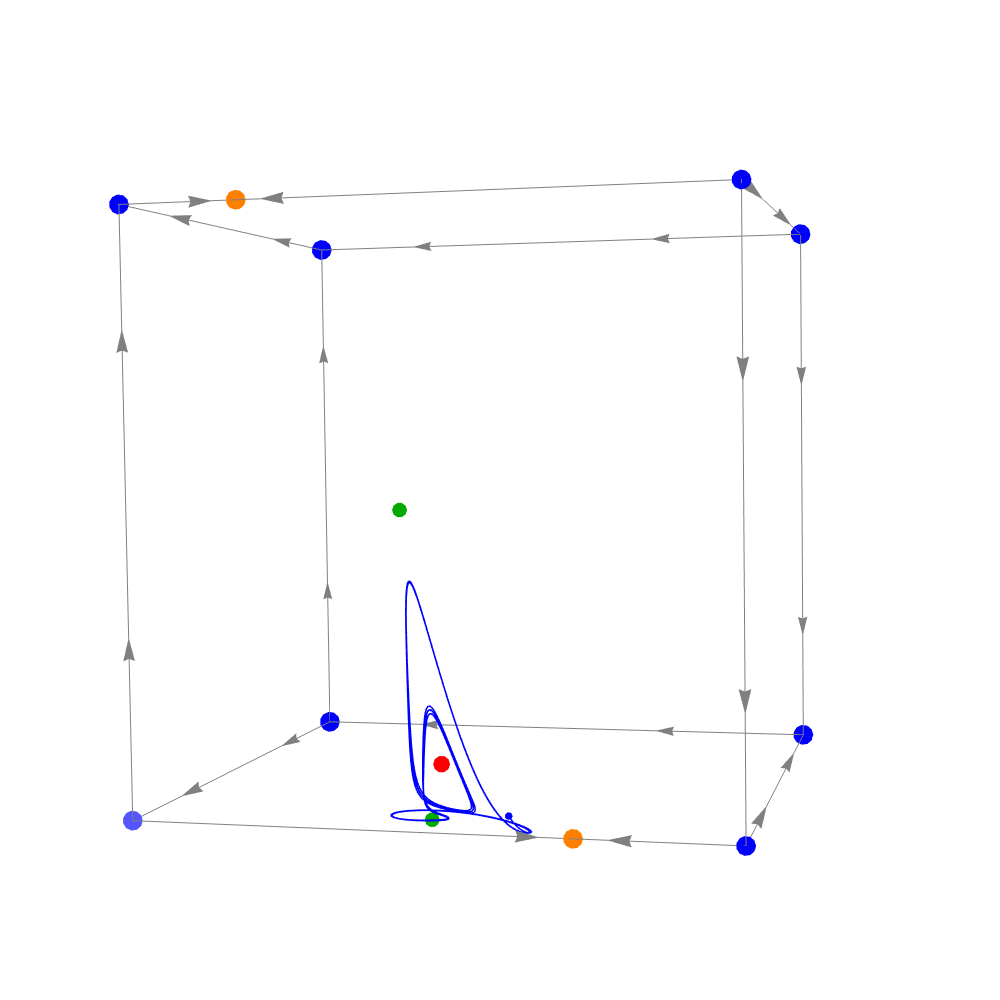} & \includegraphics[width=4cm]{./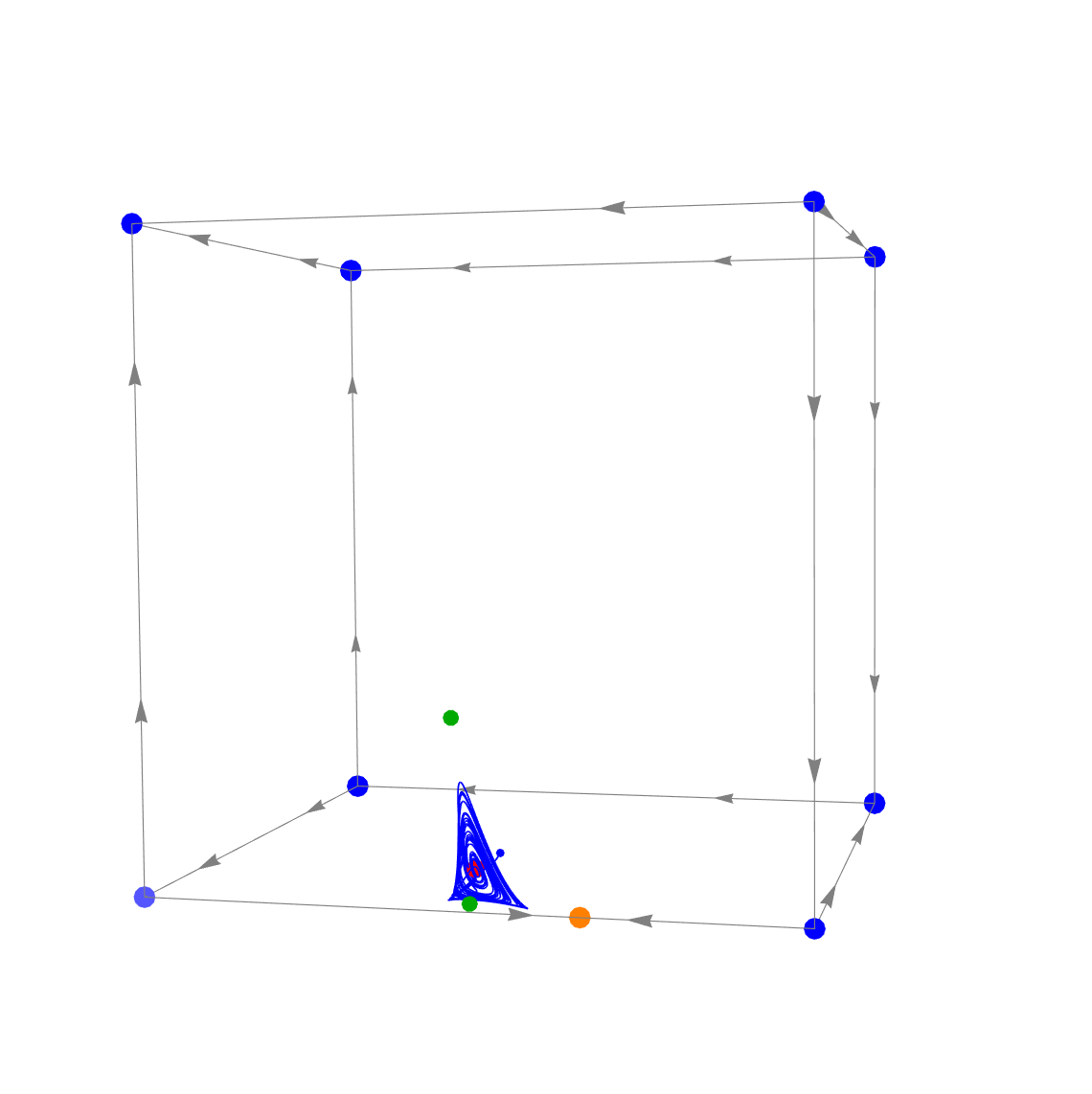} \\[-4mm]
\small{$\boldsymbol{\mu}=3.6$} & \small{$\boldsymbol{\mu}=6.5$} & \small{$\boldsymbol{\mu}=8$} \\ \bottomrule
\end{tabular}
	\vspace{.3cm}
        \captionof{table}{\small{Illustration of the dynamics on the cube's interior for different values of $\boldsymbol{\mu}$.}}
        \label{tbl:Int_dynamics_on_mu}
\end{table}

\end{document}